\newcommand{\N}{\mathbb{N}}
\newcommand{\Z}{\mathbb{Z}}
\newcommand{\Zn}[1]{\Z/{#1}\Z}
\renewcommand{\ge}{\geqslant}
\renewcommand{\le}{\leqslant}
\newcommand{\ST}{\nabla}
\newcommand{\STset}[1]{\mathcal{ST}_{#1}} 
\newcommand{\PT}{\Delta}
\newcommand{\PTset}[1]{\mathcal{PT}_{#1}} 
\newcommand{\orb}[1]{\mathcal{O}_{#1}}
\newcommand{\PO}[1]{\mathcal{PO}_{#1}}
\newcommand{\BPO}[1]{\mathcal{BPO}_{#1}}
\newcommand{\resPO}[1]{\res{\PO{#1}}}
\newcommand{\BresPO}[1]{\res{\BPO{#1}}}
\newcommand{\remaind}[1]{\mathcal{R}_{#1}}
\newcommand{\m}{\mathfrak{m}}
\newcommand{\res}[1]{\overline{#1}}
\newcommand{\ls}{\mathsf{l}}
\newcommand{\rs}{\mathsf{r}}
\newcommand{\is}{\mathsf{i}}
\newcommand{\ts}[2]{\mathsf{t}_{#1,#2}}
\newcommand{\ord}[2]{\mathrm{ord_{#1}\left(#2\right)}}
\newcommand{\clr}{cyan}
\newcommand{\clrF}{blue}
\newcommand{\clrd}{magenta}
\newcommand{\clrdF}{red}
\newcommand{\clrt}{yellow}
\newcommand{\side}{1}
\newcounter{i}
\newcounter{j}
\newcommand{\element}[3]{
	\pgfmathparse{#1}\let\a\pgfmathresult
	\pgfmathparse{#1+\side}\let\b\pgfmathresult
	\pgfmathparse{#2-\side}\let\c\pgfmathresult
	\pgfmathparse{#2}\let\d\pgfmathresult
	\pgfmathparse{#1+\side/2}\let\e\pgfmathresult
	\pgfmathparse{#2-\side/2}\let\f\pgfmathresult
	\draw (\a,\c) -- (\a,\d) -- (\b,\d) -- (\b,\c) -- (\a,\c);
	\draw (\e,\f) node {#3};
}
\newcommand{\celement}[3]{
	\pgfmathparse{#1}\let\a\pgfmathresult
	\pgfmathparse{#1+\side}\let\b\pgfmathresult
	\pgfmathparse{#2-\side}\let\c\pgfmathresult
	\pgfmathparse{#2}\let\d\pgfmathresult
	\pgfmathparse{#1+\side/2}\let\e\pgfmathresult
	\pgfmathparse{#2-\side/2}\let\f\pgfmathresult
	\draw[fill=#3] (\a,\c) -- (\a,\d) -- (\b,\d) -- (\b,\c) -- (\a,\c);
}
\newcommand{\ligne}[3]{
	\setcounter{j}{0}
	\foreach \x in #3 {
		\pgfmathparse{#1+\value{j}*\side}\let\la\pgfmathresult
		\element{\la}{#2}{\x}
		\addtocounter{j}{1}
	}
}
\newcommand{\cligne}[3]{
	\setcounter{j}{0}
	\foreach \x in #3 {
		\pgfmathparse{#1+\value{j}*\side}\let\la\pgfmathresult
		\celement{\la}{#2}{\x}
		\addtocounter{j}{1}
	}
}
\newcommand{\grille}[3]{
	\setcounter{i}{0}
	\foreach \x in #3 {
		\pgfmathparse{#2-\value{i}*\side}\let\ca\pgfmathresult
		\ligne{#1}{\ca}{\x}
		\addtocounter{i}{1}
	}	
}
\newcommand{\cgrille}[3]{
	\setcounter{i}{0}
	\foreach \x in #3 {
		\pgfmathparse{#2-\value{i}*\side}\let\ca\pgfmathresult
		\cligne{#1}{\ca}{\x}
		\addtocounter{i}{1}
	}	
}
\newcommand{\carre}[3]{
	\draw (#1,#2) -- (#1+1,#2) -- (#1+1,#2-1) -- (#1,#2-1) -- (#1,#2);
	\draw[fill=black] (#1+#3,#2-#3) -- (#1+1-#3,#2-#3) -- (#1+1-#3,#2-1+#3) -- (#1+#3,#2-1+#3) -- (#1+#3,#2-#3);
}
\newcommand{\carreL}[4]{
	\setcounter{i}{0}
	\foreach \x in #4{
		\ifthenelse{\equal{\x}{0}}{\carre{#1+\value{i}}{#2}{0.5}}{\carre{#1+\value{i}}{#2}{#3}}
		\addtocounter{i}{1}
	}
}
\theoremstyle{plain}
\newtheorem{thm}{Theorem}[section]
\newtheorem{lem}[thm]{Lemma}
\newtheorem{prop}[thm]{Proposition}
\theoremstyle{definition}
\newtheorem{prob}{Problem}
\newtheorem*{SteinProb}{Steinhaus Problem}
\newtheorem*{MollProb}{Molluzzo Problem}
\theoremstyle{remark}
\author{Jonathan Chappelon}
\title{Periodic balanced binary triangles}
\affiliation{
  IMAG, Univ Montpellier, CNRS, Montpellier, France}
\keywords{binary triangles, Steinhaus triangles, generalized Pascal triangles, balanced triangles, Steinhaus Problem, periodic triangles, periodic orbits}
\begin{document}
\maketitle
\begin{abstract}
A binary triangle of size $n$ is a triangle of zeroes and ones, with $n$ rows, built with the same local rule as the standard Pascal triangle modulo $2$. A binary triangle is said to be balanced if the absolute difference between the numbers of zeroes and ones that constitute this triangle is at most $1$. In this paper, the existence of balanced binary triangles of size $n$, for all positive integers $n$, is shown. This is achieved by considering periodic balanced binary triangles, that are balanced binary triangles where each row, column or diagonal is a periodic sequence.
\end{abstract}

\section{Introduction}\label{sec:1}

The {\em Steinhaus triangle} $\ST{S}$ associated with the finite sequence $S=(a_0,a_1,\ldots,a_{n-1})$, of length $n\ge1$ in $\Zn{2}$, is the triangle generated from $S$ by the same local rule that defines the standard Pascal triangle modulo $2$, that is the doubly indexed sequence $\ST{S}=\left(a_{i,j}\right)_{0\le i\le j\le n-1}$ defined by:
\begin{enumerate}[i)]
\item
$a_{0,j} = a_{j}$, for all $0\le j\le n-1$,
\item\label{Prule}
$a_{i,j} = a_{i-1,j-1} + a_{i-1,j}$, for all $1\le i\le j\le n-1$.
\end{enumerate}
Note that the sum in \ref{Prule}) is the sum modulo $2$. The $m$-th {\em row}, {\em column} and {\em diagonal} of the Steinhaus triangle $\ST{S}$ are the sequences $(a_{m,j})_{m\le j\le n-1}$, $(a_{i,m})_{0\le i\le m}$ and $(a_{i,m+i})_{0\le i\le n-1-m}$, respectively, for all $m\in\{0,1,\ldots,n-1\}$. This kind of binary triangle was introduced in \citet{Steinhaus:1964aa}. For example, the Steinhaus triangle $\ST{S}$ associated with $S=0010100$ is depicted in Figure~\ref{fig1}.

The {\em generalized Pascal triangle} $\PT{(S_l,S_r)}$ associated with the finite sequences $S_l=(a_0,a_1,\ldots,a_{n-1})$ and $S_r=(b_0,b_1,\ldots,b_{n-1})$, of length $n\ge1$ in $\Zn{2}$ and with $a_0=b_0$, is the doubly indexed sequence $\PT{(S_l,S_r)}=\left(a_{i,j}\right)_{0\le j \le i \le n-1}$ defined by:
\begin{enumerate}[i)]
\item
$a_{i,0} = a_{i}$ and $a_{i,i} = b_{i}$, for all $0\le i\le n-1$,
\item
$a_{i,j} = a_{i-1,j-1} + a_{i-1,j}$, for all $1\le j < i\le n-1$.
\end{enumerate}
The $m$-th {\em row}, {\em column} and {\em diagonal} of the generalized Pascal triangle $\PT{(S_l,S_r)}$ are the sequences $(a_{m,j})_{0\le j\le m}$, $(a_{i,m})_{m\le i\le n-1}$ and $(a_{m+j,j})_{0\le j\le n-1-m}$, respectively, for all $m\in\{0,1,\ldots,n-1\}$. For example, the generalized Pascal triangle $\PT(S_l,S_r)$ associated with $S_l=0000101$ and $S_r=0100001$ is depicted in Figure~\ref{fig1}. Moreover, note that, for the constant binary sequences $S_l=S_r=11\cdots1$ of size $n$, the triangle $\PT(S_l,S_r)$ corresponds to the first $n$ rows of the standard Pascal triangle modulo $2$, the Sierpinski triangle.

\begin{figure}[htbp]
\begin{center}
\begin{tabular}{cc}
\begin{tikzpicture}[scale=0.25]
\pgfmathparse{sqrt(3)}\let\x\pgfmathresult

\node at (0,0) {${\bf \color{blue}{0}}$};
\node at (2,0) {${\bf \color{blue}{0}}$};
\node at (4,0) {${\bf \color{blue}{1}}$};
\node at (6,0) {${\bf \color{blue}{0}}$};
\node at (8,0) {${\bf \color{blue}{1}}$};
\node at (10,0) {${\bf \color{blue}{0}}$};
\node at (12,0) {${\bf \color{blue}{0}}$};

\node at (1,-\x) {$0$};
\node at (3,-\x) {$1$};
\node at (5,-\x) {$1$};
\node at (7,-\x) {$1$};
\node at (9,-\x) {$1$};
\node at (11,-\x) {$0$};

\node at (2,-2*\x) {$1$};
\node at (4,-2*\x) {$0$};
\node at (6,-2*\x) {$0$};
\node at (8,-2*\x) {$0$};
\node at (10,-2*\x) {$1$};

\node at (3,-3*\x) {$1$};
\node at (5,-3*\x) {$0$};
\node at (7,-3*\x) {$0$};
\node at (9,-3*\x) {$1$};

\node at (4,-4*\x) {$1$};
\node at (6,-4*\x) {$0$};
\node at (8,-4*\x) {$1$};

\node at (5,-5*\x) {$1$};
\node at (7,-5*\x) {$1$};

\node at (6,-6*\x) {$0$};

\draw (-1.5,0.5*\x) -- (13.5,0.5*\x) -- (6,-7*\x) -- (-1.5,0.5*\x);
\end{tikzpicture}
&
\begin{tikzpicture}[scale=0.25]
\pgfmathparse{sqrt(3)}\let\x\pgfmathresult

\node at (0,0) {${\bf \color{blue}{0}}$};

\node at (-1,-\x) {${\bf \color{blue}{0}}$};
\node at (1,-\x) {${\bf \color{blue}{1}}$};

\node at (-2,-2*\x) {${\bf \color{blue}{0}}$};
\node at (0,-2*\x) {$1$};
\node at (2,-2*\x) {${\bf \color{blue}{0}}$};

\node at (-3,-3*\x) {${\bf \color{blue}{0}}$};
\node at (-1,-3*\x) {$1$};
\node at (1,-3*\x) {$1$};
\node at (3,-3*\x) {${\bf \color{blue}{0}}$};

\node at (-4,-4*\x) {${\bf \color{blue}{1}}$};
\node at (-2,-4*\x) {$1$};
\node at (0,-4*\x) {$0$};
\node at (2,-4*\x) {$1$};
\node at (4,-4*\x) {${\bf \color{blue}{0}}$};

\node at (-5,-5*\x) {${\bf \color{blue}{0}}$};
\node at (-3,-5*\x) {$0$};
\node at (-1,-5*\x) {$1$};
\node at (1,-5*\x) {$1$};
\node at (3,-5*\x) {$1$};
\node at (5,-5*\x) {${\bf \color{blue}{0}}$};

\node at (-6,-6*\x) {${\bf \color{blue}{1}}$};
\node at (-4,-6*\x) {$0$};
\node at (-2,-6*\x) {$1$};
\node at (0,-6*\x) {$0$};
\node at (2,-6*\x) {$0$};
\node at (4,-6*\x) {$1$};
\node at (6,-6*\x) {${\bf \color{blue}{1}}$};

\draw (0,\x) -- (-7.5,-6.5*\x) -- (7.5,-6.5*\x) -- (0,\x);
\end{tikzpicture} \\[1.25ex]
$\ST(0010100)$ & $\PT(0000101,0100001)$ \\
\end{tabular}
\caption{Binary triangles}\label{fig1}
\end{center}
\end{figure}

In this paper, a {\em binary triangle} is either a Steinhaus triangle or a generalized Pascal triangle. The {\em size} of a binary triangle is the number of rows that constitute this triangle.

For any binary triangle $T$, let $\m_T$ denote its multiplicity function, that is, the function $\m_T : \Zn{2} \longrightarrow \N$ that assigns to each element $x\in\Zn{2}$ its multiplicity in $T$. The triangle $T$ is said to be {\em balanced} if its multiplicity function is constant or almost constant, i.e., if the multiplicity difference
$$
\delta\m_T:=|\m_T(1)-\m_T(0)|
$$
is such that $\delta\m_T\in\{0,1\}$. Since they contain $14$ zeroes and $14$ ones, the triangles depicted in Figure~\ref{fig1} are balanced binary triangles of size $7$.

The goal of this paper is to prove that there exist balanced binary triangles of size $n$, for all positive integers $n$ and for the both kinds of binary triangles. This completely solves a generalization of a problem posed in \citet{Steinhaus:1964aa}.

\begin{SteinProb}
Does there exist, for any positive integer $n\equiv 0$ or $3\bmod{4}$, a binary sequence $S$ of length $n$ for which the associated triangle $\ST{S}$ contains as many zeroes as ones?
\end{SteinProb}

Since a binary triangle of size $n$ contains ${{n+1}\choose{2}}$ elements, the condition $n\equiv 0$ or $3\bmod{4}$ is a necessary and sufficient condition for having a triangle of size $n$ containing an even number of terms.

The Steinhaus Problem was solved for the first time in \citet{Harborth:1972aa}. In his paper, Harborth constructively showed that, for every positive integer $n\equiv 0$ or $3\bmod{4}$, there exist at least four binary sequences $S$ of length $n$ such that $\ST S$ is balanced. Since then, many solutions have appeared \citep*{Eliahou:2004aa,Eliahou:2005aa,Eliahou:2007aa}. All of them are constructive and correspond to the search of sequences generating balanced triangles, that have some additional properties such as being antisymmetric or zero-sum.

The possible number of ones in binary triangles was explored in \citet{Chang:1983aa,Harborth:2005aa}. The minimum number of ones is obviously $0$ since the triangle of zeroes of size $n$ is always a binary triangle. The maximum number of ones in a Steinhaus triangle of size $n$ is $\left\lceil\frac{2}{3}{n+1\choose 2}\right\rceil$. As shown in \citet{Chang:1983aa,Harborth:1972aa}, this maximum number of ones is obtained, for instance, for the Steinhaus triangle associated with the initial segment of length $n$ of the $3$-periodic sequence $110110110\cdots$ for all positive integer $n$. In \citet{Harborth:2005aa}, it was proved that the maximum number of ones in a generalized Pascal triangle of size $n$ is $\left\lceil\frac{2}{3}{n+1\choose 2}\right\rceil + \varepsilon$, where
$$
\varepsilon = \left\{\begin{array}{ll}
2 & \text{if } n\equiv1\bmod{3}, n\neq1, \\
1 & \text{if } n\equiv0,2\bmod{3}, n\neq8, \\
0 & \text{if } n=1,\\
3 & \text{if } n=8.
\end{array}\right.
$$
The following result gives the average number of ones and zeroes in binary triangles.

\begin{prop}
The average number of ones and zeroes in a binary triangle of size $n$ is exactly $\frac{1}{2}{n+1\choose 2}$.
\end{prop}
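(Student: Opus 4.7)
My plan is to show that every single entry of a uniformly random binary triangle of size $n$ takes each value $0$ or $1$ with probability $1/2$; then linearity of expectation immediately gives the result, since the total number of entries in the triangle is $\binom{n+1}{2}$.

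For the Steinhaus case, the map $S\longmapsto\ST{S}$ from $(\Zn{2})^n$ to triangles of size $n$ is $\Zn{2}$-linear. A routine induction on $i$ using the local rule yields the explicit expression
\[
a_{i,j} \;=\; \sum_{k=0}^{i}\binom{i}{k}\,a_{j-i+k}\pmod 2
\]
valid for all $0\leq i\leq j\leq n-1$. Since the coefficient of $a_j$ is $\binom{i}{i}=1$, this expresses $a_{i,j}$ as a non-trivial $\Zn{2}$-linear form in the $n$ free parameters $a_0,\ldots,a_{n-1}$. Any such form vanishes on exactly $2^{n-1}$ inputs and equals $1$ on the remaining $2^{n-1}$, so averaging over all $2^n$ sequences gives the value $1/2$ at every position.

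For generalized Pascal triangles the parameter space $\{(S_l,S_r)\in(\Zn{2})^n\times(\Zn{2})^n : a_0=b_0\}$ has $2^{2n-1}$ elements, and the map $(S_l,S_r)\longmapsto\PT{(S_l,S_r)}$ is $\Zn{2}$-linear as well. The boundary entries $a_{i,0}=a_i$ and $a_{i,i}=b_i$ are themselves free parameters, hence non-trivial linear forms. For an interior entry $(i,j)$ with $0<j<i$, I would certify non-triviality by exhibiting a single input for which $a_{i,j}=1$: take $S_l$ with a single $1$ at position $i-j$ (which is $\geq 1$) and $S_r=0$. A short induction on the row then shows that every row of index strictly less than $i-j$ vanishes, that row $i-j$ equals $(1,0,\ldots,0)$, and that for $s\geq 1$ row $i-j+s$ is supported on positions $k\in\{1,\ldots,s\}$ with value $\binom{s-1}{k-1}\bmod 2$; taking $s=j$ yields $a_{i,j}=\binom{j-1}{j-1}=1$. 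The balance argument from the Steinhaus case then again gives average $1/2$ at every position.

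The main obstacle is this non-triviality check for interior entries of generalized Pascal triangles: the natural lattice-path expansion hits the left and right boundaries asymmetrically and does not produce a closed formula as transparent as the Steinhaus one, so I rely on the specific single-parameter input above to certify $a_{i,j}\not\equiv 0$. Once both cases are in hand, summing the per-entry average $1/2$ over all $\binom{n+1}{2}$ positions gives $\tfrac12\binom{n+1}{2}$ for the average of $\m_T(1)$, and the identical value for $\m_T(0)$ follows by complementation.
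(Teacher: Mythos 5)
Your proof is correct, but it takes a genuinely different route from the paper. The paper argues by induction on the size $n$: each Steinhaus triangle of size $n-1$ is the subtriangle of exactly $2$ triangles of size $n$ (obtained by prepending a first row), and each generalized Pascal triangle of size $n-2$ sits inside exactly $2^4$ triangles of size $n$ (obtained by adding the two sides), so the total count of each symbol over the whole family satisfies a recursion that yields the average $\frac{1}{2}{n+1\choose 2}$. You instead prove the stronger, pointwise statement that every individual entry of a uniformly random triangle is a \emph{non-trivial} $\Zn{2}$-linear form in the free parameters, hence equidistributed, and conclude by linearity of expectation. Your key computations check out: the closed form $a_{i,j}=\sum_{k=0}^{i}{i\choose k}a_{j-i+k}$ is exactly the paper's Lemma~2.4 reindexed, with leading coefficient ${i\choose i}=1$; and your single-impulse certificate for interior entries of a generalized Pascal triangle (a lone $1$ at position $i-j\ge1$ of $S_l$, with $S_r=0$) correctly propagates as a shifted Pascal row, giving $a_{i,j}={j-1\choose j-1}=1$ --- the positions $1,\ldots,s$ of row $i-j+s$ are indeed never boundary positions since $i>j$. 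What your approach buys is a sharper conclusion (each cell is individually unbiased, not just the aggregate) and independence from an induction on $n$; its only cost is the ad hoc non-degeneracy certificate in the Pascal case, which could alternatively be obtained from the isomorphism $\PTset{n}\cong\STset{2n-1}$ that the paper notes in Figure~\ref{fig11}, reducing the interior-entry check to the Steinhaus formula.
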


\begin{proof}
We induct on $n$.

First, for the Steinhaus triangles, the result is trivial for $n=1$. Suppose now that $n\ge 2$ and that the result is true for any Steinhaus triangle of size $n-1$. Let $\ST{S}$ be the Steinhaus triangle of size $n-1$ generated from the sequence $S=(a_0,a_1,\ldots,a_{n-2})$. There exist exactly two sequences $S'$ of length $n$ such that we retrieve $\ST{S}$ as the subtriangle $\ST{S'}\setminus S'$, that is, the last $n-1$ rows of the Steinhaus triangle $\ST{S'}$ of size $n$. These sequences $S'$ are of the form
$$
S' = \left( x , x+a_0 , x+a_0+a_1 , \ldots , x+\sum_{j=0}^{i-1}a_j , \ldots , x+\sum_{j=0}^{n-2}a_j \right),
$$
with $x\in\Zn{2}$. Moreover, for all positive integers $m$, it is clear that there are $2^m$ binary sequences of length $m$ and the same number of Steinhaus triangles of size $m$. It follows that, for all $x\in\Zn{2}$, the total number of $x$ in the set of all the Steinhaus triangles of size $n$ is the sum of twice the total number of $x$ in the set of Steinhaus triangles of size $n-1$ and the total number of $x$ in the set of sequences of length $n$. This leads to the result that the average number of $x$ in a Steinhaus triangle of size $n$ is
$$
\frac{1}{2^n} \left( 2\times 2^{n-1}\times \frac{1}{2}{n\choose 2} + 2^{n-1}n \right) = \frac{1}{2}{{n+1}\choose 2},
$$
for all $x\in\Zn{2}$.

Now, for the generalized Pascal triangles. For $n=1$ and $n=2$, the result is clear. Suppose now that $n\ge 3$ and that the result is true for any generalized Pascal triangle of size $n-2$. Let $\PT(S_l,S_r)$ be the generalized Pascal triangle of size $n-2$ generated from the sequences $S_l=(a_0,a_1,\ldots,a_{n-3})$ and $S_r=(b_0,b_1,\ldots,b_{n-3})$, with $a_0=b_0$. There exist exactly $2^4$ couples $(S_l',S_r')$ of sequences of length $n$ such that we retrieve $\PT(S_l,S_r)$ as the subtriangle $\PT(S_l',S_r')\setminus (S_l'\cup S_r'\setminus\{a_0\})$, that is, the generalized Pascal triangle obtained from $\PT(S_l',S_r')$ by removing the left and right sides of the triangle. These couples of sequences $(S_l',S_r')$ are of the form
$$
\begin{array}{l}
S_l' = \left( x_1 , x_2 , a_0+a_1 , a_1+a_2 , \ldots , a_{n-4}+a_{n-3} , x_3 \right), \\[1.5ex]
S_r' = \left( x_1 , x_2+a_0 , b_0+b_1 , b_1+b_2 , \ldots , b_{n-4}+b_{n-3} , x_4 \right), \\
\end{array}
$$
where $x_1,x_2,x_3,x_4\in\Zn{2}$. Moreover, for all positive integers $m$, it is clear that there are $2^{2m-1}$ binary sequences of length $2m-1$ and the same number of generalized Pascal triangles of size $m$. It follows that, for all $x\in\Zn{2}$, the total number of $x$ in the set of all the generalized Pascal triangles of size $n$ is the sum of $2^4$ times the total number of $x$ in the set of generalized Pascal triangles of size $n-2$ and the total number of $x$ in the set of sequences of length $2n-1$. This leads to the result that the average number of $x$ in a generalized Pascal triangle of size $n$ is
$$
\frac{1}{2^{2n-1}} \left( 2^4\times 2^{2(n-2)-1}\times \frac{1}{2}{n-1\choose 2} + 2^{2n-2}(2n-1) \right) = \frac{1}{2}{{n+1}\choose 2},
$$
for all $x\in\Zn{2}$. This completes the proof.
\end{proof}

This result shows that the Steinhaus Problem and the following generalization are {\em natural}.

\begin{prob}\label{prob1}
Does there exist, for any positive integer $n$, a balanced Steinhaus triangle and a balanced generalized Pascal triangle of size $n$?
\end{prob}

As already announced, this problem is solved in the positive in this paper. The solution presented here is constructive and based on the analysis of periodic balanced binary triangles, that are balanced binary triangles where each row or column is a periodic sequence.

Let us begin with some definitions and terminology. Let $S=\left(a_{j}\right)_{j\in\Z}$ be a doubly infinite sequence of $\Zn{2}$. The {\em derived sequence} $\partial S$ is the sequence obtained by pairwise adding consecutive terms of $S$, that is,
$$
\partial S = \left( a_{j-1}+a_{j} \right)_{j\in\Z}.
$$
This derivation process can be iterated and, for every positive integer $i$, the $i$-th derived sequence $\partial^{i}S$ is recursively defined by $\partial^{i}=\partial\left(\partial^{i-1}S\right)$ with $\partial^{0}S = S$. The {\em orbit} $\orb{S}$ is the sequence of all the iterated derived sequences of $S$, that is,
$$
\orb{S} = \left(\partial^{i}S\right)_{i\in\N}.
$$
The orbit of $S$ can also be seen as the doubly indexed sequence $\orb{S}=\left(a_{i,j}\right)_{(i,j)\in\N\times\Z}$ defined by
\begin{enumerate}[i)]
\item
$a_{0,j} = a_{j}$, for all $j\in\Z$, and
\item
$a_{i,j} = a_{i-1,j-1} + a_{i-1,j}$, for all $i\ge 1$ and for all $j\in\Z$.
\end{enumerate}
An example of orbit $\orb{S}$ associated with the sequence
$$
S=\left(\ldots\ldots,0,0,1,0,1,0,1,1,0,0,0,0,1,1,0,0,0,0,1,0,0,1,1,1,0,\ldots\ldots\right)
$$
is depicted in Figure~\ref{fig2}.

\begin{figure}[htbp]
\begin{center}
\begin{tikzpicture}[scale=0.45]
\draw[fill=\clr] (4,-4) -- (4,-5) -- (5,-5) -- (5,-6) -- (6,-6) -- (6,-7) -- (7,-7) -- (7,-8) -- (8,-8) -- (8,-9) -- (9,-9) -- (9,-10) -- (10,-10) -- (10,-4) -- (4,-4);
\draw[fill=\clr] (13,-4) -- (14,-4) -- (14,-5) -- (15,-5) -- (15,-6) -- (16,-6) -- (16,-7) -- (17,-7) -- (17,-8) -- (18,-8) -- (18,-9) -- (19,-9) -- (19,-10) -- (13,-10) -- (13,-4);
\grille{-\side}{0}{{{0,0, 1, 0, 1, 0, 1, 1, 0, 0, 0, 0, 1, 1, 0, 0, 0, 0, 1, 0, 0, 1, 1, 1, 0},
 {1,0, 1, 1, 1, 1, 1, 0, 1, 0, 0, 0, 1, 0, 1, 0, 0, 0, 1, 1, 0, 1, 0, 0, 1},
 {1,1, 1, 0, 0, 0, 0, 1, 1, 1, 0, 0, 1, 1, 1, 1, 0, 0, 1, 0, 1, 1, 1, 0, 1},
 {1,0, 0, 1, 0, 0, 0, 1, 0, 0, 1, 0, 1, 0, 0, 0, 1, 0, 1, 1, 1, 0, 0, 1, 1},
 {0,1, 0, 1, 1, 0, 0, 1, 1, 0, 1, 1, 1, 1, 0, 0, 1, 1, 1, 0, 0, 1, 0, 1, 0},
 {1,1, 1, 1, 0, 1, 0, 1, 0, 1, 1, 0, 0, 0, 1, 0, 1, 0, 0, 1, 0, 1, 1, 1, 1},
 {0,0, 0, 0, 1, 1, 1, 1, 1, 1, 0, 1, 0, 0, 1, 1, 1, 1, 0, 1, 1, 1, 0, 0, 0},
 {0,0, 0, 0, 1, 0, 0, 0, 0, 0, 1, 1, 1, 0, 1, 0, 0, 0, 1, 1, 0, 0, 1, 0, 0},
 {0,0, 0, 0, 1, 1, 0, 0, 0, 0, 1, 0, 0, 1, 1, 1, 0, 0, 1, 0, 1, 0, 1, 1, 0},
 {1,0, 0, 0, 1, 0, 1, 0, 0, 0, 1, 1, 0, 1, 0, 0, 1, 0, 1, 1, 1, 1, 1, 0, 1},
 {1,1, 0, 0, 1, 1, 1, 1, 0, 0, 1, 0, 1, 1, 1, 0, 1, 1, 1, 0, 0, 0, 0, 1, 1},
 {0,0, 1, 0, 1, 0, 0, 0, 1, 0, 1, 1, 1, 0, 0, 1, 1, 0, 0, 1, 0, 0, 0, 1, 0},
 {1,0, 1, 1, 1, 1, 0, 0, 1, 1, 1, 0, 0, 1, 0, 1, 0, 1, 0, 1, 1, 0, 0, 1, 1},
 {0,1, 1, 0, 0, 0, 1, 0, 1, 0, 0, 1, 0, 1, 1, 1, 1, 1, 1, 1, 0, 1, 0, 1, 0}}}
\end{tikzpicture}
\caption{Binary triangles appearing in an orbit $\orb{S}$}\label{fig2}
\end{center}
\end{figure}

Binary triangles can then been considered as appearing in orbits of binary sequences. Let $\ST{S}(i_0,j_0,n)$ denote the triangle build from the base to the top, whose {\em principal vertex} is at the position $(i_0,j_0)\in\N\times\Z$ in the orbit $\orb{S}=\left(a_{i,j}\right)_{(i,j)\in\N\times\Z}$ and of {\em size} $n$, i.e., the Steinhaus triangle
$$
\ST{S}(i_0,j_0,n) = \ST{(a_{i_0,j_0},a_{i_0,j_0+1},\ldots,a_{i_0,j_0+n-1})} = \left( a_{i_0+i,j_0+j} \right)_{0\le i\le j\le n-1}.
$$
Let $\PT{S}(i_0,j_0,n)$ denote the triangle build from the top to the base, whose {\em principal vertex} is at the position $(i_0,j_0)\in\N\times\Z$ in the orbit $\orb{S}$ and of {\em size} $n$, i.e., the generalized Pascal triangle
$$
\PT{S}(i_0,j_0,n) \begin{array}[t]{l}
 = \displaystyle\PT((a_{i_0,j_0},a_{i_0+1,j_0},\ldots,a_{i_0+n-1,j_0}),(a_{i_0,j_0},a_{i_0+1,j_0+1},\ldots,a_{i_0+n-1,j_0+n-1})) \\[1.5ex]
 =  \displaystyle\left( a_{i_0+i,j_0+j} \right)_{0\le j\le i\le n-1}. \\
\end{array}
$$
Example of triangles appearing in an orbit $\orb{S}$ is represented in Figure~\ref{fig2}.

The sets of all the Steinhaus triangles of size $n$ and of all the generalized Pascal triangles of size $n$ are denoted by $\STset{n}$ and $\PTset{n}$, respectively. It is clear that these sets are $\Zn{2}$-vector spaces of dimension $n$ and $2n-1$, respectively. Moreover, as depicted in Figure~\ref{fig11}, there exists an obvious isomorphism between $\PTset{n}$ and $\STset{2n-1}$ since a generalized Pascal triangle of size $n$ can be seen as the center of a Steinhaus triangle of size $2n-1$.

\begin{figure}[htbp]
\begin{center}
\begin{tabular}{ccc}
\begin{tikzpicture}[scale=0.25]
\pgfmathparse{sqrt(3)}\let\x\pgfmathresult

\node at (0,0) {$0$};
\node at (2,0) {$0$};
\node at (4,0) {$1$};
\node at (6,0) {$0$};
\node at (8,0) {$0$};
\node at (10,0) {$1$};
\node at (12,0) {$1$};

\node at (1,-\x) {$0$};
\node at (3,-\x) {$1$};
\node at (5,-\x) {$1$};
\node at (7,-\x) {$0$};
\node at (9,-\x) {$1$};
\node at (11,-\x) {$0$};

\node at (2,-2*\x) {$1$};
\node at (4,-2*\x) {$0$};
\node at (6,-2*\x) {$1$};
\node at (8,-2*\x) {$1$};
\node at (10,-2*\x) {$1$};

\node at (3,-3*\x) {$1$};
\node at (5,-3*\x) {$1$};
\node at (7,-3*\x) {$0$};
\node at (9,-3*\x) {$0$};

\node at (4,-4*\x) {$0$};
\node at (6,-4*\x) {$1$};
\node at (8,-4*\x) {$0$};

\node at (5,-5*\x) {$1$};
\node at (7,-5*\x) {$1$};

\node at (6,-6*\x) {$0$};

\draw (-1.5,0.5*\x) -- (13.5,0.5*\x) -- (6,-7*\x) -- (-1.5,0.5*\x);
\draw (6,\x) -- (1.5,-3.5*\x) -- (10.5,-3.5*\x) -- (6,\x);
\end{tikzpicture}
& &
\begin{tikzpicture}[scale=0.45]
\draw[fill=\clr] (3,0) -- (4,0) -- (4,-1) -- (5,-1) -- (5,-2) -- (6,-2) -- (6,-3) -- (7,-3) -- (7,-4) -- (3,-4) -- (3,0);
\draw (0,0) -- (0,-1);
\draw (1,0) -- (1,-2);
\draw (2,0) -- (2,-3);
\draw (3,0) -- (3,-4);
\draw (4,0) -- (4,-5);
\draw (5,0) -- (5,-6);
\draw (6,0) -- (6,-7);
\draw (7,0) -- (7,-7);
\draw (0,0) -- (7,0);
\draw (0,-1) -- (7,-1);
\draw (1,-2) -- (7,-2);
\draw (2,-3) -- (7,-3);
\draw (3,-4) -- (7,-4);
\draw (4,-5) -- (7,-5);
\draw (5,-6) -- (7,-6);
\draw (6,-7) -- (7,-7);

\node at (0.5,-0.5) {$0$};
\node at (1.5,-0.5) {$0$};
\node at (2.5,-0.5) {$1$};
\node at (3.5,-0.5) {$0$};
\node at (4.5,-0.5) {$0$};
\node at (5.5,-0.5) {$1$};
\node at (6.5,-0.5) {$1$};

\node at (1.5,-1.5) {$0$};
\node at (2.5,-1.5) {$1$};
\node at (3.5,-1.5) {$1$};
\node at (4.5,-1.5) {$0$};
\node at (5.5,-1.5) {$1$};
\node at (6.5,-1.5) {$0$};

\node at (2.5,-2.5) {$1$};
\node at (3.5,-2.5) {$0$};
\node at (4.5,-2.5) {$1$};
\node at (5.5,-2.5) {$1$};
\node at (6.5,-2.5) {$1$};

\node at (3.5,-3.5) {$1$};
\node at (4.5,-3.5) {$1$};
\node at (5.5,-3.5) {$0$};
\node at (6.5,-3.5) {$0$};

\node at (4.5,-4.5) {$0$};
\node at (5.5,-4.5) {$1$};
\node at (6.5,-4.5) {$0$};

\node at (5.5,-5.5) {$1$};
\node at (6.5,-5.5) {$1$};

\node at (6.5,-6.5) {$0$};
\end{tikzpicture}
\end{tabular}
\caption{Isomorphism between $\PTset{n}$ and $\STset{2n-1}$}\label{fig11}
\end{center}
\end{figure}

A binary triangle of size $n$ is constituted by ${{n+1}\choose 2}$ elements, the $n$-th triangular number. Therefore, a binary triangle of size $n$ contains an even number of terms for $n\equiv0,3\bmod{4}$ and an odd number of terms for $n\equiv1,2\bmod{4}$. It follows that a binary triangle $T$ of size $n$ is balanced if and only if
$$
\delta\m_T = \left\{\begin{array}{ll}
 0 & \text{for }n\equiv0,3\bmod{4}, \\
 1 & \text{for }n\equiv1,2\bmod{4}.
\end{array}\right.
$$
In other words, a binary triangle $T$ of size $n$ is balanced if and only if either $\m_T(0)=\m_T(1)$, for $n\equiv0,3\bmod{4}$, or $\m_T(0)=\m_T(1)\pm1$, for $n\equiv1,2\bmod{4}$. Then, the Steinhaus Problem is solved if we can determine whether there exist balanced Steinhaus triangles containing an even number of terms, for all the admissible sizes.

The main result of this paper is the following

\begin{thm}\label{thm1}
There exists a binary doubly infinite sequence $S$ such that its orbit $\orb{S}$ contains balanced Steinhaus triangles and balanced generalized Pascal triangles of size $n$, for all positive integers $n$.
\end{thm}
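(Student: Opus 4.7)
The plan is to exhibit a specific doubly-infinite binary sequence $S$ whose orbit $\orb{S}$ is doubly periodic, and then to locate, for every $n$, explicit principal vertices $(i_0,j_0),(i_0',j_0')\in\N\times\Z$ such that $\ST{S}(i_0,j_0,n)$ and $\PT{S}(i_0',j_0',n)$ are balanced. A natural starting point is to pick $S$ of horizontal period $p$ which is chosen so that each row $\partial^i S$ of the orbit is itself balanced over one period (equal number of zeroes and ones). Since derivation preserves the period, every row of $\orb{S}$ will then be $p$-periodic; since we work over $\Zn{2}$, it is classical that the rows also become vertically periodic with some period $q$, yielding a doubly-periodic pattern of fundamental domain $q\times p$ in which only finitely many inequivalent size-$n$ triangles can appear.

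\textbf{Counting and averaging step.} For each $n$ I would study the multiplicity function $(i_0,j_0)\mapsto \m_{\ST{S}(i_0,j_0,n)}(1)$. By the row-balanced choice of $S$ together with the double periodicity, the average of this integer-valued function over a fundamental domain equals the density of $1$s in $\orb{S}$ times $\binom{n+1}{2}$, i.e.\ exactly $\tfrac12\binom{n+1}{2}$. Hence the function must attain some value $\leq\lfloor\tfrac12\binom{n+1}{2}\rfloor$ and some value $\geq\lceil\tfrac12\binom{n+1}{2}\rceil$. Moving the principal vertex by a single unit removes one diagonal edge of the triangle and adds another, so $\m_T(1)$ can change by at most $n$ at a time; what one actually needs is a much sharper step-size control, which I would obtain by arranging neighbouring triangles in the orbit to share all but one column or one row. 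A discrete intermediate-value argument then yields a principal vertex with $\delta\m_T\leq 1$, which is the balancedness condition established in the paragraph before Theorem~\ref{thm1}.

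\textbf{Reduction for the Pascal case.} For the generalized Pascal triangles I would use the isomorphism $\PTset{n}\simeq\STset{2n-1}$ sketched in Figure~\ref{fig11}: the triangle $\PT{S}(i_0,j_0,n)$ sits as the central subtriangle of a larger Steinhaus triangle of size $2n-1$ inside the same orbit. This identification lets me express $\m_{\PT{S}(i_0,j_0,n)}(1)$ as a linear combination of counts in a Steinhaus-type subtriangle and its three discarded corner regions, all read off from $\orb{S}$. Applying the same averaging/variation argument to this linear combination over the same fundamental domain should produce a balanced generalized Pascal triangle of each size.

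\textbf{Main obstacle.} The serious difficulty is ensuring the multiplicity function actually \emph{attains} the exact balanced value rather than jumping over it; this is what forces the construction of $S$ to be delicate rather than generic. I expect that the proof will require either (i)~splitting into residue classes of $n$ modulo the vertical period $q$ and giving, in each class, an explicit ``sweet spot'' $(i_0,j_0)$ whose balancedness can be verified by induction on $n$ using the periodic structure, or (ii)~combining two or more carefully overlapping periodic sequences so that the parities work out in every case $n\equiv 0,1,2,3\bmod 4$. A handful of small $n$ will almost certainly need to be handled as base cases before the periodicity can take over.
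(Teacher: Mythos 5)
There is a genuine gap, and it sits exactly where you flag your ``main obstacle'': the averaging/discrete-intermediate-value step does not close. Summing $\m_{\ST{S}(i_0,j_0,n)}(1)$ over a fundamental domain does give average $\tfrac12{n+1\choose 2}$ (provided the period is balanced), so some position lies at or below the target and some at or above it; but to conclude that the target value is actually \emph{attained} you would need the count to change by at most $1$ between consecutive positions, and moving the principal vertex by one unit swaps out an entire side or diagonal of the triangle, i.e.\ up to $n$ cells. No rearrangement of ``neighbouring triangles sharing all but one row or column'' fixes this, since the differing row or column still has length up to $n$. So the function can jump over the balanced value, and your argument produces no balanced triangle for even a single size $n$. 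You also do not address how to handle infinitely many $n$ at once: a separate existence argument for each $n$ is not a proof unless it is uniform in $n$.

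The paper's route is quite different and is worth contrasting. It restricts to fully $p$-periodic orbits (characterized via the kernel of the Wendt matrix, which is a nontrivial condition your ``classical vertical periodicity'' remark glosses over --- one only gets pseudo-periodicity for free), and proves a decomposition result (Proposition~\ref{prop2}): for $4\mid p$, the triangle $T_k=\ST{S}(i_0,j_0,kp+r)$ splits into one copy of $T_0$, $k$ copies of $T_1\setminus T_0$ and ${k\choose 2}$ copies of the period $P$, so $\m_{T_k}=\m_{T_0}+k\,\m_{T_1\setminus T_0}+{k\choose 2}\m_P$ and the whole infinite family indexed by $k$ is balanced iff the three finite blocks are. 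This reduces Theorem~\ref{thm1} to a finite computer check over the (symmetry-reduced) set $\BresPO{24}$, where six $24$-tuples realize all $24$ residues $r$. The generalized Pascal case is then handled not via the $\PTset{n}\simeq\STset{2n-1}$ embedding you propose, but by a complementation inside the period ($U_0\cup V_0=U_1\cup V_1=P$), which converts balanced Steinhaus families directly into balanced Pascal families. Your proposal correctly identifies periodicity as the right framework, but it is missing both the finite-verification mechanism that makes the problem decidable and any device that guarantees attainment of the balanced count.
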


This theorem completely and positively solves Problem~\ref{prob1}, the generalization of the Steinhaus Problem for the two kinds of triangles, even when the triangles contain an odd number of terms. Note that the existence of balanced Steinhaus triangles with odd cardinality was first announced, without proof, in \citet{Eliahou:2004aa}. For the generalized Pascal triangles, the result is known but has not been published.

This paper is organized as follows. In the next section, the behavior of the $p$-periodic sequences under the action of the derivation process is studied and the set of $p$-tuples that generate $p$-periodic orbits is determined, for all values of $p$. An equivalence relation on the set of $p$-periodic orbits is given in Section~\ref{sec:3}. This permits us to only consider the equivalence classes of $p$-periodic orbits and considerably reduce the number of orbits to analyse in the sequel.  Let $(i_0,j_0)$ be a fixed position in the orbit $\orb{S}$ and $r\in\{0,1,\ldots,p-1\}$ be a fixed residue class modulo $p$. In Section~\ref{sec:4}, necessary and sufficient conditions on the family of Steinhaus triangles $\ST S(i_0,j_0,pk+r)$, for being balanced for all non-negative integers $k$, are obtained. This leads to the proof of Theorem~\ref{thm1} in Section~\ref{sec:5}. Finally, we show in Section~\ref{sec:6} that already known results on balanced triangles modulo $m$ can also be expressed by periodic balanced triangles. 

\section{Periodic orbits}\label{sec:2}

For any $n_1$-tuple $X_1=(a_0,a_1,\ldots,a_{n_1-1})$ and any $n_2$-tuple $X_2=(b_0,b_1,\ldots,b_{n_2-1})$ of elements in $\Zn{2}$, the concatenation $X_1.X_2$ is the $(n_1+n_2)$-tuple $(a_0,a_1,\ldots,a_{n_1-1},b_0,b_1,\ldots,b_{n_2-1})$. For any $n$-tuple $X$, the $kn$-tuple $X^k$ is recursively defined by $X^k=X.X^{k-1}$ for all integers $k\ge 2$, with $X^1=X$. For any $n$-tuple $X=(a_0,a_1,\ldots,a_{n-1})$, the doubly infinite sequence $X^{\infty}=(b_j)_{j\in\Z}$ is defined by $b_{kn+j} = a_{j}$ for all $k\in\Z$ and for all $j\in\{0,1,\ldots,n-1\}$. For any doubly infinite sequence $S=(a_j)_{j\in\Z}$ and any positive integer $n$, we denote by $S[n]$ the initial segment of length $n$ of $S$, that is, the $n$-tuple $S[n]=(a_0,a_1,\ldots,a_{n-1})$.

Let $p$ be a positive integer and let $S=(a_j)_{j\in\Z}$ be a doubly infinite sequence of elements of $\Zn{2}$. The sequence $S$ is said to be {\em periodic of period $p$}, or {\em $p$-periodic}, if $a_{j+p}=a_j$ for all $j\in\Z$. The $p$-periodicity of $S$ is denoted by $S=\left(a_0,a_1,\ldots,a_{p-1}\right)^{\infty}$, where the $p$-tuple $\left(a_0,a_1,\ldots,a_{p-1}\right)$ is a {\em period} of length $p$ of $S$.

First, it is clear that the periodicity of $S$ is preserved under the derivation process.

\begin{prop}\label{prop1}
For any $p$-tuple $\left(a_0,a_1,\ldots,a_{p-1}\right)$, we have
$$
\partial \left(a_0,a_1,\ldots,a_{p-1}\right)^{\infty} = \left(a_{p-1}+a_{0},a_{0}+a_{1},\ldots,a_{p-2}+a_{p-1}\right)^{\infty}
$$
\end{prop}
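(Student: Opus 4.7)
The plan is to expand the definition of the derivative $\partial$ applied to the $p$-periodic sequence $S = (a_0, a_1, \ldots, a_{p-1})^\infty = (b_j)_{j \in \Z}$, where by the convention given just above the statement, $b_{kp+j} = a_j$ for all $k \in \Z$ and $j \in \{0, 1, \ldots, p-1\}$. Then $\partial S = (b_{j-1} + b_j)_{j \in \Z}$. I want to show two things: that $\partial S$ is still $p$-periodic, and that its canonical length-$p$ period is exactly the tuple claimed in the statement.

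First I would verify periodicity. For every $j \in \Z$, the $p$-periodicity of $S$ gives $b_{(j+p)-1} + b_{j+p} = b_{j-1} + b_j$, so $\partial S$ is $p$-periodic. Consequently $\partial S$ is determined by its values at indices $j = 0, 1, \ldots, p-1$, and it suffices to compute these.

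Next I would read off the entries at these indices. For $1 \le j \le p-1$, the index $j-1$ lies in $\{0, \ldots, p-2\}$, so $b_{j-1} = a_{j-1}$ and $b_j = a_j$, hence the $j$-th entry of the period is $a_{j-1} + a_j$. The only entry where care is needed is $j = 0$: here I use $b_{-1} = b_{-1+p} = a_{p-1}$ by $p$-periodicity, so the $0$-th entry is $a_{p-1} + a_0$. Collecting these entries yields the period
$$
(a_{p-1}+a_0,\ a_0+a_1,\ \ldots,\ a_{p-2}+a_{p-1}),
$$
which is exactly the tuple appearing in the statement, completing the proof.

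There is no real obstacle here; the only point requiring attention is the wrap-around at index $0$, where the periodicity must be invoked to identify $b_{-1}$ with $a_{p-1}$. Everything else is a direct unfolding of the definitions of $\partial$ and of the operator $(\cdot)^\infty$.
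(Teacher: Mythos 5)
Your proof is correct and is precisely the direct verification that the paper leaves implicit: the paper states this proposition without proof, introducing it with ``it is clear that the periodicity of $S$ is preserved under the derivation process.'' Your unfolding of the definitions, including the wrap-around identification $b_{-1}=a_{p-1}$ at index $0$, is exactly the intended argument.
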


An infinite sequence $(A_{i})_{i\in\N}$ is said to be {\em pseudo-periodic of period $p$} if there exists $i_0\in\N$ such that $A_{i+p}=A_{i}$ for all $i\ge i_0$.

\begin{prop}
The orbit of a periodic sequence is a pseudo-periodic sequence.
\end{prop}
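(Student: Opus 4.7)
The plan is to use a finite-state / pigeonhole argument. By Proposition~\ref{prop1}, if $S$ is $p$-periodic then so is $\partial S$, hence by induction every iterate $\partial^{i}S$ is $p$-periodic and is therefore determined by a $p$-tuple in $(\Zn{2})^{p}$. First I would note that the set of $p$-periodic doubly infinite binary sequences is in bijection with $(\Zn{2})^{p}$, which is a finite set of cardinality $2^{p}$.

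Next, consider the map $\partial$ restricted to this finite set of $p$-periodic sequences. The orbit $\orb{S}=(\partial^{i}S)_{i\in\N}$ is the forward trajectory of $S$ under iteration of $\partial$. Since the set of states is finite, by pigeonhole there exist indices $i_{1}<i_{2}$ such that $\partial^{i_{1}}S=\partial^{i_{2}}S$. Setting $P:=i_{2}-i_{1}\ge 1$, I can apply $\partial^{k}$ to both sides and use the determinism of the derivation operator to conclude $\partial^{i_{1}+k}S=\partial^{i_{2}+k}S=\partial^{i_{1}+k+P}S$ for every $k\ge 0$. Taking $i_{0}:=i_{1}$, this gives $\partial^{i+P}S=\partial^{i}S$ for all $i\ge i_{0}$, which is exactly the definition of pseudo-periodicity with period $P$.

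There is no real obstacle here; the proof is essentially a one-line pigeonhole argument once Proposition~\ref{prop1} has provided the key structural fact that $p$-periodicity is preserved under $\partial$. The only point worth being careful about is to distinguish the two notions of ``period'' in play (the spatial period $p$ of each sequence $\partial^{i}S$, versus the temporal period $P$ of the sequence of iterates), and to emphasize that $P$ is not claimed to equal $p$ and that the pseudo-periodicity need not hold from $i=0$, since $\partial$ restricted to $p$-periodic sequences need not be injective.
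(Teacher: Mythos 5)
Your proof is correct and follows essentially the same route as the paper: invoke Proposition~\ref{prop1} to see every iterate $\partial^{i}S$ is $p$-periodic, use finiteness of the set of $p$-periodic sequences to find $i_{1}<i_{2}$ with $\partial^{i_{1}}S=\partial^{i_{2}}S$, and apply $\partial$ repeatedly to conclude $\partial^{i+(i_{2}-i_{1})}S=\partial^{i}S$ for all $i\ge i_{1}$. Your closing remark distinguishing the spatial period $p$ from the temporal period $P=i_{2}-i_{1}$ is a sensible clarification but does not change the argument.
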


\begin{proof}
Let $S$ be a $p$-periodic sequence of $\Zn{2}$ and let $\orb{S}=\left(\partial^{i}S\right)_{i\in\N}$ be its associated orbit. By Proposition~\ref{prop1}, we know that, for every non-negative integer $i$, the derived sequence $\partial^{i}S$ is a $p$-periodic sequence. Since the number of $p$-tuples over $\Zn{2}$ and thus the number of $p$-periodic sequences of $\Zn{2}$ is finite, we deduce that there exist $0\le i_1<i_2$ such that $\partial^{i_1}S=\partial^{i_2}S$. This leads to
$$
\partial^{i+(i_2-i_1)}S = \partial^{i-i_1}\partial^{i_2}S = \partial^{i-i_1}\partial^{i_1}S = \partial^{i}S
$$
for all $i\ge i_1$. The sequence $\orb{S}$ is then a pseudo-periodic sequence of period $i_2-i_1$.
\end{proof}

We can retrieve the case where the orbit is pseudo-periodic and not periodic in \citet{Harborth:1972aa,Eliahou:2004aa}. Here, we will study the special case where the orbit is fully periodic.

The orbit $\orb{S}=\left(a_{i,j}\right)_{(i,j)\in\N\times\Z}$ is said to be {\em periodic of period $p$}, or {\em $p$-periodic}, if every row and every column is a $p$-periodic sequence, i.e., if the equalities
$$
a_{i,j+p}=a_{i,j}\quad\text{ and }\quad a_{i+p,j}=a_{i,j}
$$
hold for all $i\in\N$ and all $j\in\Z$. In other words, the orbit $\left(a_{i,j}\right)_{(i,j)\in\N\times\Z}$ is $p$-periodic if the equality
$$
a_{i,j} = a_{\res{i},\res{j}}
$$
holds, for all $(i,j)\in\N\times\Z$, where $\res{x}$ is the remainder in the euclidean division of $x$ by $p$. For example, as depicted in Figure~\ref{fig3}, the orbit $\orb{X^{\infty}}$ associated with the $6$-tuple $X=010100$ is $6$-periodic. Note that a binary triangle appearing in a periodic orbit is simply a periodic binary triangle, as defined above.

\begin{figure}[htbp]
\begin{center}
\begin{tikzpicture}[scale=0.4]\small
\draw[fill=\clr] (6,0) -- (12,0) -- (12,-6) -- (6,-6) -- (6,0);
\draw[fill=\clr] (0,-6) -- (6,-6) -- (6,-12) -- (0,-12) -- (0,-6);
\draw[fill=\clr] (12,-6) -- (18,-6) -- (18,-12) -- (12,-12) -- (12,-6);
\grille{0}{0}{{{0,1,0,1,0,0,0,1,0,1,0,0,0,1,0,1,0,0},
{0,1,1,1,1,0,0,1,1,1,1,0,0,1,1,1,1,0},
{0,1,0,0,0,1,0,1,0,0,0,1,0,1,0,0,0,1},
{1,1,1,0,0,1,1,1,1,0,0,1,1,1,1,0,0,1},
{0,0,0,1,0,1,0,0,0,1,0,1,0,0,0,1,0,1},
{1,0,0,1,1,1,1,0,0,1,1,1,1,0,0,1,1,1},
{0,1,0,1,0,0,0,1,0,1,0,0,0,1,0,1,0,0},
{0,1,1,1,1,0,0,1,1,1,1,0,0,1,1,1,1,0},
{0,1,0,0,0,1,0,1,0,0,0,1,0,1,0,0,0,1},
{1,1,1,0,0,1,1,1,1,0,0,1,1,1,1,0,0,1},
{0,0,0,1,0,1,0,0,0,1,0,1,0,0,0,1,0,1},
{1,0,0,1,1,1,1,0,0,1,1,1,1,0,0,1,1,1}}}
\end{tikzpicture}
\caption{The $6$-periodic orbit $\orb{{010100}^\infty}$}\label{fig3}
\end{center}
\end{figure}

Any square $P_{i_0,j_0}=\left(a_{i_0+i,j_0+j}\right)_{0\le i,j\le p-1}$ of size $p$ is said to be a {\em period} of the $p$-periodic orbit $\orb{S}$. Remark that all the periods of a $p$-periodic orbit have the same multiplicity function, i.e., we have $\m_{P_{i_0,j_0}}=\m_{P_{0,0}}$ for all $(i_0,j_0)\in\N\times\Z$.

The set of $p$-tuples of $\Zn{2}$ that generate $p$-periodic orbits is given in the following theorem, that also appears in \citet{Harborth:1972aa}.

\begin{thm}\label{thm2}
The orbit $\orb{{X}^{\infty}}$ associated with the $p$-tuple $X=(a_0,a_1,\ldots,a_{p-1})$ is $p$-periodic if and only if the vector $v_X=(a_0,a_1,\ldots,a_{p-1})^{t}$ is in the kernel of the matrix $W_p$ which is the Wendt matrix of size $p$ modulo $2$, i.e., the circulant matrix of the binomial coefficients modulo $2$
$$
W_p = \begin{pmatrix}
{p\choose p} & {p\choose{p-1}} & {p\choose{p-2}} & \cdots & {p\choose 1} \\[1.5ex]
{p\choose 1} & {p\choose p} & {p\choose{p-1}} & \cdots & {p\choose 2} \\[1.5ex]
\vdots & \vdots & \vdots & & \vdots \\[1.5ex]
{p\choose{p-1}} & {p\choose{p-2}} & {p\choose{p-3}} & \cdots & {p\choose p} \\
\end{pmatrix}.
$$
\end{thm}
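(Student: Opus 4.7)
The plan is to translate the condition ``$\orb{X^\infty}$ is $p$-periodic'' into an explicit linear condition on the vector $v_X$. By Proposition~\ref{prop1}, every derived sequence $\partial^{i}S$ of a $p$-periodic sequence is again $p$-periodic, so every row of $\orb{X^\infty}$ is already $p$-periodic in the column index $j$. Consequently, only the periodicity in the row index $i$ needs to be established, and since row $i+1$ is completely determined by row $i$ via $\partial$, the orbit is $p$-periodic if and only if row $p$ equals row $0$, i.e., $a_{p,j}=a_{0,j}$ for every $j\in\Z$. By column $p$-periodicity, this is in turn equivalent to the finite list of equations $a_{p,j}=a_{j}$ for $j\in\{0,1,\ldots,p-1\}$.

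The next step is to compute $a_{p,j}$ explicitly. An easy induction on $i$ based on the rule $a_{i,j}=a_{i-1,j-1}+a_{i-1,j}$ yields
\[
a_{i,j}\;=\;\sum_{k=0}^{i}\binom{i}{k}\,a_{0,j-i+k}\pmod{2}.
\]
Specialising to $i=p$ and using $a_{0,m}=a_{m\bmod p}$ gives
\[
a_{p,j}\;=\;\sum_{k=0}^{p}\binom{p}{k}\,a_{(j+k)\bmod p}\pmod{2}.
\]
Since $\binom{p}{0}=\binom{p}{p}=1$, the terms $k=0$ and $k=p$ both contribute $a_{j}$ and cancel modulo~$2$, so the condition $a_{p,j}=a_{j}$ reduces to
\[
a_{j}\;+\;\sum_{k=1}^{p-1}\binom{p}{k}\,a_{(j+k)\bmod p}\;\equiv\;0\pmod{2},\qquad j=0,1,\ldots,p-1.
\]

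Finally, I would identify this system of $p$ linear equations with $W_p v_X\equiv 0\pmod{2}$. Reindexing the sum by $m=(j+k)\bmod p$ gives $\sum_{m=0}^{p-1}\binom{p}{(m-j)\bmod p}\,a_m\equiv 0\pmod{2}$, and the symmetry $\binom{p}{\ell}=\binom{p}{p-\ell}$ turns $\binom{p}{(m-j)\bmod p}$ into $\binom{p}{(j-m)\bmod p}$, which is exactly the entry $W_p[j,m]$. Hence the system is precisely $(W_p v_X)[j]=0$ for $j=0,\ldots,p-1$, proving both directions simultaneously. The only mild subtlety, and the step I expect to need the most care, is the mod-$p$ indexing in these reindexing manipulations, together with keeping the two independent notions of periodicity (in $i$ and in $j$) clearly separated when appealing to Proposition~\ref{prop1}.
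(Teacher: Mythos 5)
Your proposal is correct and follows essentially the same route as the paper: reduce $p$-periodicity of the orbit to the single condition $a_{p,j}=a_{0,j}$ for $j\in\{0,\ldots,p-1\}$ via Proposition~\ref{prop1}, compute $a_{p,j}$ with the binomial formula of Lemma~\ref{lem1}, and identify the resulting linear system with $W_p v_X=0$ using $\binom{p}{\ell}=\binom{p}{p-\ell}$. The only differences are cosmetic indexing choices (summing over $j+k$ rather than $j-k$, and cancelling the $k=0$ and $k=p$ terms explicitly).
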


To prove this result, we use the following lemma where it is shown that each term of the orbit $\orb{S}$ can be expressed as a function of the elements of the sequence $S$. The proof of this lemma is a straightforward induction.

\begin{lem}\label{lem1}
Let $\orb{S}=\left(a_{i,j}\right)_{(i,j)\in\N\times\Z}$ be an orbit and let $i_0$ be a non-negative integer. Then,
$$
a_{i_0+i,j} = \sum_{k=0}^{i}{i\choose k}a_{i_0,j-k}
$$
for all $(i,j)\in\N\times\Z$.
\end{lem}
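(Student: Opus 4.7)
The plan is a straightforward induction on $i\ge 0$, with $i_0$ and $j$ treated as arbitrary parameters throughout. Since the formula concerns the arithmetic of the orbit within $\Zn{2}$ and the binomial coefficients are to be read modulo $2$, Pascal's rule $\binom{i}{k-1}+\binom{i}{k}=\binom{i+1}{k}$ continues to hold and will be the only nontrivial combinatorial input.

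For the base case $i=0$, the right-hand side reduces to $\binom{0}{0}a_{i_0,j}=a_{i_0,j}$, matching the left-hand side. For the inductive step, assume the formula for some $i\ge 0$ and all $(i_0,j)\in\N\times\Z$. Applying the defining rule of the orbit one more time gives
$$
a_{i_0+i+1,j} \;=\; a_{i_0+i,j-1} + a_{i_0+i,j}.
$$
I would then apply the induction hypothesis separately to each of the two terms on the right, obtaining
$$
a_{i_0+i+1,j} \;=\; \sum_{k=0}^{i}\binom{i}{k}a_{i_0,j-1-k} \;+\; \sum_{k=0}^{i}\binom{i}{k}a_{i_0,j-k}.
$$
After reindexing the first sum by $k\mapsto k-1$, both sums can be written over $0\le k\le i+1$, using the convention $\binom{i}{-1}=\binom{i}{i+1}=0$, and grouped into
$$
\sum_{k=0}^{i+1}\left[\binom{i}{k-1}+\binom{i}{k}\right]a_{i_0,j-k} \;=\; \sum_{k=0}^{i+1}\binom{i+1}{k}a_{i_0,j-k},
$$
which is exactly the desired formula for $i+1$.

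I do not anticipate any genuine obstacle: the only points requiring a bit of care are the index shift in the first sum and the use of the zero-extension of the binomial coefficients to merge the two summations cleanly. Since all computations take place in $\Zn{2}$, no sign issues arise.
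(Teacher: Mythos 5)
Your proof is correct and is exactly the ``straightforward induction'' on $i$ that the paper invokes without writing out: base case $i=0$, one application of the orbit's defining rule, reindexing, and Pascal's rule. Nothing further is needed.
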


\begin{proof}[of Theorem~\ref{thm2}]
Let $X=(a_0,a_1,\ldots,a_{p-1})$ be a $p$-tuple and let $\orb{{X}^{\infty}}=\left(a_{i,j}\right)_{(i,j)\in\N\times\Z}$ be its associated orbit. Since, by definition, the sequence $X^{\infty}$ is $p$-periodic, we already know from Proposition~\ref{prop1} that the derived sequences $\partial^{i}X^{\infty}$ are $p$-periodic for all non-negative integers $i$. Therefore the equality $a_{i,j+p}=a_{i,j}$ is true for all $(i,j)\in\N\times\Z$. Thus, the orbit $\orb{X^{\infty}}$ is $p$-periodic if and only if $a_{i+p,j}=a_{i,j}$ for all $(i,j)\in\N\times\Z$. The equality $a_{i+p,j}=a_{i,j}$ holds for all $(i,j)\in\N\times\Z$ if and only if $a_{p,j}=a_{0,j}$ for all $j\in\Z$. Moreover, since the sequences $(a_{0,j})_{j\in\Z}$ and $(a_{p,j})_{j\in\Z}$ are $p$-periodic, the orbit $\orb{X^{\infty}}$ is $p$-periodic if and only if $a_{p,j}=a_{0,j}$ for all $j\in\{0,1,\ldots,p-1\}$. From Lemma~\ref{lem1}, we know that $a_{p,j}=\sum_{k=0}^{p}{p\choose k}a_{0,j-k}$ for all $j\in\{0,1,\ldots,p-1\}$. Therefore the orbit $\orb{X^{\infty}}$ is $p$-periodic if and only if
$$
\left\{\begin{array}{r}
{p\choose 1}a_{0,-1} + {p\choose 2}a_{0,-2} + \cdots + {p \choose p}a_{0,-p}  =  0 \\[2ex]
{p\choose 1}a_{0,0} + {p\choose 2}a_{0,-1} + \cdots + {p \choose p}a_{0,-p+1}  =  0 \\
\vdots\quad\quad \\
{p\choose 1}a_{0,p-2} + {p\choose 2}a_{0,p-3} + \cdots + {p \choose p}a_{0,-1}  = 0 \\
\end{array}\right.
\Longleftrightarrow
\left\{\begin{array}{r}
{p\choose 1}a_{p-1} + {p\choose 2}a_{p-2} + \cdots + {p \choose p}a_{0} = 0 \\[2ex]
{p\choose 1}a_{0} + {p\choose 2}a_{p-1} + \cdots + {p \choose p}a_{1} = 0 \\
\vdots\quad\quad \\
{p\choose 1}a_{p-2} + {p\choose 2}a_{p-3} + \cdots + {p \choose p}a_{p-1} = 0 \\
\end{array}\right.
$$
i.e., if and only if the $p$-tuple $X$ is in the kernel of the Wendt matrix $W_p=\left({p\choose {|i-j|}}\right)_{1\le i,j\le p}$ modulo $2$.
\end{proof}

The set of $p$-tuples $X$ that generate $p$-periodic orbits $\orb{X^{\infty}}$ is denoted by $\PO{p}$. It is then a $\Zn{2}$-vector space isomorphic to the kernel of the Wendt matrix $W_p$ of size $p$ modulo $2$. Table~\ref{tab1} gives $\dim\ker(W_p)$ and $|\PO{p}|=2^{\dim\ker(W_p)}$ for the first few values of $p$.

\begin{table}[htbp]
\begin{center}
{\tabulinesep=1mm
\begin{tabu}{|c||c|c|c|c|c|c|c|c|c|c|c|c|}
\hline
 $p$ & $1$ & $2$ & $3$ & $4$ & $5$ & $6$ & $7$ & $8$ & $9$ & $10$ & $11$ & $12$ \\
\hline
$\dim\ker(W_p)$ & $0$ & $0$ & $2$ & $0$ & $0$ & $4$ & $6$ & $0$ & $2$ & $0$ & $0$ & $8$ \\
\hline
\hline
 $p$ & $13$ & $14$ & $15$ & $16$ & $17$ & $18$ & $19$ & $20$ & $21$ & $22$ & $23$ & $24$ \\
\hline
$\dim\ker(W_p)$ &  $0$ & $12$ & $14$ & $0$ & $0$ & $4$ & $0$ & $0$ & $8$ & $0$ & $0$ & $16$ \\
\hline
\end{tabu}
}
\caption{The first few values of $\dim\ker(W_p)$}\label{tab1}
\end{center}
\end{table}

For example, for $p=6$, we have $\dim\ker(W_6)=4$ and $|\PO{6}|=2^{4}=16$. There are then $16$ different $6$-tuples that generate a $6$-periodic orbit. More precisely, the set $\PO{6}$ is given by
$$
\PO{6} \begin{array}[t]{l}
= \displaystyle\left\langle 000101 , 001010 , 010001 , 100010 \right\rangle \\
= \displaystyle\left\{ 000000 , 000101 , 001010 , 001111 , 010001 , 010100 , 011011 , 011110 , \right. \\
\left.\quad\ 100010 ,  100111 , 101000 , 101101 , 110011 , 110110 , 111001 , 111100 \right\}. \\
\end{array}
$$
We recall here that the $6$-tuple $X=010100$ generates a $6$-periodic orbit as depicted in Figure~\ref{fig3}.

\section{Symmetry group of $\PO{p}$}\label{sec:3}

In this section, a symmetry group on the set of $p$-tuples that generate $p$-periodic orbits is defined. First, the notion of translation and the action of the dihedral group $D_3$ on periodic orbits are introduced.

\subsection{Translation}

Let $\orb{X^\infty}=(a_{\res{i},\res{j}})_{(i,j)\in\N\times\Z}$ be the $p$-periodic orbit associated with $X=(a_0,a_1,\ldots,a_{p-1})\in\PO{p}$. The {\em translate of $X$ by the vector $(u,v)\in\Z^2$} is the $p$-tuple $\ts{u}{v}(X) = (a_{\res{-u},\res{j-v}})_{0\le j\le p-1}$. From Lemma~\ref{lem1}, we know that
$$
\ts{u}{v}(X) = \left(\sum_{k=0}^{\res{-u}}{\res{-u}\choose k}a_{\res{j-v-k}}\right)_{0\le j\le p-1}.
$$
From the definition of $\ts{u}{v}(X)$, it is clear that
$$
\orb{{\ts{u}{v}(X)}^\infty} = \left( a_{\res{i-u},\res{j-v}} \right)_{(i,j)\in\N\times\Z}.
$$
Therefore $\ts{u}{v}$ is an automorphism of $\PO{p}$. Moreover, the application
$$
\begin{array}{ccc}
\left(\Z^2,+\right) & \longrightarrow & \left(\mathrm{Aut}(\PO{p}),\circ\right) \\
(u,v) & \longmapsto & \ts{u}{v}
\end{array}
$$
is a group morphism.

For example, the translate of the $6$-tuple $010100\in\PO{6}$ (Figure~\ref{fig3}) by the vector $(2,3)$ is $\ts{2}{3}(010100)=101000$, as we can see in its orbit $\orb{{\ts{2}{3}(010100)}^{\infty}}=\orb{101000^{\infty}}$ depicted in Figure~\ref{fig4}.

\begin{figure}[htbp]
\begin{center}
\begin{tikzpicture}[scale=0.4]\small
\draw[fill=\clr] (3,0) -- (9,0) -- (9,-2) -- (3,-2) -- (3,0);
\draw[fill=\clr] (15,0) -- (18,0) -- (18,-2) -- (15,-2) -- (15,0);
\draw[fill=\clr] (0,-2) -- (3,-2) -- (3,-8) -- (0,-8) -- (0,-2);
\draw[fill=\clr] (9,-2) -- (15,-2) -- (15,-8) -- (9,-8) -- (9,-2);
\draw[fill=\clr] (3,-8) -- (9,-8) -- (9,-12) -- (3,-12) -- (3,-8);
\draw[fill=\clr] (15,-8) -- (18,-8) -- (18,-12) -- (15,-12) -- (15,-8);
\grille{0}{0}{{{1,0,1,0,0,0,1,0,1,0,0,0,1,0,1,0,0,0},
{1,1,1,1,0,0,1,1,1,1,0,0,1,1,1,1,0,0},
{1,0,0,0,1,0,1,0,0,0,1,0,1,0,0,0,1,0},
{1,1,0,0,1,1,1,1,0,0,1,1,1,1,0,0,1,1},
{0,0,1,0,1,0,0,0,1,0,1,0,0,0,1,0,1,0},
{0,0,1,1,1,1,0,0,1,1,1,1,0,0,1,1,1,1},
{1,0,1,0,0,0,1,0,1,0,0,0,1,0,1,0,0,0},
{1,1,1,1,0,0,1,1,1,1,0,0,1,1,1,1,0,0},
{1,0,0,0,1,0,1,0,0,0,1,0,1,0,0,0,1,0},
{1,1,0,0,1,1,1,1,0,0,1,1,1,1,0,0,1,1},
{0,0,1,0,1,0,0,0,1,0,1,0,0,0,1,0,1,0},
{0,0,1,1,1,1,0,0,1,1,1,1,0,0,1,1,1,1}}}
\end{tikzpicture}
\caption{The translate $\ts{2}{3}(010100)=101000$}\label{fig4}
\end{center}
\end{figure}

\subsection{The dihedral group $D_3$}

First, consider the Steinhaus triangles $\ST{S}=(a_{i,j})_{0\le i\le j\le n-1}$ of size $n$. The {\em left and right sides} of $\ST{S}$ are the sequences $\ls(S)=(a_{n-1-i,n-1-i})_{0\le i\le n-1}$ and $\rs(S)=(a_{i,n-1})_{0\le i\le n-1}$, respectively. From Lemma~\ref{lem1}, we know that $\ls(S)$ and $\rs(S)$ can be expressed as functions of the elements of $S=(a_{j})_{0\le j\le n-1}$
$$
\ls(S) = \left(\sum_{k=0}^{n-1-i}{n-1-i\choose k}a_{n-1-i-k}\right)_{0\le i\le n-1}
\quad\text{and}\quad
\rs(S) = \left(\sum_{k=0}^{i}{i\choose k}a_{n-1-k}\right)_{0\le i\le n-1}.
$$
The {reversed sequence} of $S$ is the sequence read from the right to the left, that is $\is(S)=(a_{n-1-j})_{0\le j\le n-1}$.

Due to the symmetries involved in the local rule that generates $\ST{S}$, the Pascal local rule modulo $2$, it is known that the Steinhaus triangles $\ST{\ls(S)}$, $\ST{\rs(S)}$ and $\ST{\is(S)}$ correspond to the rotations of $\mp 120$ degrees around the center of the triangle $\ST{S}$ and the reflection across the vertical line through the center of $\ST{S}$, respectively. More precisely, for all integers $i$ and $j$ such that $1\le i\le j\le n-1$, we have
\begin{eqnarray}\label{eqn1}
a_{i-1,j-1} + a_{i-1,j} = a_{i,j}\ \Longleftrightarrow\ a_{i-1,j} + a_{i,j} = a_{i-1,j-1}\ \Longleftrightarrow\ a_{i-1,j-1} + a_{i,j} = a_{i-1,j}.
\end{eqnarray}
Therefore
$$
\begin{array}{l}
\ST{\ls(S)} = \ST{(a_{n-1-j,n-1-j})_{0\le j\le n-1}} =  (a_{n-1-j,n-1-j+i})_{0\le i\le j\le n-1},\\[2ex]
\ST{\rs(S)} = \ST{(a_{j,n-1})_{0\le j\le n-1}} =  (a_{j-i,n-1-i})_{0\le i\le j\le n-1},\\[2ex]
\ST{\is(S)} = \ST{(a_{0,n-1-j})_{0\le j\le n-1}} = (a_{i,n-1+i-j})_{0\le i\le j\le n-1}.
\end{array}
$$
Since
$$
\rs^3 = \is^2 = (\is\rs)^2 = id_{(\Zn{2})^n},
$$
the subgroup of $\left(\mathrm{Aut}((\Zn{2})^n),\circ\right)$, the group of automorphisms of the vector space of $n$-tuples over $\Zn{2}$, generated by $\rs$ and $\is$ is isomorphic to the dihedral group $D_3$
$$
\left\langle \rs,\is \middle| \rs^3=\is^2=(\is\rs)^2=id_{(\Zn{2})^n}\right\rangle = D_3.
$$

As depicted in Figure~\ref{fig6}, it is easy to see that the multiplicity function of a Steinhaus triangle is invariant under the action of the dihedral group $D_3$. Indeed, for any finite sequence $S$, we have
$$
\m_{\ST{S}} = \m_{\ST{\rs(S)}} = \m_{\ST{\is(S)}}.
$$

\begin{figure}[htbp]
\begin{center}
\begin{tabular}{cccccc}
\begin{tikzpicture}[scale=0.2]
\pgfmathparse{sqrt(3)}\let\x\pgfmathresult

\node at (0,0) {\color{red}{$\mathbf{0}$}};
\node at (2,0) {$1$};
\node at (4,0) {$0$};
\node at (6,0) {\color{green}{$\mathbf{0}$}};

\node at (1,-\x) {$1$};
\node at (3,-\x) {$1$};
\node at (5,-\x) {$0$};

\node at (2,-2*\x) {$0$};
\node at (4,-2*\x) {$1$};

\node at (3,-3*\x) {\color{blue}{$\mathbf{1}$}};

\draw (-1.5,0.5*\x) -- (7.5,0.5*\x) -- (3,-4*\x) -- (-1.5,0.5*\x);

\draw[color=red, line width = 0.1pt] (3,\x) -- (3,-4.5*\x);
\draw[color=red, line width = 0.1pt] (-2.25,0.75*\x) -- (6,-2*\x);
\draw[color=red, line width = 0.1pt] (8.25,0.75*\x) -- (0,-2*\x);

\end{tikzpicture}
&
\begin{tikzpicture}[scale=0.2]
\pgfmathparse{sqrt(3)}\let\x\pgfmathresult

\draw[color=white, line width = 0.1pt] (3,\x) -- (3,-4.5*\x);

\node at (0,0) {\color{green}{$\mathbf{0}$}};
\node at (2,0) {$0$};
\node at (4,0) {$1$};
\node at (6,0) {\color{blue}{$\mathbf{1}$}};

\node at (1,-\x) {$0$};
\node at (3,-\x) {$1$};
\node at (5,-\x) {$0$};

\node at (2,-2*\x) {$1$};
\node at (4,-2*\x) {$1$};

\node at (3,-3*\x) {\color{red}{$\mathbf{0}$}};

\draw (-1.5,0.5*\x) -- (7.5,0.5*\x) -- (3,-4*\x) -- (-1.5,0.5*\x);

\end{tikzpicture}
&
\begin{tikzpicture}[scale=0.2]
\pgfmathparse{sqrt(3)}\let\x\pgfmathresult

\draw[color=white, line width = 0.1pt] (3,\x) -- (3,-4.5*\x);

\node at (0,0) {\color{blue}{$\mathbf{1}$}};
\node at (2,0) {$0$};
\node at (4,0) {$1$};
\node at (6,0) {\color{red}{$\mathbf{0}$}};

\node at (1,-\x) {$1$};
\node at (3,-\x) {$1$};
\node at (5,-\x) {$1$};

\node at (2,-2*\x) {$0$};
\node at (4,-2*\x) {$0$};

\node at (3,-3*\x) {\color{green}{$\mathbf{0}$}};

\draw (-1.5,0.5*\x) -- (7.5,0.5*\x) -- (3,-4*\x) -- (-1.5,0.5*\x);

\end{tikzpicture}
&
\begin{tikzpicture}[scale=0.2]
\pgfmathparse{sqrt(3)}\let\x\pgfmathresult

\draw[color=white, line width = 0.1pt] (3,\x) -- (3,-4.5*\x);

\node at (0,0) {\color{green}{$\mathbf{0}$}};
\node at (2,0) {$0$};
\node at (4,0) {$1$};
\node at (6,0) {\color{red}{$\mathbf{0}$}};

\node at (1,-\x) {$0$};
\node at (3,-\x) {$1$};
\node at (5,-\x) {$1$};

\node at (2,-2*\x) {$1$};
\node at (4,-2*\x) {$0$};

\node at (3,-3*\x) {\color{blue}{$\mathbf{1}$}};

\draw (-1.5,0.5*\x) -- (7.5,0.5*\x) -- (3,-4*\x) -- (-1.5,0.5*\x);

\end{tikzpicture}
&
\begin{tikzpicture}[scale=0.2]
\pgfmathparse{sqrt(3)}\let\x\pgfmathresult

\draw[color=white, line width = 0.1pt] (3,\x) -- (3,-4.5*\x);

\node at (0,0) {\color{red}{$\mathbf{0}$}};
\node at (2,0) {$1$};
\node at (4,0) {$0$};
\node at (6,0) {\color{blue}{$\mathbf{1}$}};

\node at (1,-\x) {$1$};
\node at (3,-\x) {$1$};
\node at (5,-\x) {$1$};

\node at (2,-2*\x) {$0$};
\node at (4,-2*\x) {$0$};

\node at (3,-3*\x) {\color{green}{$\mathbf{0}$}};

\draw (-1.5,0.5*\x) -- (7.5,0.5*\x) -- (3,-4*\x) -- (-1.5,0.5*\x);

\end{tikzpicture}
&
\begin{tikzpicture}[scale=0.2]
\pgfmathparse{sqrt(3)}\let\x\pgfmathresult

\draw[color=white, line width = 0.1pt] (3,\x) -- (3,-4.5*\x);

\node at (0,0) {\color{blue}{$\mathbf{1}$}};
\node at (2,0) {$1$};
\node at (4,0) {$0$};
\node at (6,0) {\color{green}{$\mathbf{0}$}};

\node at (1,-\x) {$0$};
\node at (3,-\x) {$1$};
\node at (5,-\x) {$0$};

\node at (2,-2*\x) {$1$};
\node at (4,-2*\x) {$1$};

\node at (3,-3*\x) {\color{red}{$\mathbf{0}$}};

\draw (-1.5,0.5*\x) -- (7.5,0.5*\x) -- (3,-4*\x) -- (-1.5,0.5*\x);

\end{tikzpicture} \\
$\ST{S}$ & $\ST{\rs(S)}$ & $\ST{\rs^2(S)}$ & $\ST{\is(S)}$ & $\ST{\rs\is(S)}$ & $\ST{\rs^2\is(S)}$ \\
\end{tabular}
\caption{Action of $D_3$ on $\ST{(0100)}$}\label{fig6}
\end{center}
\end{figure}

The study of rotationally symmetric triangles and dihedrally symmetric triangles, that are triangles $\ST{S}$ such that $S=\rs(S)$ and $S=\rs(S)=\is(S)$, respectively, can be found in \citet{Barbe:2000aa,Brunat:2011aa}.

Now, we consider the restrictions of $\rs$ and $\is$ to the vector space $\PO{p}$ of $p$-tuples that generate $p$-periodic orbits. Since we only consider these restrictions, they are also denoted by $\rs$ and $\is$  in the sequel.

\begin{prop}
For all positive integers $p$, we have
$$
\rs\left(\PO{p}\right) = \is\left(\PO{p}\right) = \PO{p}.
$$
\end{prop}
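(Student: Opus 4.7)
The plan is to prove the two inclusions $\rs(\PO{p}) \subseteq \PO{p}$ and $\is(\PO{p}) \subseteq \PO{p}$; the reverse inclusions then come for free from $\rs^3 = \is^2 = \mathrm{id}$, since for example $\PO{p} = \rs^3(\PO{p}) = \rs(\rs^2(\PO{p})) \subseteq \rs(\PO{p})$ and similarly $\PO{p} = \is^2(\PO{p}) \subseteq \is(\PO{p})$.

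For the forward inclusions, my key step is to extend the $p$-periodic orbit $\orb{X^\infty} = (a_{i,j})_{(i,j)\in\N\times\Z}$ of $X\in\PO{p}$ to a $\Z^2$-indexed array $\tilde a_{i,j} := a_{\res i,\,\res j}$, and to observe that $\tilde a$ still satisfies the local derivation rule $\tilde a_{i,j} = \tilde a_{i-1,j-1} + \tilde a_{i-1,j}$ at every $(i,j)\in\Z^2$. Inside a $p\times p$ period this is immediate; at the horizontal seam $\res j = 0$ it follows from row-periodicity of $\orb{X^\infty}$; and at the vertical seam $\res i = 0$ it follows from the column-periodicity $a_{p,j}=a_{0,j}$, which is precisely the defining property of $X\in\PO{p}$.

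Guided by the explicit formulas $\ST{\rs(S)} = (a_{j-i,\,n-1-i})_{0\le i\le j\le n-1}$ and $\ST{\is(S)} = (a_{i,\,n-1+i-j})_{0\le i\le j\le n-1}$ recalled in the excerpt, I would then set $\tilde c_{i,j} := \tilde a_{j-i,\,p-1-i}$ and $\tilde d_{i,j} := \tilde a_{i,\,p-1+i-j}$ on all of $\Z^2$. Both are $p$-periodic in $i$ and in $j$ by construction. A direct substitution shows they both satisfy the derivation rule: for $\tilde d$ it is immediate from the rule on $\tilde a$ applied at position $(i,\,p-1+i-j)$, while for $\tilde c$ it reduces to the identity $\tilde a_{j-i,\,p-1-i} = \tilde a_{j-i,\,p-i} + \tilde a_{j-i+1,\,p-i}$, which is the second equivalent form of the derivation rule in (\ref{eqn1}) applied at $(j-i+1,\,p-i)$. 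Reading off the top rows, $\tilde c_{0,j} = \tilde a_{j,\,p-1}$ and $\tilde d_{0,j} = \tilde a_{0,\,p-1-j}$ coincide for $0\le j\le p-1$ with $\rs(X)$ (by Lemma~\ref{lem1}) and $\is(X)$ respectively, so $\tilde c|_{\N\times\Z}$ and $\tilde d|_{\N\times\Z}$ are precisely the orbits $\orb{\rs(X)^\infty}$ and $\orb{\is(X)^\infty}$, both visibly $p$-periodic.

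The only delicate step is verifying that $\tilde a$ satisfies the derivation rule at the vertical seam $\res i = 0$: this is the single place where the full orbit-periodicity (rather than merely periodicity of the initial sequence) is used, and it is effectively the content of the hypothesis $X\in\PO{p}$. Everything else is routine $p$-periodicity bookkeeping together with one application of the mod~$2$ symmetry (\ref{eqn1}) of the Pascal rule to handle the case of $\tilde c$.
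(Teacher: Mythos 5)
Your proof is correct and follows essentially the same route as the paper: both arguments establish the explicit reindexing formulas $a_{\res{j-i},\res{-i-1}}$ and $a_{\res{i},\res{i-j-1}}$ for the orbits of $\rs(X)$ and $\is(X)$ via the symmetric form of the Pascal rule in \eqref{eqn1}, and then recover the reverse inclusions from $\rs^3=\is^2=id$. The only difference is organizational — the paper verifies the formulas by induction on the row index, whereas you extend the array to $\Z^2$ and check the local rule directly (correctly isolating the seam $\res{i}=0$ as the one place where the hypothesis $X\in\PO{p}$ is needed) — but the underlying computation is identical.
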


\begin{proof}
Let $X\in\PO{p}$ and $\orb{X^\infty} = (a_{i,j})_{(i,j)\in\N\times\Z}$. Then, by definition, $\rs(X)=(a_{j,p-1})_{0\le j\le p-1}$ and $\is(X)=(a_{0,p-1-j})_{0\le j\le p-1}$. Let $\orb{{\rs(X)}^\infty}=(b_{i,j})_{(i,j)\in\N\times\Z}$ and $\orb{{\is(X)}^\infty}=(c_{i,j})_{(i,j)\in\N\times\Z}$. We will show that $b_{i,j} = a_{\res{j-i},\res{-i-1}}$ and $c_{i,j} = a_{\res{i},\res{i-j-1}}$ for all $i\in\N$ and all $j\in\Z$. We proceed by induction on $i\in\N$. For $i=0$, by definition, we have that
$$
b_{0,j} = b_{0,\res{j}} = a_{\res{j},p-1} = a_{\res{j},\res{-1}}
$$
and
$$
c_{0,j} = c_{0,\res{j}} = a_{0,p-1-\res{j}} = a_{\res{0},\res{-j-1}}
$$
for all $j\in\Z$. Suppose that the formulas are verified for a certain value of $i-1\ge 0$ and for all $j\in\Z$. Then, using \eqref{eqn1},
$$
b_{i,j} = b_{i-1,j-1} + b_{i-1,j} = a_{\res{j-i},\res{-i}} + a_{\res{j-i+1},\res{-i}} = a_{\res{j-i},\res{-i}} + a_{\res{j-i}+1,\res{-i}} = a_{\res{j-i},\res{-i}-1} = a_{\res{j-i},\res{-i-1}}
$$
and
$$
c_{i,j} = c_{i-1,j-1} + c_{i-1,j} \begin{array}[t]{l}
= a_{\res{i-1},\res{i-j-1}} + a_{\res{i-1},\res{i-j-2}} \\
= a_{\res{i-1},\res{i-j-1}} + a_{\res{i-1},\res{i-j-1}-1} = a_{\res{i-1}+1,\res{i-j-1}} = a_{\res{i},\res{i-j-1}} \\
\end{array}
$$
for all $j\in\Z$. This verifies the formulas and we deduce that $b_{i,j}=b_{\res{i},\res{j}}$ and $c_{i,j}=c_{\res{i},\res{j}}$ for all $i\in\N$ and all $j\in\Z$. Then the orbit $\orb{{\rs(X)}^\infty}$ and $\orb{{\is(X)}^\infty}$ are $p$-periodic. Therefore $\rs(X)$ and $\is(X)$ are in $\PO{p}$. This proves that $\rs\left(\PO{p}\right)\subset\PO{p}$ and $\is\left(\PO{p}\right)\subset\PO{p}$ and implies that
$$
\PO{p} = \rs^3\left(\PO{p}\right) \subset\rs^2\left(\PO{p}\right)\subset\rs\left(\PO{p}\right)\subset\PO{p}\quad\text{and}\quad \PO{p}=\is^2\left(\PO{p}\right)\subset\is\left(\PO{p}\right)\subset\PO{p}.
$$
This concludes the proof.
\end{proof}

It follows that $\rs$ and $\is$ are automorphisms of the vector space $\PO{p}$ and the subgroup of $\left(\mathrm{Aut}(\PO{p}),\circ\right)$ generated by $\rs$ and $\is$ is also isomorphic to the dihedral group $D_3$
$$
D_3 = \left\langle \rs,\is \right\rangle = \left\{ id_{\PO{p}} , \rs , \rs^2 , \is , \rs\is , \rs^2\is \right\}.
$$
More precisely, for any $p$-tuple $X$, we have
$$
\begin{array}{ll}
\orb{X^\infty}=(a_{\res{i},\res{j}})_{(i,j)\in\N\times\Z} & \orb{{\is(X)}^\infty}=(a_{\res{i},\res{i-j-1}})_{(i,j)\in\N\times\Z} \\
\orb{{\rs(X)}^\infty}=(a_{\res{j-i},\res{-i-1}})_{(i,j)\in\N\times\Z} & \orb{{\rs\is(X)}^\infty}=(a_{\res{-j-1},\res{-i-1}})_{(i,j)\in\N\times\Z} \\
\orb{{\rs^2(X)}^\infty}=(a_{\res{-j-1},\res{i-j-1}})_{(i,j)\in\N\times\Z} & \orb{{\rs^2\is(X)}^\infty}=(a_{\res{j-i},\res{j}})_{(i,j)\in\N\times\Z} \\
\end{array}
$$
For instance, a representation of $\orb{g(010100)^\infty}$ for all $g\in D_3$ is given in Figure~\ref{fig5}.

\begin{figure}[htbp]
\begin{center}
\begin{tabular}{ccc}

\begin{tikzpicture}[scale=0.4]\small
\draw[fill=\clr] (1,-1) -- (1,-2) -- (2,-2) -- (2,-3) -- (3,-3) -- (3,-4) -- (4,-4) -- (4,-5) -- (5,-5) -- (5,-6) -- (6,-6) -- (6,-7) -- (7,-7) -- (7,-1) -- (1,-1);
\draw[fill=\clrF] (1,-1) -- (7,-1) -- (7,-2) -- (1,-2) -- (1,-1);
\draw[fill=\clrd] (1,-2) -- (2,-2) -- (2,-3) -- (3,-3) -- (3,-4) -- (4,-4) -- (4,-5) -- (5,-5) -- (5,-6) -- (6,-6) -- (6,-7) -- (1,-7) -- (1,-1);
\draw[fill=\clrdF] (1,-2) -- (2,-2) -- (2,-7) -- (1,-7) -- (1,-2);
\grille{0}{0}{{{1,1,0,0,1,1,1,1},
{0,\color{white}{0},1,0,1,0,0,0},
{0,\color{white}{0},1,1,1,1,0,0},
{1,0,1,0,0,0,1,0},
{1,1,1,1,0,0,1,1},
{1,0,0,0,1,0,1,0},
{1,1,0,0,1,1,1,1},
{0,0,1,0,1,0,0,0}}}
\end{tikzpicture}
&
\begin{tikzpicture}[scale=0.4]\small
\draw[fill=\clr] (1,-1) -- (1,-2) -- (2,-2) -- (2,-3) -- (3,-3) -- (3,-4) -- (4,-4) -- (4,-5) -- (5,-5) -- (5,-6) -- (6,-6) -- (6,-7) -- (7,-7) -- (7,-1) -- (1,-1);
\draw[fill=\clrF] (1,-1) -- (2,-1) -- (2,-2) -- (1,-2) -- (1,-1);
\draw[fill=\clrF] (2,-2) -- (3,-2) -- (3,-3) -- (2,-3) -- (2,-2);
\draw[fill=\clrF] (3,-3) -- (4,-3) -- (4,-4) -- (3,-4) -- (3,-3);
\draw[fill=\clrF] (4,-4) -- (5,-4) -- (5,-5) -- (4,-5) -- (4,-4);
\draw[fill=\clrF] (5,-5) -- (6,-5) -- (6,-6) -- (5,-6) -- (5,-5);
\draw[fill=\clrF] (6,-6) -- (7,-6) -- (7,-7) -- (6,-7) -- (6,-6);
\draw[fill=\clrd] (1,-2) -- (2,-2) -- (2,-3) -- (3,-3) -- (3,-4) -- (4,-4) -- (4,-5) -- (5,-5) -- (5,-6) -- (6,-6) -- (6,-7) -- (1,-7) -- (1,-1);
\draw[fill=\clrdF] (1,-6) -- (6,-6) -- (6,-7) -- (1,-7) -- (1,-6);
\grille{0}{0}{{{0,0,0,1,0,1,0,0},
{1,0,0,1,1,1,1,0},
{0,1,0,1,0,0,0,1},
{0,1,1,1,1,0,0,1},
{0,1,0,0,0,1,0,1},
{1,1,1,0,0,1,1,1},
{0,\color{white}{0},0,1,0,1,\color{white}{0},0},
{1,0,0,1,1,1,1,0}}}
\end{tikzpicture}
&
\begin{tikzpicture}[scale=0.4]\small
\draw[fill=\clr] (1,-1) -- (1,-2) -- (2,-2) -- (2,-3) -- (3,-3) -- (3,-4) -- (4,-4) -- (4,-5) -- (5,-5) -- (5,-6) -- (6,-6) -- (6,-7) -- (7,-7) -- (7,-1) -- (1,-1);
\draw[fill=\clrF] (6,-1) -- (7,-1) -- (7,-7) -- (6,-7) -- (6,-1);
\draw[fill=\clrd] (1,-2) -- (2,-2) -- (2,-3) -- (3,-3) -- (3,-4) -- (4,-4) -- (4,-5) -- (5,-5) -- (5,-6) -- (6,-6) -- (6,-7) -- (1,-7) -- (1,-1);
\draw[fill=\clrdF] (1,-2) -- (2,-2) -- (2,-3) -- (1,-3) -- (1,-2);
\draw[fill=\clrdF] (2,-3) -- (3,-3) -- (3,-4) -- (2,-4) -- (2,-3);
\draw[fill=\clrdF] (3,-4) -- (4,-4) -- (4,-5) -- (3,-5) -- (3,-4);
\draw[fill=\clrdF] (4,-5) -- (5,-5) -- (5,-6) -- (4,-6) -- (4,-5);
\draw[fill=\clrdF] (5,-6) -- (6,-6) -- (6,-7) -- (5,-7) -- (5,-6);
\grille{0}{0}{{{0,1,1,1,1,0,0,1},
{0,1,0,0,0,1,\color{white}{0},1},
{1,1,1,0,0,1,1,1},
{0,0,0,1,0,1,0,0},
{1,0,0,1,1,1,1,0},
{0,1,0,1,0,0,0,1},
{0,1,1,1,1,\color{white}{0},0,1},
{0,1,0,0,0,1,0,1}}}
\end{tikzpicture} \\

$\orb{X^\infty}$
&
$\orb{{\rs(X)}^\infty}$
&
$\orb{{\rs^2(X)}^\infty}$ \\ \ \\

\begin{tikzpicture}[scale=0.4]\small
\draw[fill=\clr] (1,-1) -- (1,-2) -- (2,-2) -- (2,-3) -- (3,-3) -- (3,-4) -- (4,-4) -- (4,-5) -- (5,-5) -- (5,-6) -- (6,-6) -- (6,-7) -- (7,-7) -- (7,-1) -- (1,-1);
\draw[fill=\clrF] (1,-1) -- (7,-1) -- (7,-2) -- (1,-2) -- (1,-1);
\draw[fill=\clrd] (1,-2) -- (2,-2) -- (2,-3) -- (3,-3) -- (3,-4) -- (4,-4) -- (4,-5) -- (5,-5) -- (5,-6) -- (6,-6) -- (6,-7) -- (1,-7) -- (1,-1);
\draw[fill=\clrdF] (1,-2) -- (2,-2) -- (2,-3) -- (1,-3) -- (1,-2);
\draw[fill=\clrdF] (2,-3) -- (3,-3) -- (3,-4) -- (2,-4) -- (2,-3);
\draw[fill=\clrdF] (3,-4) -- (4,-4) -- (4,-5) -- (3,-5) -- (3,-4);
\draw[fill=\clrdF] (4,-5) -- (5,-5) -- (5,-6) -- (4,-6) -- (4,-5);
\draw[fill=\clrdF] (5,-6) -- (6,-6) -- (6,-7) -- (5,-7) -- (5,-6);
\grille{0}{0}{{{1,1,1,0,0,1,1,1},
{0,0,0,1,0,1,\color{white}{0},0},
{1,\color{white}{0},0,1,1,1,1,0},
{0,1,0,1,0,0,0,1},
{0,1,1,1,1,0,0,1},
{0,1,0,0,0,1,0,1},
{1,1,1,0,0,1,1,1},
{0,0,0,1,0,1,0,0}}}
\end{tikzpicture}
&
\begin{tikzpicture}[scale=0.4]\small
\draw[fill=\clr] (1,-1) -- (1,-2) -- (2,-2) -- (2,-3) -- (3,-3) -- (3,-4) -- (4,-4) -- (4,-5) -- (5,-5) -- (5,-6) -- (6,-6) -- (6,-7) -- (7,-7) -- (7,-1) -- (1,-1);
\draw[fill=\clrF] (1,-1) -- (2,-1) -- (2,-2) -- (1,-2) -- (1,-1);
\draw[fill=\clrF] (2,-2) -- (3,-2) -- (3,-3) -- (2,-3) -- (2,-2);
\draw[fill=\clrF] (3,-3) -- (4,-3) -- (4,-4) -- (3,-4) -- (3,-3);
\draw[fill=\clrF] (4,-4) -- (5,-4) -- (5,-5) -- (4,-5) -- (4,-4);
\draw[fill=\clrF] (5,-5) -- (6,-5) -- (6,-6) -- (5,-6) -- (5,-5);
\draw[fill=\clrF] (6,-6) -- (7,-6) -- (7,-7) -- (6,-7) -- (6,-6);
\draw[fill=\clrd] (1,-2) -- (2,-2) -- (2,-3) -- (3,-3) -- (3,-4) -- (4,-4) -- (4,-5) -- (5,-5) -- (5,-6) -- (6,-6) -- (6,-7) -- (1,-7) -- (1,-1);
\draw[fill=\clrdF] (1,-2) -- (2,-2) -- (2,-7) -- (1,-7) -- (1,-2);
\grille{0}{0}{{{0,0,1,1,1,1,0,0},
{1,\color{white}{0},1,0,0,0,1,0},
{1,1,1,1,0,0,1,1},
{1,0,0,0,1,0,1,0},
{1,1,0,0,1,1,1,1},
{0,0,1,0,1,0,0,0},
{0,\color{white}{0},1,1,1,1,0,0},
{1,0,1,0,0,0,1,0}}}
\end{tikzpicture}
&
\begin{tikzpicture}[scale=0.4]\small
\draw[fill=\clr] (1,-1) -- (1,-2) -- (2,-2) -- (2,-3) -- (3,-3) -- (3,-4) -- (4,-4) -- (4,-5) -- (5,-5) -- (5,-6) -- (6,-6) -- (6,-7) -- (7,-7) -- (7,-1) -- (1,-1);
\draw[fill=\clrF] (6,-1) -- (7,-1) -- (7,-7) -- (6,-7) -- (6,-1);
\draw[fill=\clrd] (1,-2) -- (2,-2) -- (2,-3) -- (3,-3) -- (3,-4) -- (4,-4) -- (4,-5) -- (5,-5) -- (5,-6) -- (6,-6) -- (6,-7) -- (1,-7) -- (1,-1);
\draw[fill=\clrdF] (1,-6) -- (6,-6) -- (6,-7) -- (1,-7) -- (1,-6);
\grille{0}{0}{{{0,1,0,1,0,0,0,1},
{0,1,1,1,1,0,0,1},
{0,1,0,0,0,1,0,1},
{1,1,1,0,0,1,1,1},
{0,0,0,1,0,1,0,0},
{1,0,0,1,1,1,1,0},
{0,1,0,1,0,\color{white}{0},\color{white}{0},1},
{0,1,1,1,1,0,0,1}}}
\end{tikzpicture} \\

$\orb{{\is(X)}^\infty}$
&
$\orb{{\rs\is(X)}^\infty}$
&
$\orb{{\rs^2\is(X)}^\infty}$ \\

\end{tabular}
\caption{Action of $D_3$ on $\orb{010100^\infty}$}\label{fig5}
\end{center}
\end{figure}

\subsection{The symmetry group of $\PO{p}$}

Let $G$ be the subgroup of $\left(\mathrm{Aut}(\PO{p}),\circ\right)$ generated by $\rs$, $\is$, $\ts{1}{0}$ and $\ts{0}{1}$, that is,
$$
G := \left\langle \rs , \is , \ts{1}{0} , \ts{0}{1} \right\rangle.
$$
As in $D_3$, the equality $\is\rs=\rs^2\is$ holds in $G$. The equalities involving the translations are listed below.

\begin{prop}
For all $(u,v)\in\Z^2$, the equalities $\rs\ts{u}{v} = \ts{v-u}{-u}\rs$ and $\is\ts{u}{v}=\ts{u}{u-v}\is$ hold.
\end{prop}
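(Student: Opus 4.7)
The plan is to prove both identities by directly applying the orbit formulas displayed immediately before the proposition and comparing indices position by position. Writing $\orb{X^\infty}=(a_{\res{i},\res{j}})_{(i,j)\in\N\times\Z}$, one knows, for any $Y\in\PO{p}$ with orbit $(b_{i,j})$, that $\orb{\ts{u}{v}(Y)^\infty}_{i,j}=b_{\res{i-u},\res{j-v}}$, $\orb{\rs(Y)^\infty}_{i,j}=b_{\res{j-i},\res{-i-1}}$, and $\orb{\is(Y)^\infty}_{i,j}=b_{\res{i},\res{i-j-1}}$, so the effect of each generator on the orbit is completely explicit. The strategy is then to compute the orbit of both sides of each identity at an arbitrary position $(i,j)$ and check agreement.

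For the first identity $\rs\ts{u}{v}=\ts{v-u}{-u}\rs$, I would first apply the translation formula to $X$, obtaining the orbit $(a_{\res{i-u},\res{j-v}})$, and then apply the $\rs$-formula to this new orbit. This gives
$$\orb{\rs\ts{u}{v}(X)^\infty}_{i,j}=a_{\res{(j-i)-u},\res{(-i-1)-v}}.$$
For the right-hand side, I would use the orbit $(a_{\res{j-i},\res{-i-1}})$ of $\rs(X)^\infty$ and then apply the translation by $(v-u,-u)$, which amounts to replacing $(i,j)$ by $(\res{i-(v-u)},\res{j+u})$ in that formula. Simplifying the two resulting expressions modulo $p$ and matching the row and column indices finishes this half.

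The second identity $\is\ts{u}{v}=\ts{u}{u-v}\is$ is handled identically, but now using the $\is$-formula in place of the $\rs$-formula. The left-hand side evaluates to $a_{\res{i-u},\res{(i-j-1)-v}}$, and the right-hand side, obtained by translating the orbit of $\is(X)^\infty$ by $(u,u-v)$, evaluates to $a_{\res{i-u},\res{(i-u)-(j-(u-v))-1}}$; both reduce to $a_{\res{i-u},\res{i-j-v-1}}$.

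I do not anticipate any real obstacle: the proof is purely formal index chasing within the euclidean remainders $\res{\cdot}$, and the double $p$-periodicity of the orbit $(a_{i,j})$ makes every reduction modulo $p$ transparent. No auxiliary result beyond the orbit formulas already established is required. The value of the proposition lies not in its depth but in its consequences, namely that $G=\langle\rs,\is,\ts{1}{0},\ts{0}{1}\rangle$ splits as a semidirect product of the translation subgroup by $D_{3}$, which is what drives the reduction to equivalence classes of $p$-periodic orbits in the remainder of Section~\ref{sec:3}.
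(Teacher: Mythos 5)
Your overall strategy is exactly the paper's: evaluate the orbit of each side at a generic position $(i,j)$ using the displayed formulas and compare indices. Your treatment of the second identity is correct and complete, and agrees with the paper. The problem is in the first identity, where you write ``simplifying the two resulting expressions modulo $p$ and matching the row and column indices finishes this half'' without actually doing the comparison — and if you do it, it fails. Under your reading of $\rs\ts{u}{v}$ as $\rs\circ\ts{u}{v}$ (translate first, then rotate), you correctly get
$$
\orb{{\rs\ts{u}{v}(X)}^\infty}_{i,j}=a_{\res{j-i-u},\,\res{-i-1-v}},
$$
but the right-hand side $\ts{v-u}{-u}\circ\rs$ evaluates to
$$
\orb{{\ts{v-u}{-u}\rs(X)}^\infty}_{i,j}=a_{\res{(j+u)-(i-v+u)},\,\res{-(i-v+u)-1}}=a_{\res{j-i+v},\,\res{v-u-i-1}},
$$
and these agree only when $v\equiv-u$ and $u\equiv 2v\pmod p$. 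A concrete check with $p=6$, $X=010100$, $(u,v)=(0,1)$ gives $\rs(\ts{0}{1}(X))=010001$ while $\ts{1}{0}(\rs(X))=001010$. Under your convention the valid commutation relation is $\rs\ts{u}{v}=\ts{-v}{u-v}\rs$, not the stated one.

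Note that your formula for the left-hand side also contradicts the paper's own displayed value $\orb{{\rs\ts{u}{v}(X)}^\infty}=\left(a_{\res{j-i+u-v},\,\res{-i-1+u}}\right)$. The paper's computation is only consistent with reading $\rs\ts{u}{v}$ as ``apply $\rs$ first, then $\ts{u}{v}$'' (a right-action, left-to-right convention); with that reading both sides of the stated identity do reduce to $a_{\res{j-i+u-v},\,\res{-i-1+u}}$ and the proposition holds as written. So you must either adopt that convention throughout (in which case your evaluation of the left-hand side has to be redone), or keep yours and correct the translation parameters to $(-v,u-v)$. The reason your second identity survives either choice is that $\is$ is an involution and $(u,v)\mapsto(u,u-v)$ is an involution, so the relation $\is\ts{u}{v}=\ts{u}{u-v}\is$ is self-conjugate; the $\rs$ relation is not, and the two conventions produce the two distinct conjugates $(v-u,-u)$ and $(-v,u-v)$ of each other. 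For the use made of the proposition afterwards (the normal form $g=\ts{u}{v}\rs^\alpha\is^\beta$ and $|G|=6p^2$) only the existence of some such commutation rule matters, but as a proof of the stated equalities your argument as written does not close.
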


\begin{proof}
Let $(u,v)\in\Z^2$ and $\orb{X^\infty}=(a_{\res{i},\res{j}})_{(i,j)\in\N\times\Z}$ be a $p$-periodic orbit. Then,
$$
\orb{{\rs\ts{u}{v}(X)}^\infty} = \left( a_{\res{j-i+u-v},\res{-i-1+u}} \right)_{(i,j)\in\N\times\Z} = \orb{{\ts{v-u}{-u}\rs(X)}^\infty}
$$
and
$$
\orb{{\is\ts{u}{v}(X)}^\infty} = \left( a_{\res{i-u},\res{i-j-1+v-u}} \right)_{(i,j)\in\N\times\Z} = \orb{{\ts{u}{u-v}\is(X)}^\infty}.
$$
\end{proof}

From these equalities, it is clear that each element $g\in G$ can be uniquely written as
$$
g = \ts{u}{v}\rs^\alpha\is^\beta
$$
with $u,v\in\{0,1,\ldots,p-1\}$, $\alpha\in\{0,1,2\}$ and $\beta\in\{0,1\}$. Therefore $G$ is a group of order $|G|=6p^2$.

\subsection{Equivalence classes of $\PO{p}$}

Now, we consider the binary relation $\sim_G$ on the set $\PO{p}$ defined by $X_1\sim_{G}X_2$ if and only if there exists $g\in G$ such that $X_2=g(X_1)$. Since $G$ is a subgroup of $\left(\mathrm{Aut}(\PO{p}),\circ\right)$, it is clear that $\sim_G$ is an equivalence relation on $\PO{p}$. Therefore, to search balanced triangles, it is sufficient to examine only one representative of each equivalence classe in the set $\resPO{p}:=\PO{p}/\sim_G$. In the sequel, the equivalence class of the tuple $X$ is denoted by $\res{X}$ and the lexicographically smallest tuple $X$ is used as the representative of each equivalence class $\res{X}$.

For example, for $p=6$, $\PO{6}/\sim_G$ consists of $3$ equivalence classes that contain the $16$ tuples of $\PO{6}$ generating $6$-periodic orbits. More precisely,
$$
\resPO{6}=\PO{6}/\sim_G \begin{array}[t]{l}
= \displaystyle\left\{ \left\{000000\right\} , \left\{000101 , 001010 , 001111 , 010001 , 010100 , 011110 , 100010 , \right.\right. \\
\left.\left.\quad\ 100111 , 101000 , 110011 , 111001 , 111100 \right\} , \left\{011011 , 101101 , 110110\right\}\right\} \\
= \displaystyle\left\{ \res{000000} , \res{000101}, \res{011011} \right\} \\
\end{array}
$$
since
$$
\begin{array}{lll}
001010 = \ts{0}{5}(000101) & 100010 = \ts{0}{1}(000101) & 101101 = \ts{0}{1}(011011)\\
001111 = \ts{1}{3}(000101) & 100111 = \ts{1}{4}(000101) & 110110 = \ts{0}{2}(011011)\\
010001 = \ts{0}{2}(000101) & 101000 = \ts{0}{3}(000101) & \\
010100 = \ts{0}{4}(000101) & 110011 = \ts{1}{5}(000101) & \\
011110 = \ts{1}{2}(000101) & 111001 = \ts{1}{0}(000101) & \\
 & 111100 = \ts{1}{1}(000101) & \\
\end{array}
$$
The $6$-periodic orbits associated with these $3$ equivalence classes are depicted in Figure~\ref{fig7}.

\begin{figure}[htbp]
\begin{center}
\begin{tabular}{ccccc}
\begin{tikzpicture}[scale=0.4]\small
\draw[fill=\clr] (0,0) -- (1,0) -- (1,-1) -- (0,-1) -- (0,0);
\draw[fill=\clr] (0,-7) -- (1,-7) -- (1,-8) -- (0,-8) -- (0,-7);
\draw[fill=\clr] (7,0) -- (8,0) -- (8,-1) -- (7,-1) -- (7,0);
\draw[fill=\clr] (7,-7) -- (8,-7) -- (8,-8) -- (7,-8) -- (7,-7);
\draw[fill=\clr] (1,-1) -- (7,-1) -- (7,-7) -- (1,-7) -- (1,-1);
\grille{0}{0}{{{0,0,0,0,0,0,0,0},
{0,0,0,0,0,0,0,0},
{0,0,0,0,0,0,0,0},
{0,0,0,0,0,0,0,0},
{0,0,0,0,0,0,0,0},
{0,0,0,0,0,0,0,0},
{0,0,0,0,0,0,0,0},
{0,0,0,0,0,0,0,0}}}
\end{tikzpicture}
& & 
\begin{tikzpicture}[scale=0.4]\small
\draw[fill=\clr] (0,0) -- (1,0) -- (1,-1) -- (0,-1) -- (0,0);
\draw[fill=\clr] (0,-7) -- (1,-7) -- (1,-8) -- (0,-8) -- (0,-7);
\draw[fill=\clr] (7,0) -- (8,0) -- (8,-1) -- (7,-1) -- (7,0);
\draw[fill=\clr] (7,-7) -- (8,-7) -- (8,-8) -- (7,-8) -- (7,-7);
\draw[fill=\clr] (1,-1) -- (7,-1) -- (7,-7) -- (1,-7) -- (1,-1);
\grille{0}{0}{{{1,1,1,1,0,0,1,1},
{1,0,0,0,1,0,1,0},
{1,1,0,0,1,1,1,1},
{0,0,1,0,1,0,0,0},
{0,0,1,1,1,1,0,0},
{1,0,1,0,0,0,1,0},
{1,1,1,1,0,0,1,1},
{1,0,0,0,1,0,1,0}}}
\end{tikzpicture}
& & 
\begin{tikzpicture}[scale=0.4]\small
\draw[fill=\clr] (0,0) -- (1,0) -- (1,-1) -- (0,-1) -- (0,0);
\draw[fill=\clr] (0,-7) -- (1,-7) -- (1,-8) -- (0,-8) -- (0,-7);
\draw[fill=\clr] (7,0) -- (8,0) -- (8,-1) -- (7,-1) -- (7,0);
\draw[fill=\clr] (7,-7) -- (8,-7) -- (8,-8) -- (7,-8) -- (7,-7);
\draw[fill=\clr] (1,-1) -- (7,-1) -- (7,-7) -- (1,-7) -- (1,-1);
\grille{0}{0}{{{1,1,0,1,1,0,1,1},
{1,0,1,1,0,1,1,0},
{0,1,1,0,1,1,0,1},
{1,1,0,1,1,0,1,1},
{1,0,1,1,0,1,1,0},
{0,1,1,0,1,1,0,1},
{1,1,0,1,1,0,1,1},
{1,0,1,1,0,1,1,0}}}
\end{tikzpicture}
\end{tabular}
\end{center}
\caption{The set $\resPO{6}=\left\{ \res{000000} , \res{000101} , \res{011011} \right\}$}\label{fig7}
\end{figure}

Table~\ref{tab2} gives $\left| \resPO{p} \right|$ for the first few values of $p$.

\begin{table}[htbp]
\begin{center}
{\tabulinesep=1.2mm
\begin{tabu}{|c||c|c|c|c|c|c|c|c|c|c|c|c|}
\hline
 $p$ & $1$ & $2$ & $3$ & $4$ & $5$ & $6$ & $7$ & $8$ & $9$ & $10$ & $11$ & $12$ \\
\hline
$\left| \PO{p} \right|$ & $1$ & $1$ & $2^2$ & $1$ & $1$ & $2^4$ & $2^6$ & $1$ & $2^2$ & $1$ & $1$ & $2^8$ \\
\hline
$\left| \resPO{p} \right|$ & $1$ & $1$ & $2$ & $1$ & $1$ & $3$ & $3$ & $1$ & $2$ & $1$ & $1$ & $7$ \\
\hline
\hline
 $p$ & $13$ & $14$ & $15$ & $16$ & $17$ & $18$ & $19$ & $20$ & $21$ & $22$ & $23$ & $24$ \\
\hline
$\left| \PO{p} \right|$ & $1$ & $2^{12}$ & $2^{14}$ & $1$ & $1$ & $2^{4}$ & $1$ & $1$ & $2^{8}$ & $1$ & $1$ & $2^{16}$ \\
\hline
$\left| \resPO{p} \right|$ & $1$ & $13$ & $30$ & $1$ & $1$ & $3$ & $1$ & $1$ & $6$ & $1$ & $1$ & $92$ \\
\hline
\end{tabu}}
\caption{The first few values of $\left| \resPO{p} \right|$}\label{tab2}
\end{center}
\end{table}

\section{Family of periodic balanced Steinhaus triangles with the same principal vertex}\label{sec:4}

In this section, we determine necessary and sufficient conditions for obtaining, in a $p$-periodic orbit, an infinite family of balanced Steinhaus triangles with the same principal vertex.

\begin{prop}\label{prop2}
Let $S=X^\infty$ with $X\in\PO{p}$, $(i_0,j_0)\in\N\times\Z$ and $r\in\{0,1,\ldots,p-1\}$. The Steinhaus triangles
$$
T_k := \ST{S}(i_0,j_0,kp+r)
$$
are balanced for all non-negative integers $k$ if and only if the triangle $T_0$, the multiset difference $T_1\setminus T_0$ and the period $P$ are balanced, with $p$ divisible by $4$.
\end{prop}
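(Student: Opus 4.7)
The plan is to track the signed multiplicity difference $D_X := \m_X(1) - \m_X(0)$, whose absolute value equals $\delta\m_X$, so that a multiset $X$ is balanced precisely when $|D_X|\leq 1$. I will argue that $D_{T_k}$ is a polynomial in $k$ of degree at most two, and then extract the stated conditions from the requirement that its absolute value remain bounded by $1$ for every $k\geq 0$.

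The first step is a structural decomposition of the slab $T_{k+1}\setminus T_k$ using the $p$-periodicity of $\orb{S}$. Writing $n_k = kp+r$, this slab consists of the pairs $(i,j)$ with $n_k\leq j\leq n_{k+1}-1$ and $0\leq i\leq j$. It splits into a bottom Steinhaus subtriangle of size $p$ (the positions with $i\geq n_k$) and an upper rectangle of index-dimensions $(kp+r)\times p$ (the positions with $i<n_k$). Since the $j$-range covers a complete residue system modulo $p$, while the $i$-range decomposes as a fixed initial block of length $r$ followed by $k$ complete periods of length $p$, the periodicity of $\orb{S}$ yields
\[
\m_{T_{k+1}\setminus T_k} \;=\; \m_B + k\,\m_P + \m_R,
\]
where $B$ is a fixed size-$p$ Steinhaus triangle and $R$ a fixed $r\times p$ region, both independent of $k$. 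Specialising to $k=0$, where no full period copies arise, identifies $\m_B+\m_R$ with $\m_{T_1\setminus T_0}$.

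Telescoping the disjoint decomposition $T_k = T_0 \sqcup \bigsqcup_{j=0}^{k-1}(T_{j+1}\setminus T_j)$ then produces
\[
D_{T_k} \;=\; D_{T_0} + k\,D_{T_1\setminus T_0} + \binom{k}{2}D_P,
\]
a polynomial in $k$ of degree at most two. Such a polynomial has absolute value bounded by $1$ on all of $\N$ if and only if its coefficients (with respect to the basis $1,k,\binom{k}{2}$) of $k^2$ and of $k$ both vanish, that is $D_P = 0$ and $D_{T_1\setminus T_0} = 0$; the remaining requirement is then $|D_{T_0}|\leq 1$, i.e.\ $T_0$ balanced.

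It remains to rephrase these algebraic conditions in the form stated. For any balanced multiset $X$, the equality $D_X = 0$ is equivalent to $|X|$ being even, since $D_X\equiv |X|\pmod 2$. Applied to $P$ with $|P|=p^2$, this forces $p$ to be even; applied to $T_1\setminus T_0$ with cardinality $\binom{p+r+1}{2}-\binom{r+1}{2}=\tfrac{p(p+2r+1)}{2}$ and $p+2r+1$ odd (once $p$ is even), it forces $p\equiv 0\pmod{4}$. Conversely, $p\equiv 0\pmod 4$ makes both cardinalities even, and then the balancedness of $P$ and of $T_1\setminus T_0$ automatically upgrades to $D_P = D_{T_1\setminus T_0}=0$. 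The main obstacle is the geometric decomposition of the slab in the first step, where the $p$-periodicity must be tracked carefully in both index directions; after that the polynomial boundedness argument and the parity bookkeeping are routine.
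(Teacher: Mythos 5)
Your proof is correct and follows essentially the same route as the paper: both rest on the identity $\m_{T_k} = \m_{T_0} + k\,\m_{T_1\setminus T_0} + \binom{k}{2}\m_P$ (which the paper obtains by a direct block decomposition of $T_k$ and you obtain by telescoping the slabs $T_{k+1}\setminus T_k$), both deduce in the converse direction that boundedness forces the linear and quadratic coefficients to vanish (the paper via a limit as $k\to\infty$, you via the fact that a bounded polynomial on $\N$ is constant), and both extract the condition $4\mid p$ from the parities of $|P|=p^2$ and $|T_1\setminus T_0|=\frac{p(p+2r+1)}{2}$. The differences are purely presentational.
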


\begin{proof}
Suppose that $(i_0,j_0)\in\N\times\Z$ and $r\in\{0,1,\ldots,p-1\}$ are fixed. Let $k\in\N$. Then, from the periodicity of $\orb{S}$, we know that the Steinhaus triangle $T_k=\ST{S}(i_0,j_0,kp+r)$ can be decomposed into elementary blocks $T_0$, $T_1\setminus T_0$ and $P$, as represented in Figure~\ref{fig8} for $k=5$.

\begin{figure}[htbp]
\tiny
\begin{center}
\begin{tikzpicture}[scale=0.125]
\foreach \x in {0,1,...,44} {
	\draw (\x,-\x) -- (\x,-\x-1) -- (\x+1,-\x-1);
}
\draw (0,0) -- (45,0) -- (45,-45);
\draw (5,0) -- (5,-5);
\draw (13,0) -- (13,-13);
\draw (21,0) -- (21,-21);
\draw (29,0) -- (29,-29);
\draw (37,0) -- (37,-37);
\draw (13,-8) -- (45,-8);
\draw (21,-16) -- (45,-16);
\draw (29,-24) -- (45,-24);
\draw (37,-32) -- (45,-32);
\draw (3.5,-1.5) node {$T_0$};
\draw (9,-4) node {$T_1\setminus T_0$};
\draw (17,-4) node {$P$};
\draw (25,-4) node {$P$};
\draw (33,-4) node {$P$};
\draw (41,-4) node {$P$};
\draw (17,-12) node {$T_1\setminus T_0$};
\draw (25,-12) node {$P$};
\draw (33,-12) node {$P$};
\draw (41,-12) node {$P$};
\draw (25,-20) node {$T_1\setminus T_0$};
\draw (33,-20) node {$P$};
\draw (41,-20) node {$P$};
\draw (33,-28) node {$T_1\setminus T_0$};
\draw (41,-28) node {$P$};
\draw (41,-36) node {$T_1\setminus T_0$};
\end{tikzpicture}
\end{center}
\caption{Decomposition of $T_5$}\label{fig8}
\end{figure}

More precisely, the triangle $T_k$ is constituted by one block $T_0$, $k$ blocks $T_1\setminus T_0$ and ${k\choose 2}$ blocks $P$. It follows that the multiplicity function $\m_{T_k}$ of the triangle $T_k$ must verify that
$$
\m_{T_k} = \m_{T_0} + k\m_{T_1\setminus T_0} + {k\choose 2}\m_P.
$$
First, suppose that $T_0$, $T_1\setminus T_0$ and $P$ are balanced with $p$ divisible by $4$. Since $p$ is divisible by $4$, it is clear that the cardinalities $|T_1\setminus T_0|=pr+{p+1\choose 2}$ and $|P|=p^2$ are even. Therefore, since the multiset difference $T_1\setminus T_0$ and the period $P$ are balanced, the multiplicity functions  $\m_{T_1\setminus T_0}$ and $\m_P$ are constant. It follows that $\delta_{T_K}=\delta_{T_0}\in\{0,1\}$ and thus the triangles $T_k$ are balanced for all non-negative integers $k$.

Conversely, suppose that the triangles $T_k$ are balanced for all non-negative integers $k$. Thus, the value of $(\m_{T_k}(0)-\m_{T_k}(1))-(\m_{T_0}(0)-\m_{T_0}(1))$ is in the set $\{-2,-1,0,1,2\}$ for all $k\in\N$. Therefore,
$$
\lim\limits_{k\to\infty}\frac{(\m_{T_k}(0)-\m_{T_k}(1))-(\m_{T_0}(0)-\m_{T_0}(1))}{k} = 0.
$$
It follows that
$$
\lim\limits_{k\to\infty} (\m_{T_1\setminus T_0}(0)-\m_{T_1\setminus T_0}(1)) + \frac{k-1}{2}(\m_{P}(0)-\m_{P}(1)) = 0.
$$
Then, we deduce that $\m_{T_1\setminus T_0}(0)-\m_{T_1\setminus T_0}(1)=\m_{P}(0)-\m_{P}(1)=0$. Therefore the multiset difference $T_1\setminus T_0$ and the period $P$ are balanced and with even cardinalities. Finally, since $|P|=p^2$ and $|T_1\setminus T_0| = pr+{p+1\choose 2}$, we conclude that $p$ must be divisible by $4$ in this case.
\end{proof}

This is the reason why, in the sequel of this paper, we only consider $p$-periodic orbits with a balanced period and where $p$ is divisible by $4$.

Note that the period of the orbit generated from every element of a same equivalence class of $\resPO{p}$ has the same multiplicity function. Let us denote by $\BresPO{p}$ the set of all the equivalence classes of $\resPO{p}$ having a balanced period. Table~\ref{tab3} gives $\left| \BresPO{p} \right|$ for the first few values of $p$ divisible by $4$.

\begin{table}[htbp]
\begin{center}
{\tabulinesep=1.2mm
\begin{tabu}{|c||c|c|c|c|c|c|}
\hline
 $p$ & $4$ & $8$ & $12$ & $16$ & $20$ & $24$ \\
\hline
$\left| \PO{p} \right|$ & $1$ & $1$ & $256$ & $1$ & $1$ & $65536$ \\
\hline
$\left| \resPO{p} \right|$ & $1$ & $1$ & $7$ & $1$ & $1$ & $92$ \\
\hline
$\left| \BresPO{p} \right|$ & $0$ & $0$ & $2$ & $0$ & $0$ & $17$ \\
\hline
\end{tabu}}
\caption{The first few values of $\left| \BresPO{p} \right|$}\label{tab3}
\end{center}
\end{table}

More precisely, we obtain
$$
\BresPO{12} = \left\{\res{Y_1},\res{Y_2}\right\} = \left\{ \res{000001110111} , \res{000101000101} \right\}
$$
and $\BresPO{24}=\left\{\res{X_1},\res{X_2},\ldots,\res{X_{17}}\right\}$, where the $17$ representatives $X_i$ are given in Table~\ref{tab7}. Note that $X_{16}=Y_{1}^{2}$ and $X_{17}=Y_{2}^{2}$. Therefore the orbits $\orb{X_{16}^\infty}$ and $\orb{X_{17}^\infty}$ correspond to $\orb{Y_{1}^\infty}$ and $\orb{Y_{2}^\infty}$, respectively. A representation of the orbits generated from the elements of $\BresPO{12}$ and $\BresPO{24}$ can be found in Appendix~\ref{app1}.

\begin{table}[htbp]
\begin{center}
\begin{tabular}{cc}
\begin{tabular}{|c||c|}
\hline
$i$ & $X_i$ \\
\hline
$1$ & $000000000110101101101011$ \\
\hline
$2$ & $000000001001011110010111$ \\
\hline
$3$ & $000000010010011000100111$ \\
\hline
$4$ & $000000010110111001101111$ \\
\hline
$5$ & $000000010111110001111101$ \\
\hline
$6$ & $000000100001111100011101$ \\
\hline
$7$ & $000000100010100100101011$ \\
\hline
$8$ & $000000100011101100111001$ \\
\hline
$9$ & $000000101000111110001101$ \\
\hline
\end{tabular}
&
\begin{tabular}{|c||c|}
\hline
$i$ & $X_i$ \\
\hline
$10$ & $000000101001110110011111$ \\
\hline
$11$ & $000001000010010100100001$ \\
\hline
$12$ & $000001000101101101011111$ \\
\hline
$13$ & $000001001000001110000111$ \\
\hline
$14$ & $000001001111110111111001$ \\
\hline
$15$ & $000001100001100100011111$ \\
\hline
$16$ & $000001110111000001110111$ \\
\hline
$17$ & $000101000101000101000101$ \\
\hline
 & \\
\hline 
\end{tabular} \\
\end{tabular}
\caption{The representatives $X_i$ of $\BresPO{24}=\left\{\res{X_1},\res{X_2},\ldots,\res{X_{17}}\right\}$}\label{tab7}
\end{center}
\end{table}

\section{Periodic balanced triangles}\label{sec:5}

In this section we will prove Theorem~\ref{thm1}, the main result of this paper.

Let $X$ be a $p$-tuple of $\Zn{2}$, with $p$ divisible by $4$, such that $\res{X}$ is in $\BresPO{p}$ and let $S:=X^\infty$. Now, for each remainder $r\in\{0,1,\ldots,p-1\}$ and for each position $(i_0,j_0)\in{\{0,1,\ldots,p-1\}}^2$, we test if the blocks $\ST{S}(i_0,j_0,r)$ and $\ST{S}(i_0,j_0,p+r)\setminus\ST{S}(i_0,j_0,r)$ are balanced. If this is the case, we know from Proposition~\ref{prop2} that the Steinhaus triangles $\ST{S}(i_0,j_0,kp+r)$ are balanced for all non-negative integers $k$.

Let $\remaind{\res{X}}$ denote the set of remainders $r\in\{0,1,\ldots,p-1\}$ for which there exists a position $(i_0,j_0)\in{\{0,1,\ldots,p-1\}}^2$ such that the Steinhaus triangles $\ST{S}(i_0,j_0,kp+r)$ are balanced for all non-negative integers $k$.

From Table~\ref{tab3}, the first values of $p$, divisible by $4$, for which $\BresPO{p}\neq\emptyset$ are $12$ and $24$. For $p=12$, we find that $\remaind{\res{Y_i}}=\emptyset$ for each of the two equivalence classes $\BresPO{12} = \left\{\res{Y_1},\res{Y_2}\right\}$. For $p=24$, we find that $\remaind{\res{X_i}}\neq\emptyset$ for the first $15$ of the $17$ equivalence classes of $\BresPO{24}=\left\{\res{X_1},\res{X_2},\ldots,\res{X_{17}}\right\}$. Note that the two equivalence classes $\res{X}$ of $\BresPO{24}$ such that $\remaind{\res{X}}=\emptyset$ are exactly of the form $X=Y^2$ with $\res{Y}\in\BresPO{12}$ (as already seen $X_{16}=Y_{1}^{2}$ and $X_{17}=Y_{2}^{2}$). More precisely, Table~\ref{tab4} gives the exact number of remainders constituting $\remaind{\res{X}}$ for each $\res{X}\in\BresPO{24}$.

\begin{table}[htbp]
\begin{center}
{\tabulinesep=1.2mm
\begin{tabu}{|c||c||l|}
\hline
$i$ & $X_i$ & $|\remaind{\res{X_i}}|$ \\
\hline
$1$ & $000000000110101101101011$ & $18$ \\
\hline
$2$ & $000000001001011110010111$ & $16$ \\
\hline
$3$ & $000000010010011000100111$ & $23$ \\
\hline
${\bf 4}$ & ${\bf 000000010110111001101111}$ & ${\bf 24}$ \\
\hline
$5$ & $000000010111110001111101$ & $17$ \\
\hline
${\bf 6}$ & ${\bf 000000100001111100011101}$ & ${\bf 24}$ \\
\hline
${\bf 7}$ & ${\bf 000000100010100100101011}$ & ${\bf 24}$ \\
\hline
${\bf 8}$ & ${\bf 000000100011101100111001}$ & ${\bf 24}$ \\
\hline
${\bf 9}$ & ${\bf 000000101000111110001101}$ & ${\bf 24}$ \\
\hline
$10$ & $000000101001110110011111$ & $23$ \\
\hline
${\bf 11}$ & ${\bf 000001000010010100100001}$ & ${\bf 24}$ \\
\hline
$12$ & $000001000101101101011111$ & $23$ \\
\hline
$13$ & $000001001000001110000111$ & $20$ \\
\hline
$14$ & $000001001111110111111001$ & $20$ \\
\hline
$15$ & $000001100001100100011111$ & $23$ \\
\hline
$16$ & $000001110111000001110111$ & $0$ \\
\hline
$17$ & $000101000101000101000101$ & $0$ \\
\hline
\end{tabu}}
\caption{$|\remaind{\res{X}}|$ for all $\res{X}\in\BresPO{24}$}\label{tab4}
\end{center}
\end{table}

For six equivalence classes $\res{X}$ of $\BresPO{24}$, we find that $|\remaind{\res{X}}|=24$ and thus, from these $24$-tuples, we obtain the proof of Theorem~\ref{thm1} for Steinhaus triangles, i.e., there exist periodic orbits containing balanced Steinhaus triangles of size $n$ for all $n\ge 1$.

For instance, in the orbit $\orb{X_9^\infty}$ associated with the $24$-tuple $X_9=000000101000111110001101$, the existence of balanced Steinhaus triangles for all the possible sizes can be obtained from at least $4$ positions. Table~\ref{tab5} gives positions $(i_0,j_0)$ in the orbit $\orb{X_9^\infty}$ for which the Steinhaus triangles $\ST{X_9^\infty}(i_0,j_0,24k+r)$ are balanced for all non-negative integers~$k$ and the corresponding $24$-tuples $Z$ such that $\ST{Z^\infty[24k+r]}=\ST{X_9^\infty}(i_0,j_0,24k+r)$.

\begin{table}[htbp]
\begin{center}
\begin{tabular}{|c||c||c|}
\hline
$r$ & $(i_0,j_0)$ & $Z$ \\
\hline
$0 , 4 , 7 , 8 , 12 , 13 , 15 , 16 , 21, 22 , 23$ & $(1,11)$ & $010000100101110000011110$ \\
\hline
$1 , 2 , 3 , 5 , 10 , 17 , 18 , 19 , 20 , 21$ & $(1,6)$ & $111100100001001011100000$ \\
\hline
$0 , 1 , 6 , 9 , 14 , 22 , 23$ & $(6,9)$ & $000111010101000101001100$ \\
\hline
$11$ & $(3,3)$ & $000110011101001011001011$ \\
\hline
\end{tabular}
\caption{Balanced Steinhaus triangles $\ST{X_9^\infty}(i_0,j_0,24k+r)=\ST{Z^\infty[24k+r]}$}\label{tab5}
\end{center}
\end{table}

The family of balanced Steinhaus triangles $\ST{X_9^\infty}(6,9,24k+6)$, appearing in the orbit $\orb{X_9^\infty}$ for $X_9=000000101000111110001101$, is depicted in Figure~\ref{fig12}, where empty and full squares correspond to $0$ and $1$ respectively. Indeed, we can verify that the blocks $T_0:=\ST{X_9^\infty}(6,9,6)$, $T_1\setminus T_0:=\ST{X_9^\infty}(6,9,30)\setminus\ST{X_9^\infty}(6,9,6)$ and the period $P$ are balanced, since their multiplicity functions, given in Table~\ref{tab6}, are constant or almost constant.

\begin{table}[htbp]
\begin{center}
\begin{tabular}{|c|c|c|c|}
\hline
$x$ & $\m_{T_0}(x)$ & $\m_{T_1\setminus T_0}(x)$ & $\m_P(x)$ \\
\hline
$0$ & $11$ &$222$ & $288$ \\
\hline
$1$ & $10$ & $222$ & $288$ \\
\hline 
\end{tabular}
\caption{The multiplicity functions of $T_0$, $T_1\setminus T_0$ and $P$}\label{tab6}
\end{center}
\end{table}

\begin{figure}[htbp]
\begin{center}
\includegraphics[width=\textwidth]{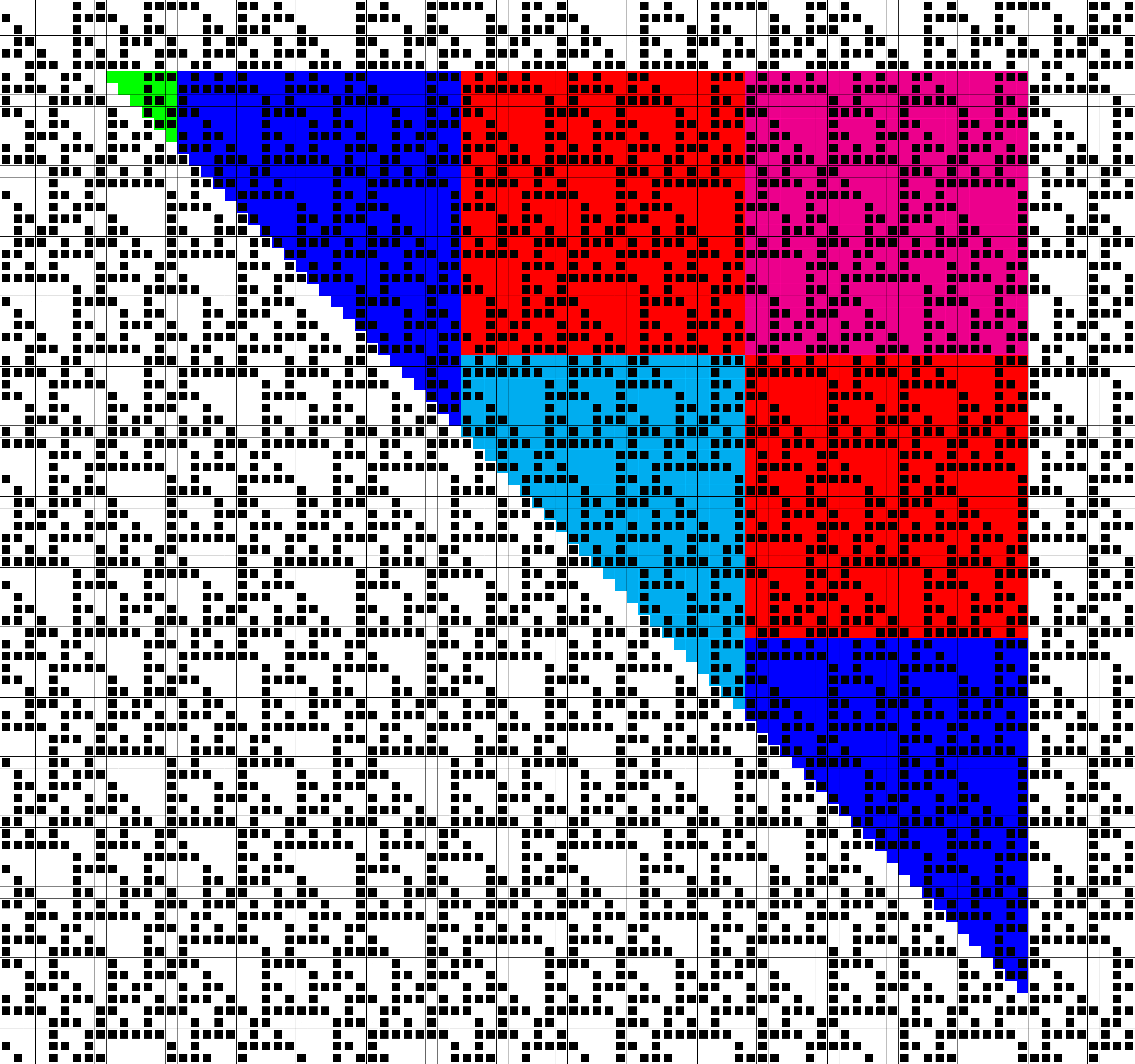}
\caption{The balanced Steinhaus triangles $\ST{X_9^\infty}(6,9,24k+6)$}\label{fig12}
\end{center}
\end{figure}

The following proposition concludes the proof of Theorem~\ref{thm1} by showing that in an orbit $\orb{X^\infty}$ generated from a $p$-tuple $X$ such that $\res{X}\in\BresPO{p}$, the existence of balanced Steinhaus triangles implies that of balanced generalized Pascal triangles.

\begin{prop}\label{prop3}
Let $S=X^\infty$ with $\res{X}\in\BresPO{p}$, $(i_0,j_0)\in\N\times\Z$, $r\in\{0,1,\ldots,p-1\}$ and $p$ divisible by $4$. Then, the Steinhaus triangles $\ST{S}(i_0,j_0,kp+r)$ are balanced for all non-negative integers $k$ if and only if the generalized Pascal triangles $\PT{S}(i_0+r+1,j_0+r,kp+(p-1-r))$ are balanced for all non-negative integers $k$.
\end{prop}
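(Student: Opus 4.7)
The plan is to mirror Proposition~\ref{prop2} for the Pascal family and then relate the two ``batch increments'' through a multiplicity identity living in the fundamental period.

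I would first establish a Pascal analogue of Proposition~\ref{prop2}. Writing $U_k := \PT{S}(i_0+r+1,\,j_0+r,\,kp+p-1-r)$ and $r' := p-1-r$, a block decomposition of the growing Pascal triangles into $p\times p$ periods together with, for each added batch of $p$ rows at the bottom, one $p\times r'$ rectangle and one Pascal of size $p$ gives the multiset identity
$$
U_k \;=\; U_0 \;+\; k\,(U_1 \setminus U_0) \;+\; \binom{k}{2}\, P.
$$
Arguing exactly as in Proposition~\ref{prop2} then yields that $\{U_k\}_{k\ge 0}$ consists of balanced triangles for every $k$ if and only if each of $P$, $U_1 \setminus U_0$, and $U_0$ is balanced.

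The heart of the proof is the multiplicity identity
$$
\m_{T_1 \setminus T_0} \;=\; \m_P + \m_{T_0} - \m_{U_0}, \qquad
\m_{U_1 \setminus U_0} \;=\; \m_P + \m_{U_0} - \m_{T_0}.
$$
To establish this I would fold both multisets back into the fundamental period $P_{i_0,j_0}$ using the $p$-periodicity of $\orb{S}$ in both indices, and, for each cell $(\alpha,\beta)$ with $0 \le \alpha,\beta \le p-1$, count how many preimages of that cell lie in $T_1 \setminus T_0$ and in $U_1 \setminus U_0$. Splitting the ranges of $\alpha$ and $\beta$ into the three intervals $[0,r-1]$, $[r,p-2]$, $\{p-1\}$, and further by the sign of $\alpha-\beta$, one checks case by case that each cell of $P$ contributes multiplicity $1$ to $T_1 \setminus T_0$ except for cells of $T_0$ (contributing $2$) and cells of $U_0$ (contributing $0$); the identity for $U_1 \setminus U_0$ is symmetric, with the roles of $T_0$ and $U_0$ swapped. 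Executing this case analysis cleanly is the main technical obstacle.

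With the identity the proposition follows immediately. Since $\res{X}\in\BresPO{p}$ forces $\m_P(0)=\m_P(1)$, we obtain
$$
\m_{T_1\setminus T_0}(0)-\m_{T_1\setminus T_0}(1) \;=\; \bigl(\m_{T_0}(0)-\m_{T_0}(1)\bigr) - \bigl(\m_{U_0}(0)-\m_{U_0}(1)\bigr),
$$
and the analogous equation for $U_1 \setminus U_0$ is the negative of this. Hence $T_1\setminus T_0$ is balanced if and only if $U_1\setminus U_0$ is balanced, and whenever either holds we have $\m_{T_0}(0)-\m_{T_0}(1)=\m_{U_0}(0)-\m_{U_0}(1)$, so $T_0$ is balanced if and only if $U_0$ is balanced. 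Combined with Proposition~\ref{prop2} and the Pascal analogue above, this gives the claimed equivalence.
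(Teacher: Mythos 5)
Your proposal is correct and follows essentially the same route as the paper: reduce via Proposition~\ref{prop2} and its Pascal analogue to the elementary blocks, then exploit the fact that each Steinhaus block and the corresponding Pascal block are complementary inside one $p\times p$ period, which is exactly your multiplicity identity $\m_{T_1\setminus T_0}=\m_P+\m_{T_0}-\m_{U_0}$ rewritten. The case analysis you flag as the main technical obstacle is unnecessary: the paper obtains the identity at a glance from the geometric tiling $U_0\cup V_0=U_1\cup V_1=P$ (the Steinhaus staircase and the Pascal staircase of complementary sizes $r$ and $p-1-r$ partition a $p\times p$ square), so no cell-by-cell folding argument is needed.
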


\begin{proof}
As depicted in Figure~\ref{fig9}, we consider in the orbit $\orb{S}=(a_{i,j})_{(i,j)\in\N\times\Z}$ the elementary blocks
$$
\begin{array}{l}
U_0 = \ST{S}(i_0,j_0,r), \\[1.25ex]
U_1 = \ST{S}(i_0,j_0,p+r)\setminus(\ST{S}(i_0,j_0,r)\cup\ST{S}(i_0+p,j_0+p,r)), \\[1.25ex]
V_1 = \PT{S}(i_0',j_0',r'), \\[1.25ex]
V_0 = \PT{S}(i_0',j_0',p+r')\setminus(\PT{S}(i_0',j_0',r')\cup\PT{S}(i_0'+p,j_0'+p,r')), \\
\end{array}
$$
where $i_0'=i_0+r+1$, $j_0'=j_0+r$ and $r'=p-1-r$, and the period
$$
P = \left\{ a_{p+r+i,j}\ \middle|\ i,j\in\{0,1,\ldots,p-1\} \right\}.
$$
Since $\orb{S}$ is $p$-periodic with $P$ balanced and $p$ divisible by $4$, we already know from Proposition~\ref{prop2} that the Steinhaus triangles $\ST{S}(i_0,j_0,kp+r)$ are balanced for all non-negative integers $k$ if and only if $U_0$ and $U_1\cup U_0$ are balanced.

\begin{figure}[htbp]
\begin{center}
\footnotesize
\begin{tikzpicture}[scale=0.125]
\foreach \x in {0,1,...,34} {
	\draw (\x,-\x) -- (\x,-\x-1) -- (\x+1,-\x-1);
}
\draw (0,0) -- (29,0);
\draw (5,-12) -- (29,-12);
\draw (5,-24) -- (29,-24);
\draw (5,-36) -- (35,-36);
\draw (5,0) -- (5,-36);
\draw (17,0) -- (17,-36);
\draw (29,0) -- (29,-36);
\draw (35,-35) -- (35,-36);

\draw[fill=\clr] (0,0) -- (0,-1) -- (1,-1) -- (1,-2) -- (2,-2) -- (2,-3) -- (3,-3) -- (3,-4) -- (4,-4) -- (4,-5) -- (5,-5) -- (5,0) -- (0,0);
\draw[fill=\clrd] (5,0) -- (5,-6) -- (6,-6) -- (6,-7) -- (7,-7) -- (7,-8) -- (8,-8) -- (8,-9) -- (9,-9) -- (9,-10) -- (10,-10) -- (10,-11) -- (11,-11) -- (11,-12) -- (17,-12) -- (17,0) -- (5,0);
\draw[fill=\clrdF] (5,-6) -- (6,-6) -- (6,-7) -- (7,-7) -- (7,-8) -- (8,-8) -- (8,-9) -- (9,-9) -- (9,-10) -- (10,-10) -- (10,-11) -- (11,-11) -- (11,-12) -- (5,-12) -- (5,-6);
\draw[fill=\clrt] (17,0) -- (29,0) -- (29,-12) -- (17,-12) -- (17,0);
\draw[fill=\clrF] (5,-12) -- (12,-12) -- (12,-13) -- (13,-13) -- (13,-14) -- (14,-14) -- (14,-15) -- (15,-15) -- (15,-16) -- (16,-16) -- (16,-17) -- (17,-17) -- (17,-24) -- (5,-24) -- (5,-12);
\draw[fill=\clr] (12,-12) -- (12,-13) -- (13,-13) -- (13,-14) -- (14,-14) -- (14,-15) -- (15,-15) -- (15,-16) -- (16,-16) -- (16,-17) -- (17,-17) -- (17,-12) -- (12,-12);
\draw[fill=\clrd] (17,-12) -- (17,-18) -- (18,-18) -- (18,-19) -- (19,-19) -- (19,-20) -- (20,-20) -- (20,-21) -- (21,-21) -- (21,-22) -- (22,-22) -- (22,-23) -- (23,-23) -- (23,-24) -- (29,-24) -- (29,-12) -- (17,-12);
\draw[fill=\clrdF] (17,-18) -- (18,-18) -- (18,-19) -- (19,-19) -- (19,-20) -- (20,-20) -- (20,-21) -- (21,-21) -- (21,-22) -- (22,-22) -- (22,-23) -- (23,-23) -- (23,-24) -- (17,-24) -- (17,-18);
\draw[fill=\clrt] (5,-24) -- (17,-24) -- (17,-36) -- (5,-36) -- (5,-24);
\draw[fill=\clrF] (17,-24) -- (24,-24) -- (24,-25) -- (25,-25) -- (25,-26) -- (26,-26) -- (26,-27) -- (27,-27) -- (27,-28) -- (28,-28) -- (28,-29) -- (29,-29) -- (29,-36) -- (17,-36) -- (17,-24);
\draw[fill=\clr] (24,-24) -- (24,-25) -- (25,-25) -- (25,-26) -- (26,-26) -- (26,-27) -- (27,-27) -- (27,-28) -- (28,-28) -- (28,-29) -- (29,-29) -- (29,-24) -- (24,-24);
\draw[fill=\clrdF] (29,-30) -- (30,-30) -- (30,-31) -- (31,-31) -- (31,-32) -- (32,-32) -- (32,-33) -- (33,-33) -- (33,-34) -- (34,-34) -- (34,-35) -- (35,-35) -- (35,-36) -- (29,-36) -- (29,-30);
\draw (3.5,-1.5) node {$U_0$};
\draw (15.5,-13.5) node {$U_0$};
\draw (27.5,-25.5) node {$U_0$};
\draw (11,-6) node {$U_1$};
\draw (23,-18) node {$U_1$};
\draw (23,-6) node {$P$};
\draw (6.5,-10.5) node {$V_1$};
\draw (18.5,-22.5) node {$V_1$};
\draw (30.5,-34.5) node {$V_1$};
\draw (11,-18) node {$V_0$};
\draw (23,-30) node {$V_0$};
\draw (11,-30) node {$P$};
\end{tikzpicture}
\caption{Decomposition of $\orb{S}$ into elementary blocks $U_0$, $U_1$, $V_0$ and $V_1$}\label{fig9}
\end{center}
\end{figure}

Similarly, the generalized Pascal triangles $\PT{S}(i_0+r+1,j_0+r,kp+p-1-r)$ are balanced for all non-negative integers $k$ if and only if $V_1$ and $V_0\cup V_1$ are balanced.

First, since
$$
U_0\cup V_0 = U_1\cup V_1 = P
$$
is balanced with an even cardinality, then we have $\m_{U_0\cup V_0}(x)=\m_{U_1\cup V_1}(x)=\frac{p^2}{2}$ for all $x\in\Zn{2}$. 
It follows that
$$
\m_{U_0\cup U_1}(x) = \m_{U_0}(x) + \m_{U_1}(x) = (p^2/2-\m_{V_0}(x)) + (p^2/2-\m_{V_1}(x)) = p^2-\m_{V_0\cup V_1}(x)
$$
for all $x\in\Zn{2}$. Therefore $\delta\m_{U_0\cup U_1}=\delta\m_{V_0\cup V_1}$. Moreover,
\begin{eqnarray}\label{eqn2}
\m_{U_0}(x) - \m_{V_1}(x) = (\m_{U_0}(x)+\m_{U_1}(x)) - (\m_{U_1}(x)+\m_{V_1}(x)) = \m_{U_0\cup U_1}(x) - p^2/2
\end{eqnarray}
for all $x\in\Zn{2}$. Since $p$ is divisible by $4$, we already know from Proposition~\ref{prop2} that $U_0\cup U_1$ has an even number of elements. Therefore, if $U_0\cup U_1$ is balanced, then $\delta_{U_0\cup U_1}=0$ and we deduce from \eqref{eqn2} that
$$
\m_{U_0}(1)-\m_{U_0}(0) = \m_{V_1}(1)-\m_{V_1}(0),
$$
that is $\delta_{U_0}=\delta_{V_1}$. Similarly, we can see that if $V_0\cup V_1$ is balanced, then $\delta_{U_0}=\delta_{V_1}$. Therefore the blocks $U_0$ and $U_1\cup U_0$ are balanced if and only if the blocks $V_1$ and $V_0\cup V_1$ are balanced. This concludes the proof.
\end{proof}

Using Proposition~\ref{prop3} and the families of balanced Steinhaus triangles appearing in the orbit $\orb{X_9^\infty}$ associated with the $24$-tuple $X_9=000000101000111110001101$ given in Table~\ref{tab5}, we obtain the existence of balanced generalized Pascal triangles for all the possible sizes. For all non-negative integers $k$, we know that the generalized Pascal triangle $\PT{X_9^\infty}(i_0,j_0,24k+r)$ is balanced for the values of Table~\ref{tab8} since the Steinhaus triangle $\ST{X_9^\infty}(i_0+r-p,j_0+r+1-p,24k+(23-r))$ is. The corresponding $24$-tuples $Z_l$ and $Z_r$ such that $\PT{X_9^\infty}(i_0,j_0,24k+r)=\PT{(Z_l^\infty[24k+r],Z_r^\infty[24k+r])}$ are also given in Table~\ref{tab8}.

\begin{table}[htbp]
\begin{center}
\rotatebox{90}{
\begin{tabular}{|c|c||c|c||c|c|}
\hline
$r$ & $(i_0,j_0)$ & $p-1-r$ & $(i_0+r-p,j_0+r+1-p)$ & $Z_l$ & $Z_r$ \\
\hline
$0$ & $(25,34)$ & $23$ & $(1,11)$ & $011110000101011000101110$ & $001100100000100000111010$ \\
\hline
$1$ & $(24,33)$ & $22$ & $(1,11)$ & $010001000111110100111001$ & $000110010000010000011101$ \\
\hline
$2$ & $(23,32)$ & $21$ & $(1,11)$ & $111001100100001110100101$ & $100011001000001000001110$ \\
\hline
$3$ & $(22,26)$ & $20$ & $(1,6)$ & $110001011100111100001010$ & $110011110010111011100001$ \\
\hline
$4$ & $(21,25)$ & $19$ & $(1,6)$ & $101001110010100010001111$ & $111001111001011101110000$ \\
\hline
$5$ & $(20,24)$ & $18$ & $(1,6)$ & $011101001011110011001000$ & $011100111100101110111000$ \\
\hline
$6$ & $(19,23)$ & $17$ & $(1,6)$ & $010011101110001010101100$ & $001110011110010111011100$ \\
\hline
$7$ & $(18,27)$ & $16$ & $(1,11)$ & $011001000011010001010000$ & $011101000110010000010000$ \\
\hline
$8$ & $(17,26)$ & $15$ & $(1,11)$ & $010101100010111001111000$ & $001110100011001000001000$ \\
\hline
$9$ & $(21,23)$ & $14$ & $(6,9)$ & $001110111000101010110001$ & $011000011000010011100101$ \\
\hline
$10$ & $(15,24)$ & $13$ & $(1,11)$ & $010000111010010111100110$ & $000011101000110010000010$ \\
\hline
$11$ & $(14,23)$ & $12$ & $(1,11)$ & $011000100111011100010101$ & $000001110100011001000001$ \\
\hline
$12$ & $(15,14)$ & $11$ & $(3,3)$ & $011111110100110100110010$ & $001010010111111101010110$ \\
\hline
$13$ & $(12,16)$ & $10$ & $(1,6)$ & $101111001100100001110100$ & $101110000111001111001011$ \\
\hline
$14$ & $(16,18)$ & $9$ & $(6,9)$ & $001111000010101100010111$ & $001010110000110000100111$ \\
\hline
$15$ & $(10,19)$ & $8$ & $(1,11)$ & $001101000101000001100100$ & $000100000111010001100100$ \\
\hline
$16$ & $(9,18)$ & $7$ & $(1,11)$ & $001011100111100001010110$ & $000010000011101000110010$ \\
\hline
$17$ & $(13,15)$ & $6$ & $(6,9)$ & $100010101011000100111011$ & $111001010110000110000100$ \\
\hline
$18$ & $(7,11)$ & $5$ & $(1,6)$ & $000011001000011010001010$ & $010111011100001110011110$ \\
\hline
$19$ & $(6,15)$ & $4$ & $(1,11)$ & $011101110001010101100010$ & $010000010000011101000110$ \\
\hline
$20$ & $(5,9)$ & $3$ & $(1,6)$ & $100011111010011100101000$ & $100101110111000011100111$ \\
\hline
$21$ & $(4,8)$ & $2$ & $(1,6)$ & $110010000111010010111100$ & $110010111011100001110011$ \\
\hline
$22$ & $(3,7)$ & $1$ & $(1,6)$ & $101011000100111011100010$ & $111001011101110000111001$ \\
\hline
$23$ & $(2,11)$ & $0$ & $(1,11)$ & $010100000110010000110100$ & $011001000001000001110100$ \\
\hline
\end{tabular}
}
\end{center}
\caption{Balanced generalized Pascal triangles $\PT{X_9^\infty}(i_0,j_0,24k+r)=\PT{(Z_l^\infty[24k+r],Z_r^\infty[24k+r])}$}\label{tab8}
\end{table}

Moreover, in the orbit $\orb{X_9^\infty}$ associated with the $24$-tuple $X_9=000000101000111110001101$, the existence of balanced generalized Pascal triangles for all the possible sizes can also be obtained from only $6$ positions. This result is not obtained by using Proposition~\ref{prop3} but by testing, at each position $(i_0,j_0)$ and for each remainder $r$, if the elementary blocks $V_1$ and $V_0\cup V_1$ are balanced, where $V_1=\PT{X_9^\infty}(i_0,j_0,r)$ and $V_0=\PT{X_9^\infty}(i_0,j_0,p+r)\setminus(\PT{X_9^\infty}(i_0,j_0,r)\cup\PT{X_9^\infty}(i_0+p,j_0+p,r))$. The corresponding values appear in Table~\ref{tab9}. 

\begin{table}[htbp]
\begin{center}
\begin{tabular}{|c||c||c|c|}
\hline
$r$ & $(i_0,j_0)$ & $Z_l$ & $Z_r$ \\
\hline
$1 , 7 , 15 , 23$ & $(0,9)$ & $010001000111110100111001$ & $000110010000010000011101$ \\
\hline
$4 , 5 , 12 , 13 , 21$ & $(7,22)$ & $011111110100110100110010$ & $001010010111111101010110$ \\
\hline
$3 , 6 , 14 , 19 , 22$ & $(7,4)$ & $011110011111101110000010$ & $010001001010001011100110$ \\
\hline
$2 , 10 , 18 , 20$ & $(4,15)$ & $100111011100010101011000$ & $110100010100011010010111$ \\
\hline
$0 , 8 , 16 , 22$ & $(1,2)$ & $001011100111100001010110$ & $000010000011101000110010$ \\
\hline
$1 , 3 , 9 , 11 , 17$ & $(6,7)$ & $011000100111011100010101$ & $000001110100011001000001$ \\
\hline
\end{tabular}
\end{center}
\caption{Other balanced triangles $\PT{X_9^\infty}(i_0,j_0,24k+r)=\PT{(Z_l^\infty[24k+r],Z_r^\infty[24k+r])}$}\label{tab9}
\end{table}

\section{Periodic balanced triangles modulo $m$}\label{sec:6}

The definitions of Steinhaus and generalized Pascal triangles can be extended in $\Zn{m}$ by considering the sum modulo $m$ as the local rule, instead of the sum modulo $2$. Examples of Steinhaus and generalized Pascal triangles modulo $7$ are depicted in Figure~\ref{fig10}.

\begin{figure}[htbp]
\begin{center}
\begin{tabular}{c@{\hspace{2cm}}c}
\begin{tikzpicture}[scale=0.25]
\pgfmathparse{sqrt(3)}\let\x\pgfmathresult

\node at (0,0) {$2$};
\node at (2,0) {$3$};
\node at (4,0) {$3$};
\node at (6,0) {$0$};
\node at (8,0) {$4$};
\node at (10,0) {$4$};
\node at (12,0) {$5$};

\node at (1,-\x) {$5$};
\node at (3,-\x) {$6$};
\node at (5,-\x) {$3$};
\node at (7,-\x) {$4$};
\node at (9,-\x) {$1$};
\node at (11,-\x) {$2$};

\node at (2,-2*\x) {$4$};
\node at (4,-2*\x) {$2$};
\node at (6,-2*\x) {$0$};
\node at (8,-2*\x) {$5$};
\node at (10,-2*\x) {$3$};

\node at (3,-3*\x) {$6$};
\node at (5,-3*\x) {$2$};
\node at (7,-3*\x) {$5$};
\node at (9,-3*\x) {$1$};

\node at (4,-4*\x) {$1$};
\node at (6,-4*\x) {$0$};
\node at (8,-4*\x) {$6$};

\node at (5,-5*\x) {$1$};
\node at (7,-5*\x) {$6$};

\node at (6,-6*\x) {$0$};

\draw (-1.5,0.5*\x) -- (13.5,0.5*\x) -- (6,-7*\x) -- (-1.5,0.5*\x);
\end{tikzpicture}
&
\begin{tikzpicture}[scale=0.25]
\pgfmathparse{sqrt(3)}\let\x\pgfmathresult

\node at (0,0) {$0$};

\node at (-1,-\x) {$1$};
\node at (1,-\x) {$6$};

\node at (-2,-2*\x) {$2$};
\node at (0,-2*\x) {$0$};
\node at (2,-2*\x) {$5$};

\node at (-3,-3*\x) {$1$};
\node at (-1,-3*\x) {$2$};
\node at (1,-3*\x) {$5$};
\node at (3,-3*\x) {$6$};

\node at (-4,-4*\x) {$5$};
\node at (-2,-4*\x) {$3$};
\node at (0,-4*\x) {$0$};
\node at (2,-4*\x) {$4$};
\node at (4,-4*\x) {$2$};

\node at (-5,-5*\x) {$3$};
\node at (-3,-5*\x) {$1$};
\node at (-1,-5*\x) {$3$};
\node at (1,-5*\x) {$4$};
\node at (3,-5*\x) {$6$};
\node at (5,-5*\x) {$4$};

\draw (0,\x) -- (-6.5,-5.5*\x) -- (6.5,-5.5*\x) -- (0,\x);
\end{tikzpicture} \\[2ex]
$\ST{(2330445)}$
&
$\PT{(012153,065624)}$ \\
\end{tabular}
\caption{Steinhaus and generalized Pascal triangles in $\Zn{7}$}\label{fig10}
\end{center}
\end{figure}

The triangle $T$ is said to be {\em balanced} if its multiplicity function is constant or almost constant, i.e., if
$$
\delta\m_T := \max\left\{|\m_T(x_1)-\m_T(x_2)|\ \middle|\ x_1,x_2\in\Zn{m}\right\}\in\{0,1\}.
$$
Note that, when the triangle $T$ is of size $n$ such that the triangular number ${n+1\choose 2}$ is divisible by $m$, the triangle $T$ is balanced if $\delta\m_T = 0$, i.e., if the multiplicity function $\m_T$ is constant, equal to $\frac{1}{m}{n+1\choose 2}$. For example, the triangles in Figure~\ref{fig10}, $\ST{(2330445)}$ and $\PT{(012153,065624)}$, are balanced in $\Zn{7}$ since they contain all the elements of $\Zn{7}$ with the same multiplicity.

This generalization was introduced in \citet{Molluzzo:1978aa}, where the author posed the following problem.

\begin{MollProb}
Does there exist, for any positive integers $m$ and $n$ such that the triangular number ${n+1\choose 2}$ is divisible by $m$, a balanced Steinhaus triangle modulo $m$ of size $n$?
\end{MollProb}

This problem is still largely open. It is positively solved only for $m=2$ (Steinhaus Problem), $4$ \citep{Chappelon:2012aa}, $5$, $7$ \citep{Chappelon:2008aa} and for all $m=3^k$ with $k\in\N$ \citep{Chappelon:2008aa,Chappelon:2008ab}. It is also known \citep{Chappelon:2008aa} that there exist some values of $m$ and $n$ for which there do not exist balanced Steinhaus triangles: for $n=5$ and $m=15$ or $n=6$ and $m=21$.

In this section, some of these solutions are recalled because they involve balanced triangles that are also periodic.

First, in \citet{Chappelon:2008ab}, it was proved that, for any odd number $m$, the Steinhaus triangles generated from an arithmetic progression with an invertible common difference in $\Zn{m}$ and of length $n$ is balanced for all $n\equiv 0$ or $-1\bmod{\ord{m}{2^m}m}$, where $\ord{m}{2^m}$ is the multiplicative order of $2^m$ modulo $m$. For instance, for $(i \bmod{m})_{i\in\Z}$, the sequence of the integers modulo $m$, the Steinhaus triangle $\ST{(0,1,2,\ldots,n-1)}$ is balanced in $\Zn{m}$ for all $n\equiv 0$ or $-1\bmod{\ord{m}{2^m}m}$. In the proof of this result, it appears that the orbit generated from any arithmetic progression is periodic of period $\ord{m}{2^m}m$. This implies that all these balanced Steinhaus triangles modulo an odd number $m$ are also $\ord{m}{2^m}m$-periodic. Note that a generalization of this result in higher dimensions for balanced simplices can be found in \citet{Chappelon:2015aa} and these simplices also have periodic structure.

In \citet{Chappelon:2011aa}, the following integer sequence $S=(a_j)_{j\in\Z}$ defined by
$$
\left\{\begin{array}{ccc}
a_{3j} & = & j,\\
a_{3j+1} & = & -1-2j,\\
a_{3j+2} & = & 1+j,\\
\end{array}\right.
$$
for all $j\in\Z$, is considered. Note that this sequence is an interlacing of three arithmetic progressions. It was proved that, for every odd number $m$, the orbit of the projection of $S$ in $\Zn{m}$ contains balanced Steinhaus triangles of size $n$, for all $n$ divisible by $m$ and for all $n\equiv -1\bmod{3m}$, and balanced generalized Pascal triangles of size $n$, for all $n\equiv -1\bmod{n}$ and for all $n$ divisible by $3m$. It was proved in \citet{Chappelon:2011aa} that the orbit of this special sequence modulo $m$ is periodic of period $6m$. Thus, there exist periodic balanced triangles of these size modulo $m$ odd.

All these results lead to consider the following subproblem of the Molluzzo Problem.

\begin{prob}
Does there exist, for any positive integer $m$, infinitely many periodic balanced (Steinhaus or generalized Pascal) triangles modulo $m$?
\end{prob}

This problem is positively solved for any odd number $m$ \citep{Chappelon:2008ab,Chappelon:2011aa}, for $m=2$ (the present paper), for $m=4$ only for Steinhaus triangles \citep{Chappelon:2012aa} and for $m\in\{6,8,10\}$ \citep{Eliahou:aa}. It remains to analyse the case where $m$ is even and $m\ge 12$.

\acknowledgements
\label{sec:ack}
The author would like to thank the anonymous referees for the time spent reading this manuscript and for useful comments and remarks, which improved the presentation of the paper.
\nocite{*}
\bibliographystyle{abbrvnat}
\bibliography{biblio}
\label{sec:biblio}
\appendix
\section{The $24$-periodic orbits with balanced periods}\label{app1}

In this appendix, the orbits of representatives $X_i$, for all the $17$ elements of $\BresPO{24}=\left\{\res{X_1},\res{X_2},\ldots,\res{X_{17}}\right\}$, are given. Moreover, we have also obtained the orbits of the elements of $\BresPO{12}=\left\{\res{Y_1},\res{Y_2}\right\}$. Indeed, as already remarked, we have $\orb{Y_{1}^\infty}=\orb{X_{16}^\infty}$ and $\orb{Y_{2}^\infty}=\orb{X_{17}^\infty}$ since $X_{16}=Y_{1}^{2}$ and $X_{17}=Y_{2}^{2}$.
\begin{center}
{\tabulinesep=2mm
\begin{longtabu} to \textwidth {|X[1,c]|X[1,c]|}
\hline
 & \\*
\includegraphics[width=0.47\textwidth]{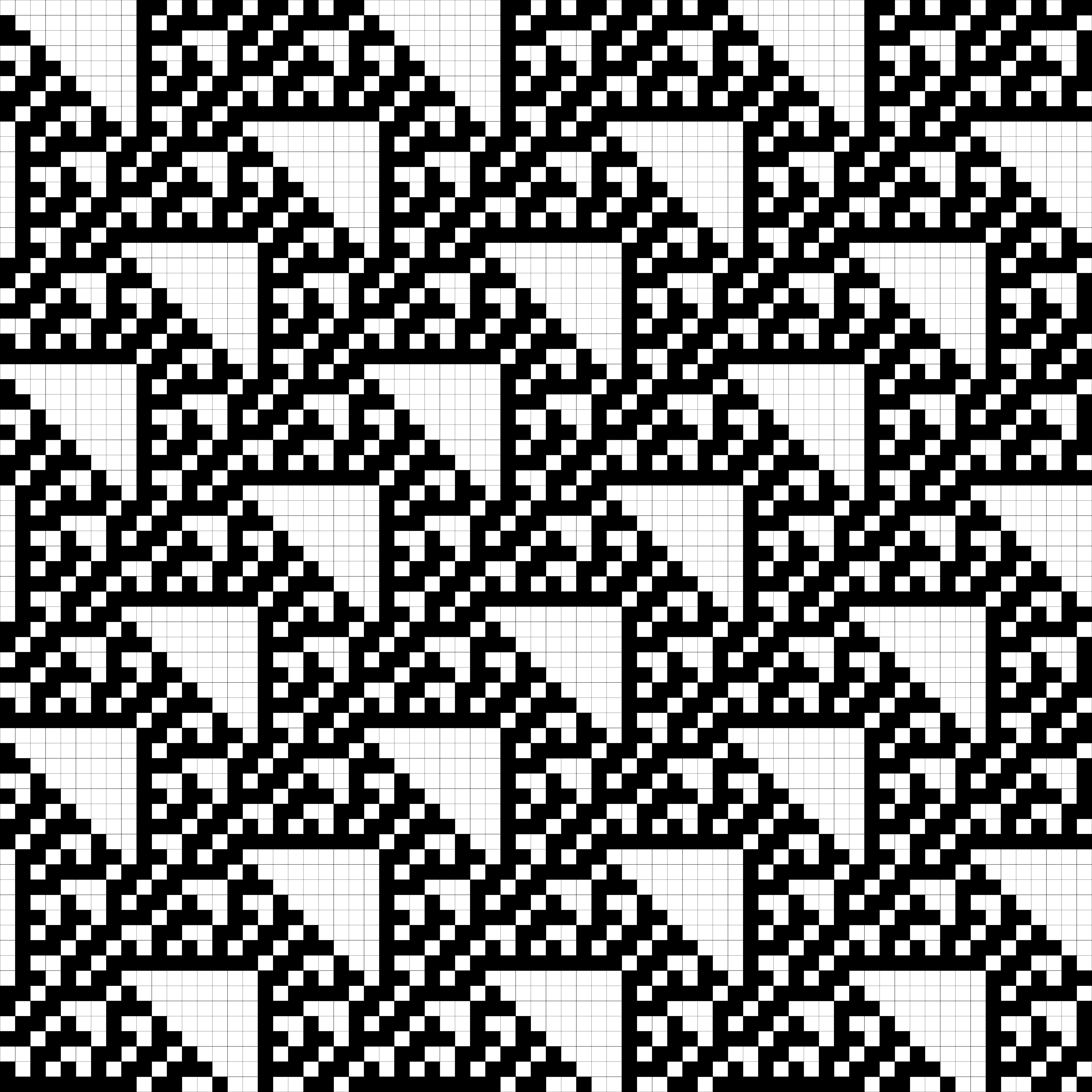} & \includegraphics[width=0.47\textwidth]{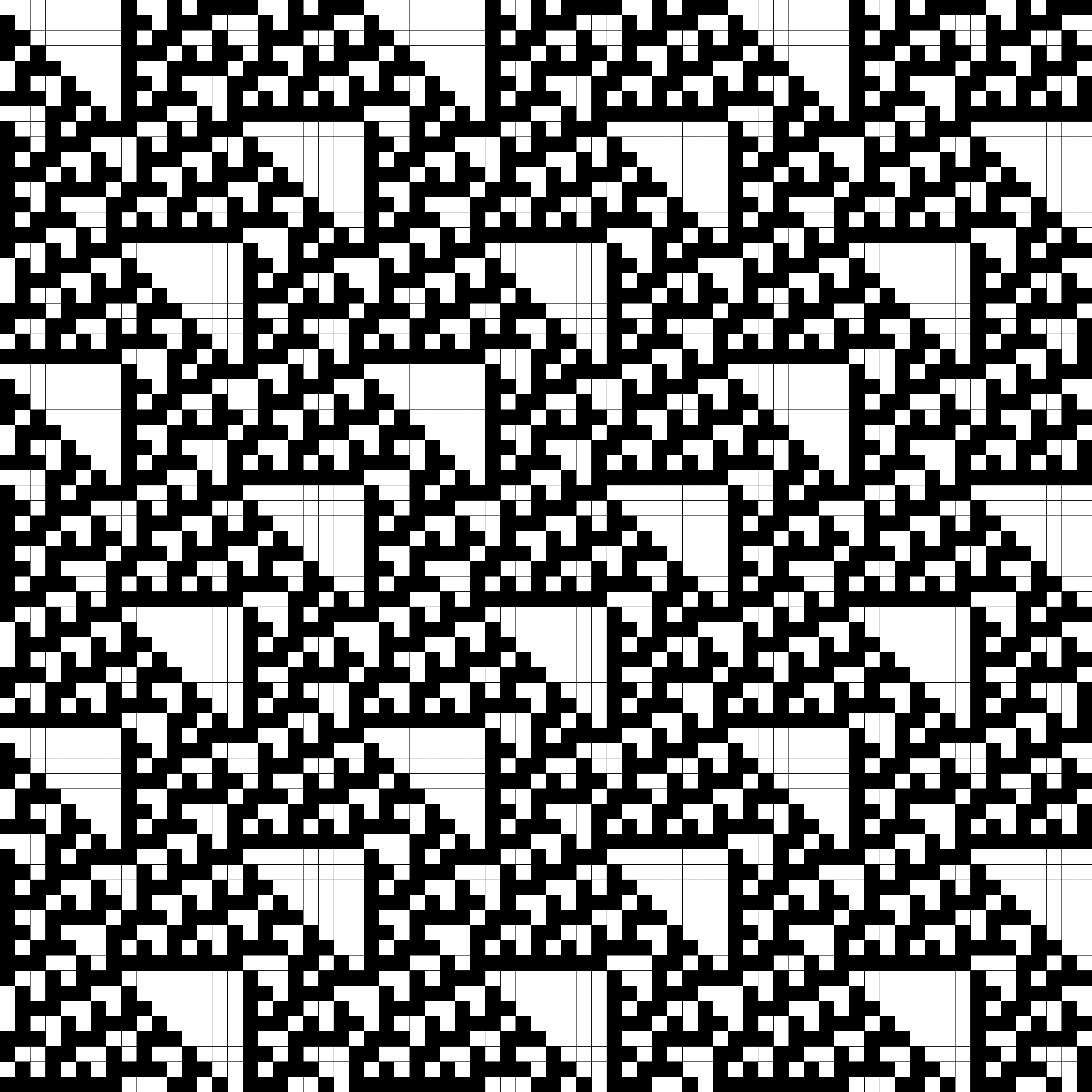} \\*
$\orb{X_1^\infty}$ & $\orb{X_2^\infty}$ \\
\hline
& \\*
\includegraphics[width=0.47\textwidth]{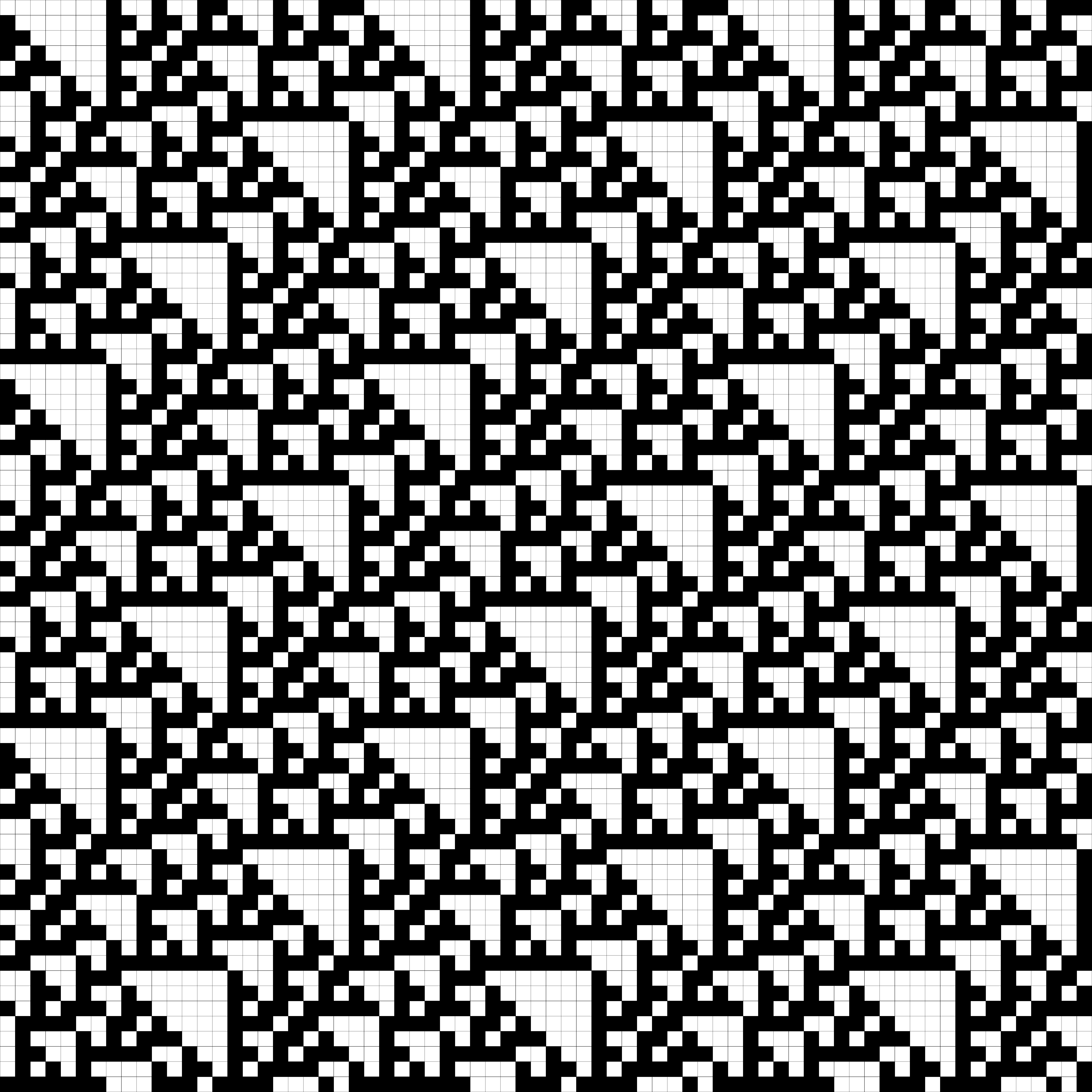} & \includegraphics[width=0.47\textwidth]{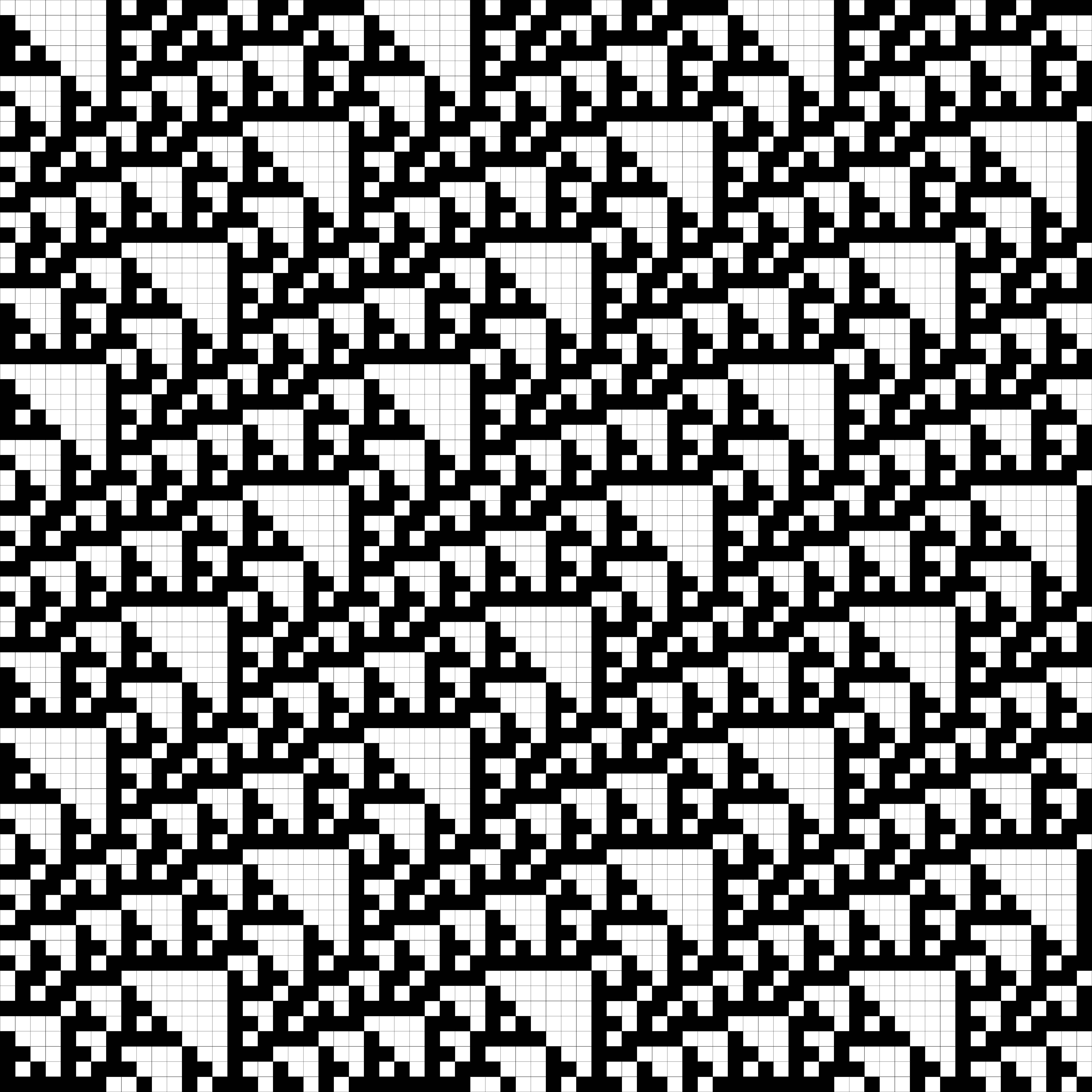} \\*
$\orb{X_3^\infty}$ & $\orb{X_4^\infty}$ \\
\hline
& \\*
\includegraphics[width=0.47\textwidth]{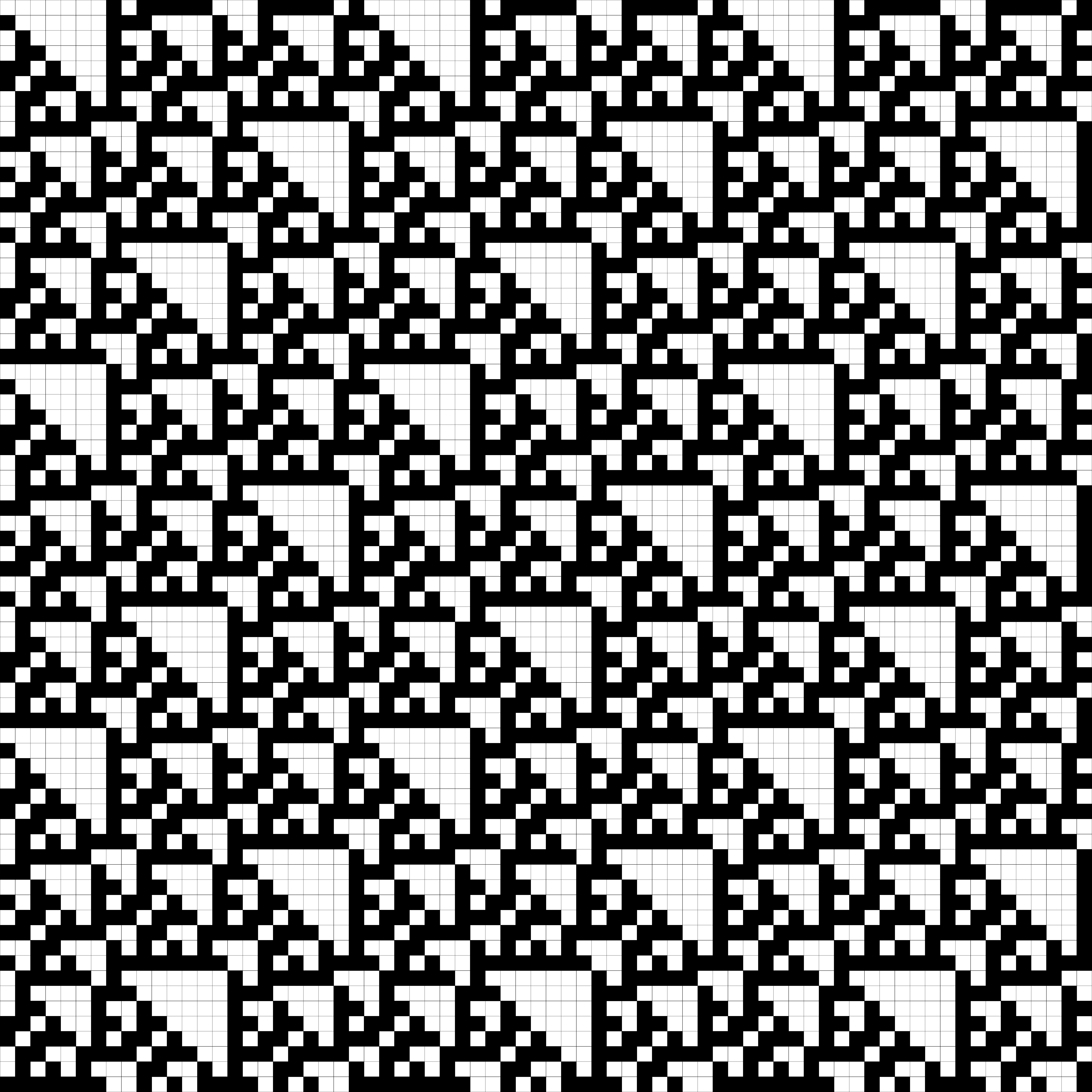} & \includegraphics[width=0.47\textwidth]{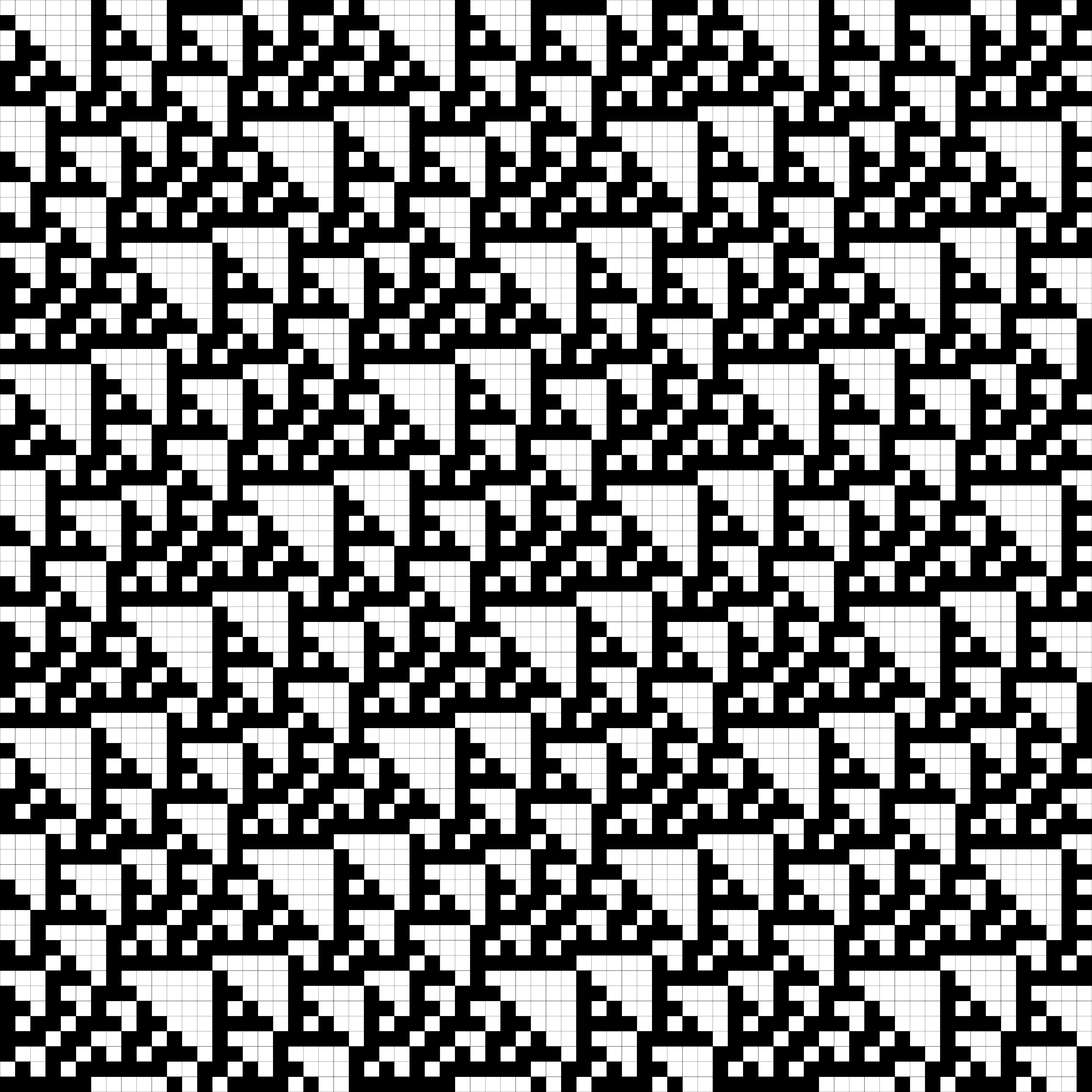} \\*
$\orb{X_5^\infty}$ & $\orb{X_6^\infty}$ \\
\hline
& \\*
\includegraphics[width=0.47\textwidth]{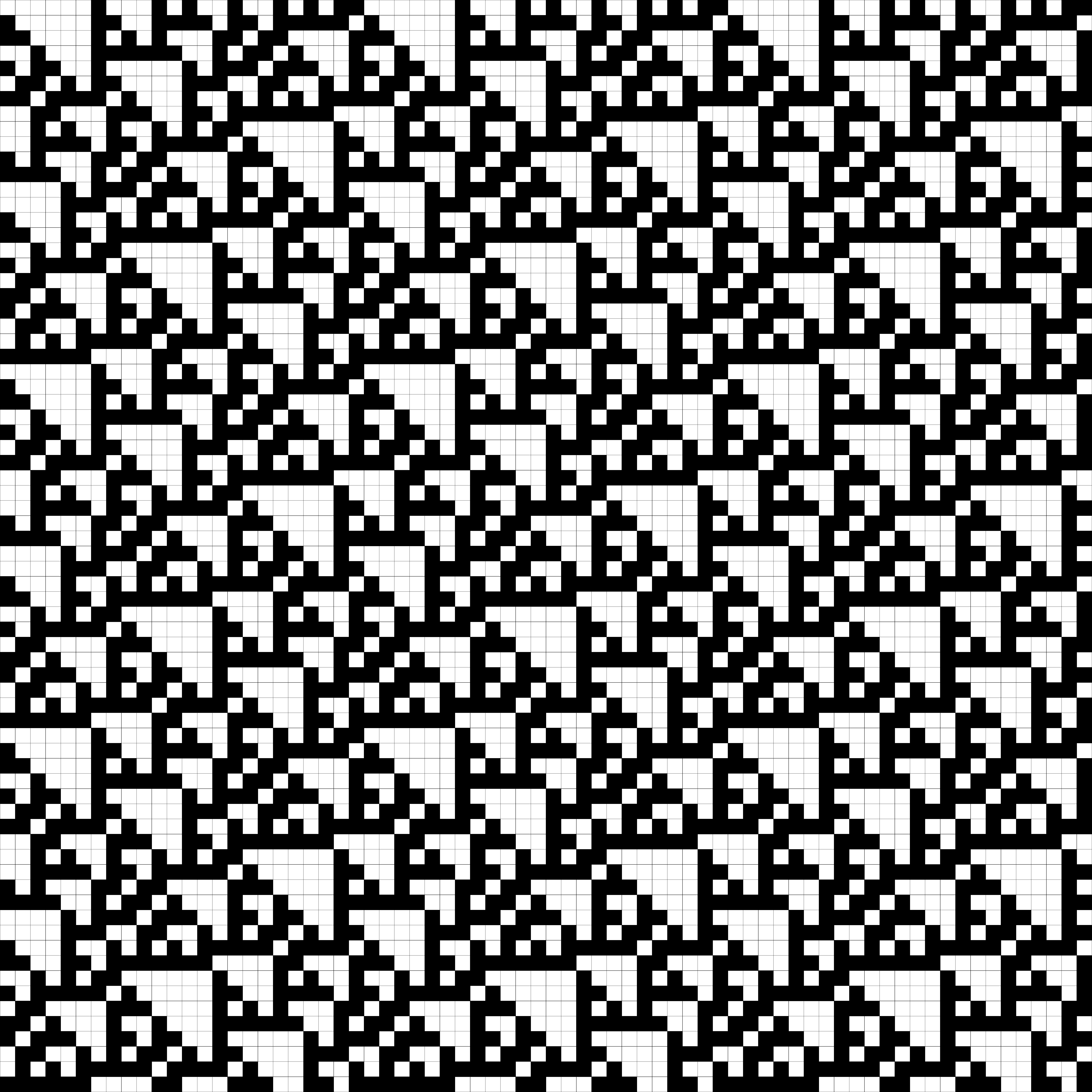} & \includegraphics[width=0.47\textwidth]{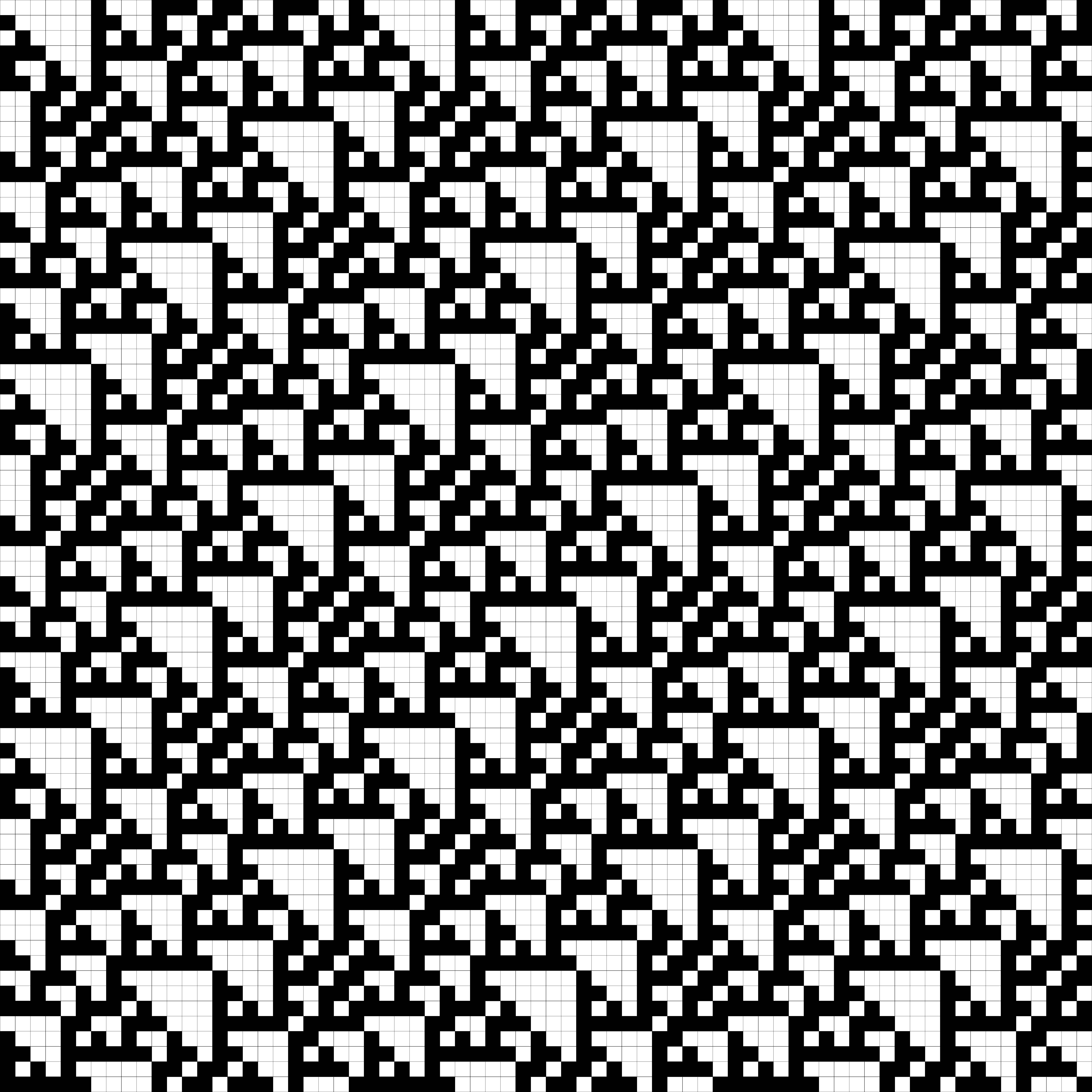} \\*
$\orb{X_7^\infty}$ & $\orb{X_8^\infty}$ \\
\hline
& \\*
\includegraphics[width=0.47\textwidth]{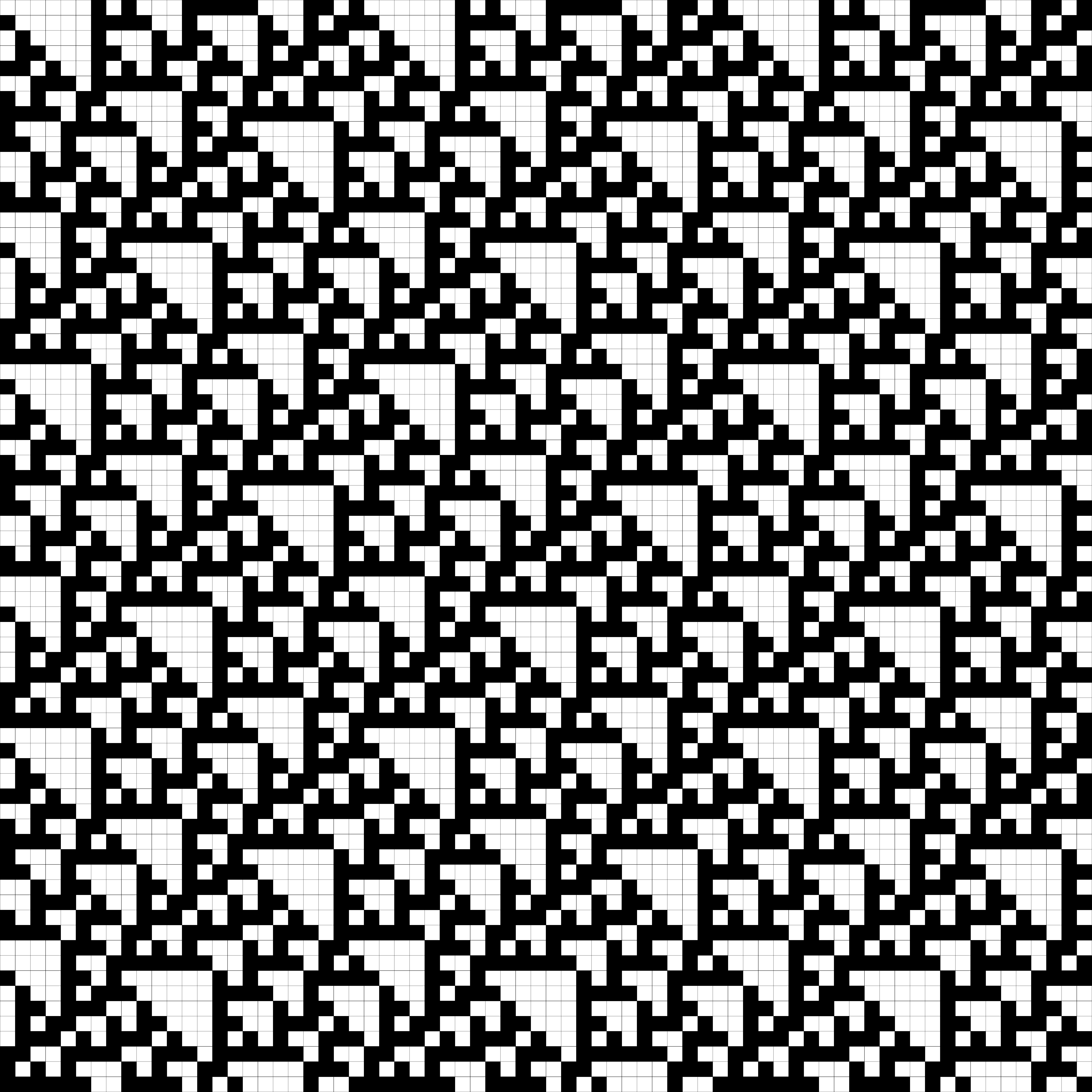} & \includegraphics[width=0.47\textwidth]{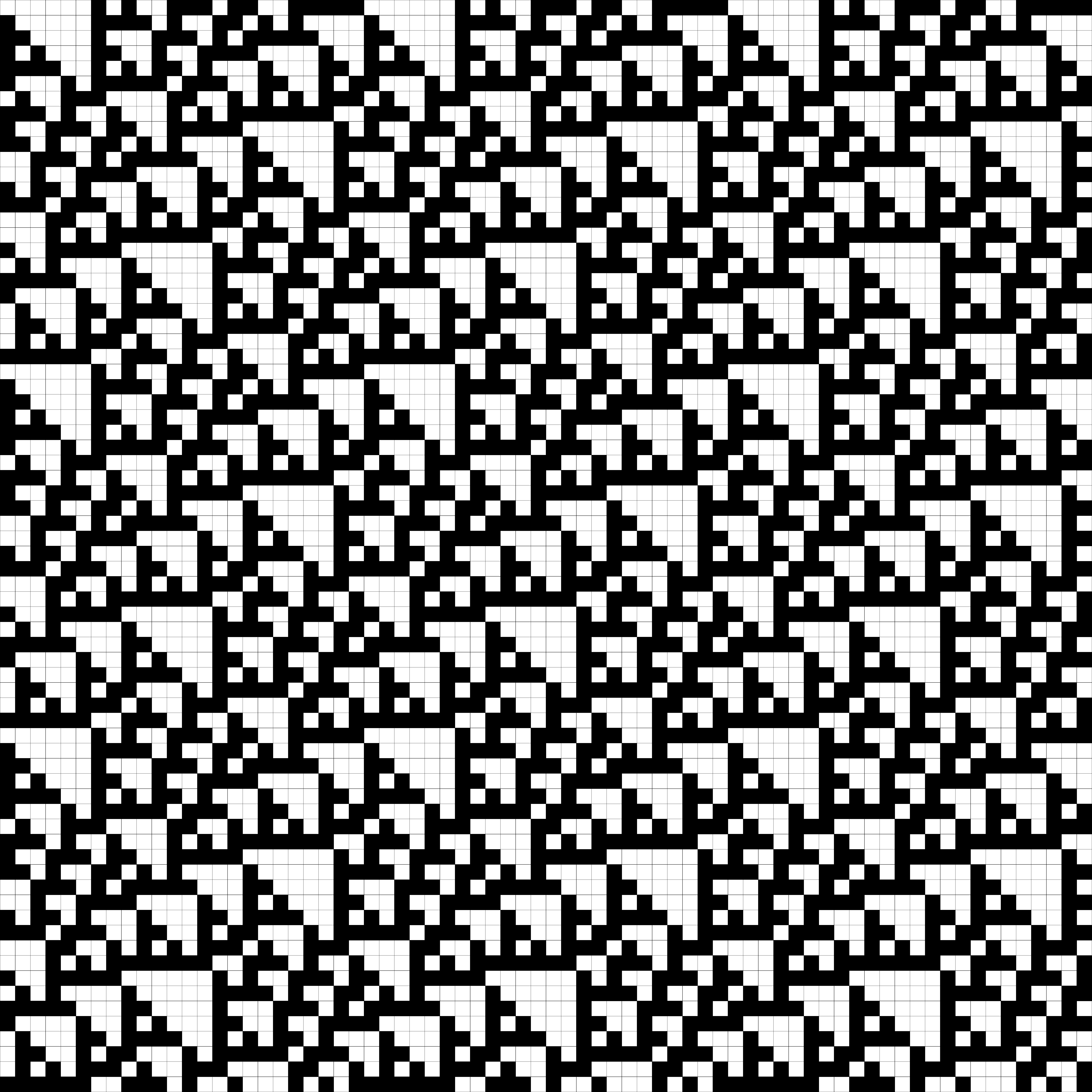} \\*
$\orb{X_9^\infty}$ & $\orb{X_{10}^\infty}$ \\
\hline
& \\*
\includegraphics[width=0.47\textwidth]{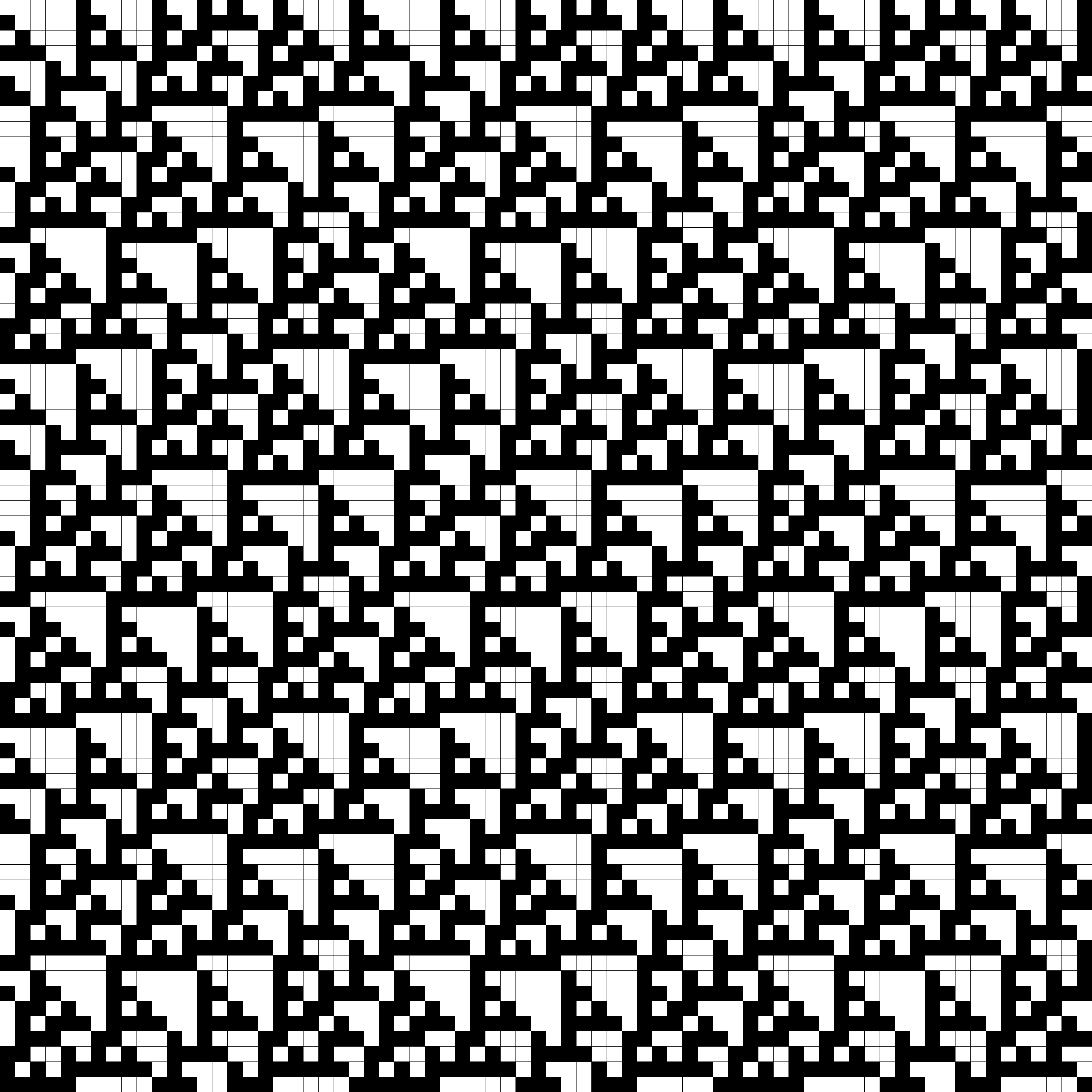} & \includegraphics[width=0.47\textwidth]{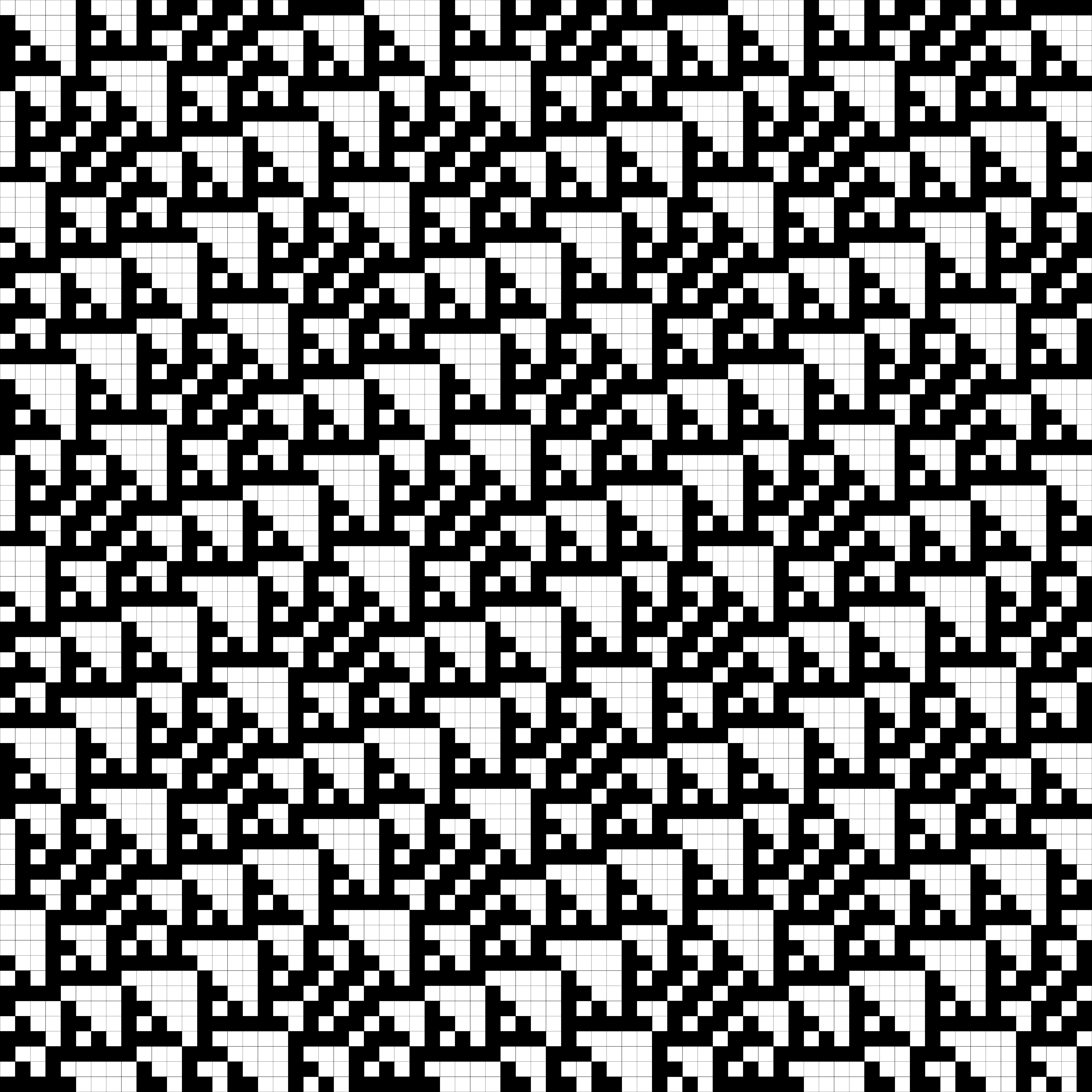} \\*
$\orb{X_{11}^\infty}$ & $\orb{X_{12}^\infty}$ \\
\hline
& \\*
\includegraphics[width=0.47\textwidth]{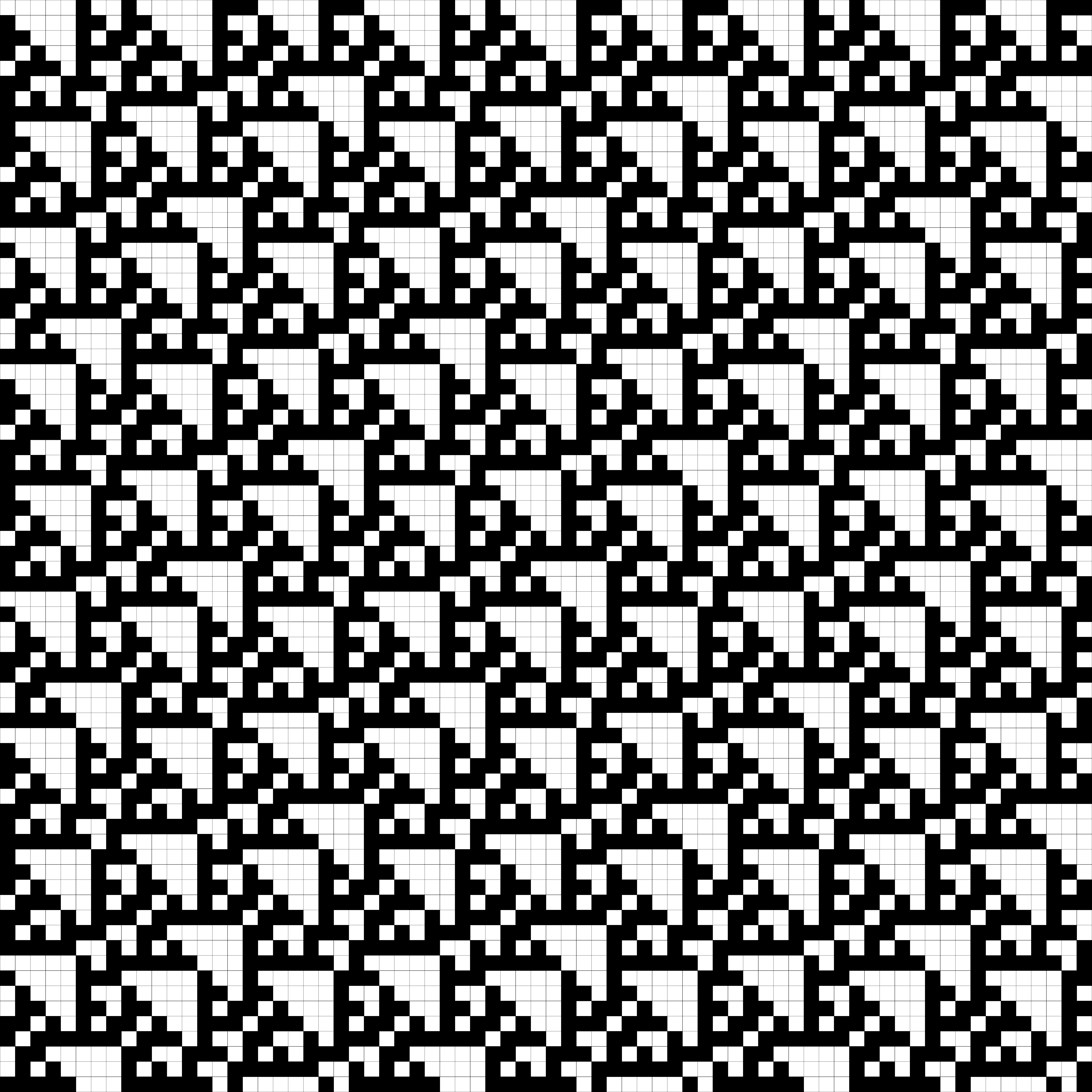} & \includegraphics[width=0.47\textwidth]{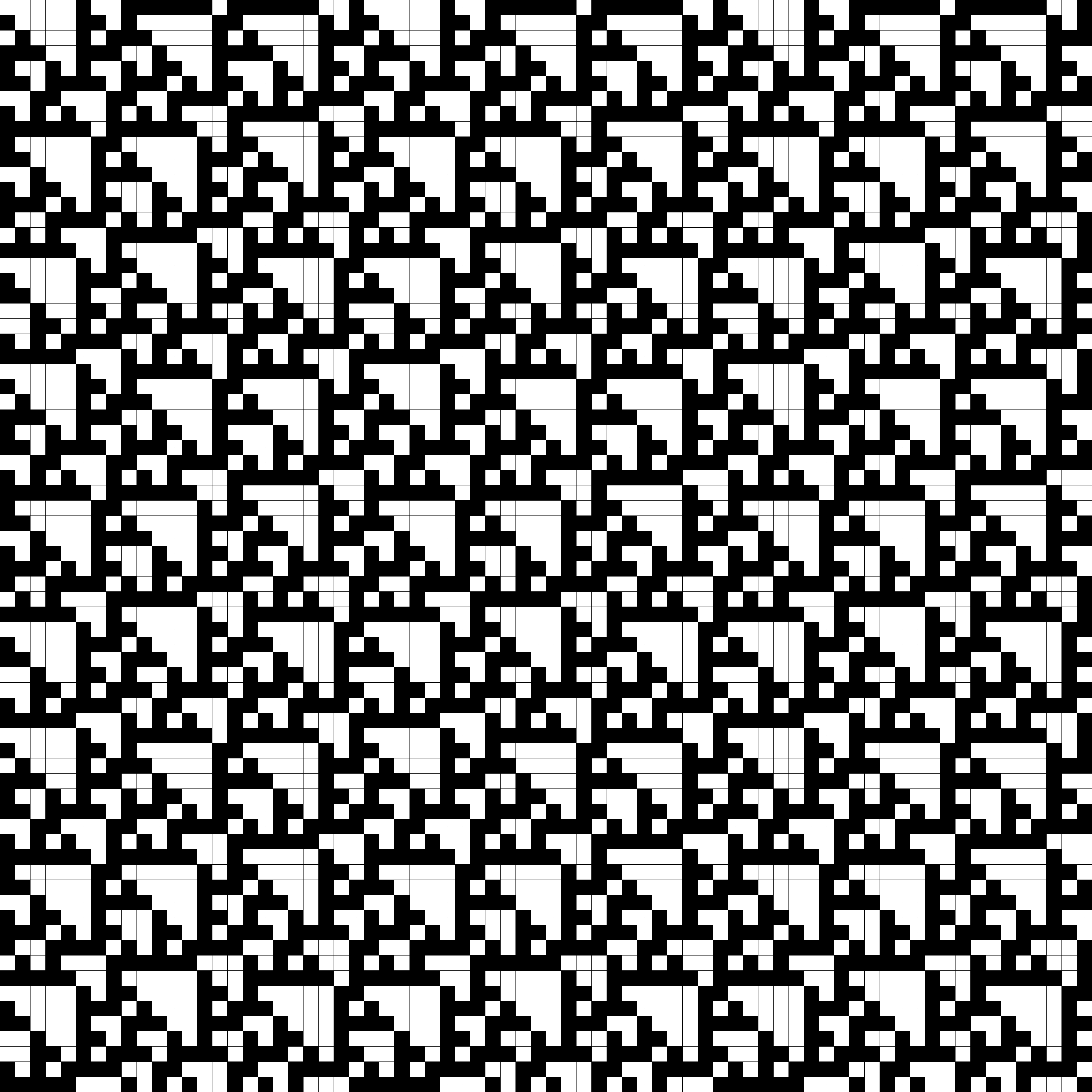} \\*
$\orb{X_{13}^\infty}$ & $\orb{X_{14}^\infty}$ \\
\hline
& \\*
\includegraphics[width=0.47\textwidth]{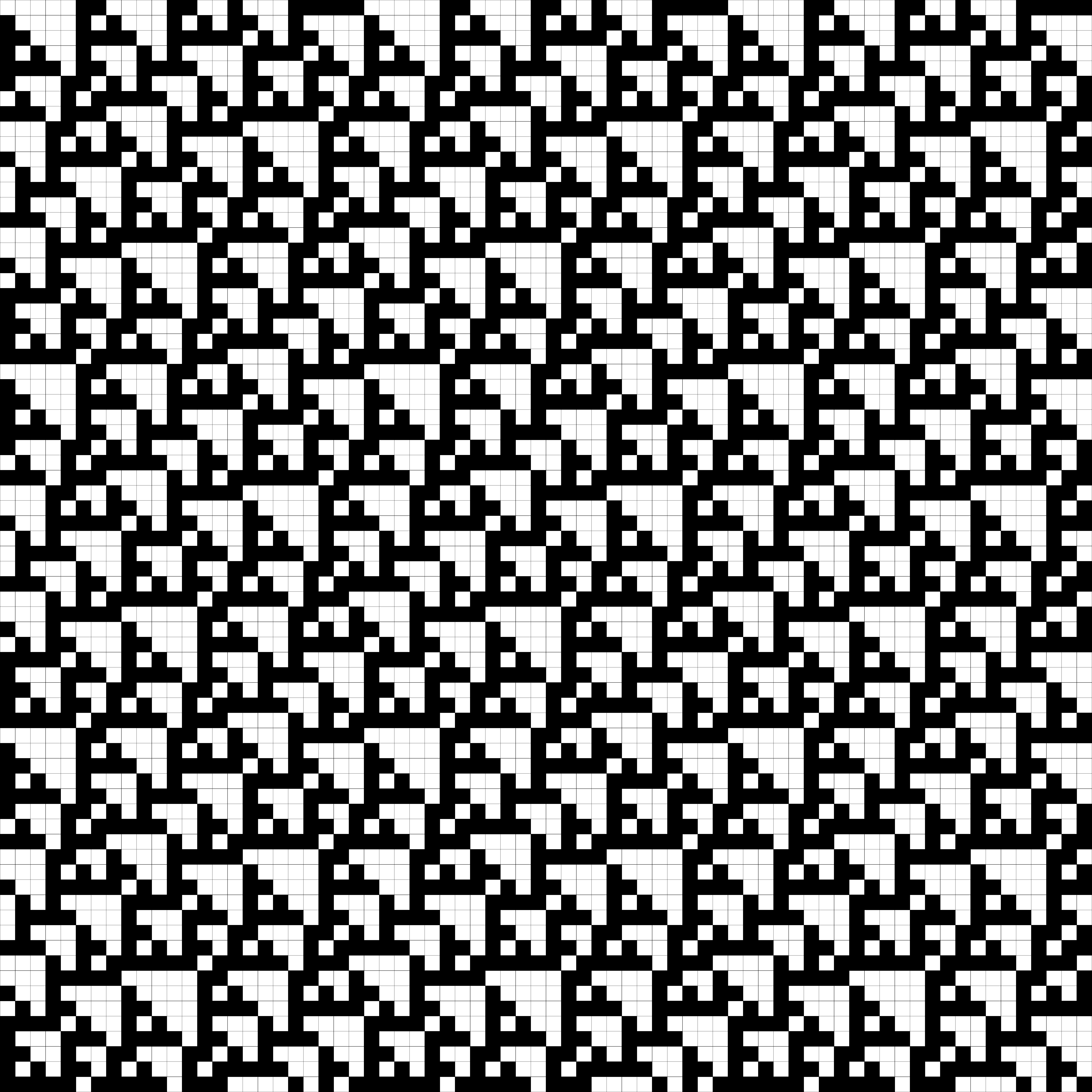} & \includegraphics[width=0.47\textwidth]{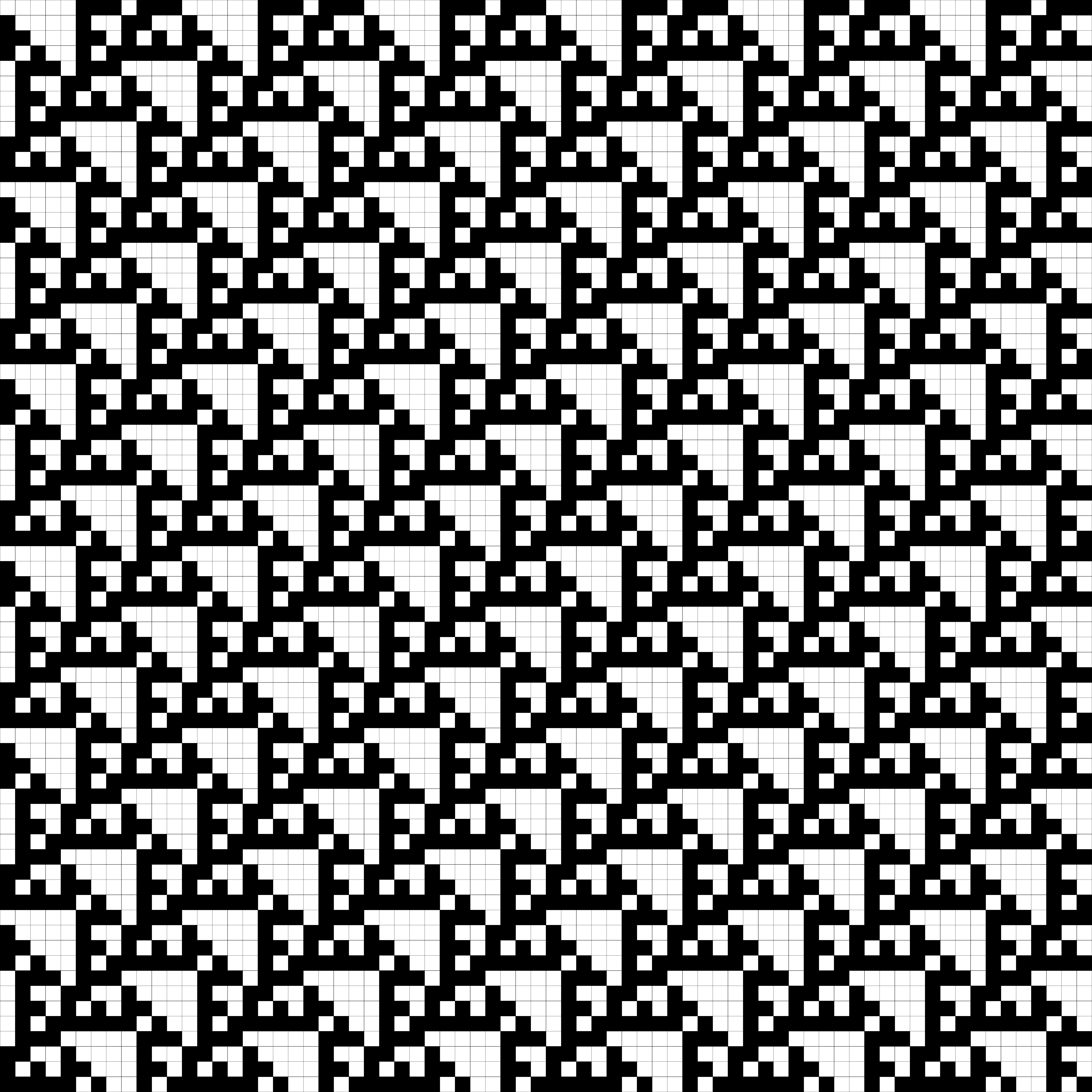} \\*
$\orb{X_{15}^\infty}$ & $\orb{X_{16}^\infty} = \orb{Y_{1}^\infty}$ \\
\hline
\multicolumn{2}{|c|}{} \\*
\multicolumn{2}{|c|}{
\includegraphics[width=0.47\textwidth]{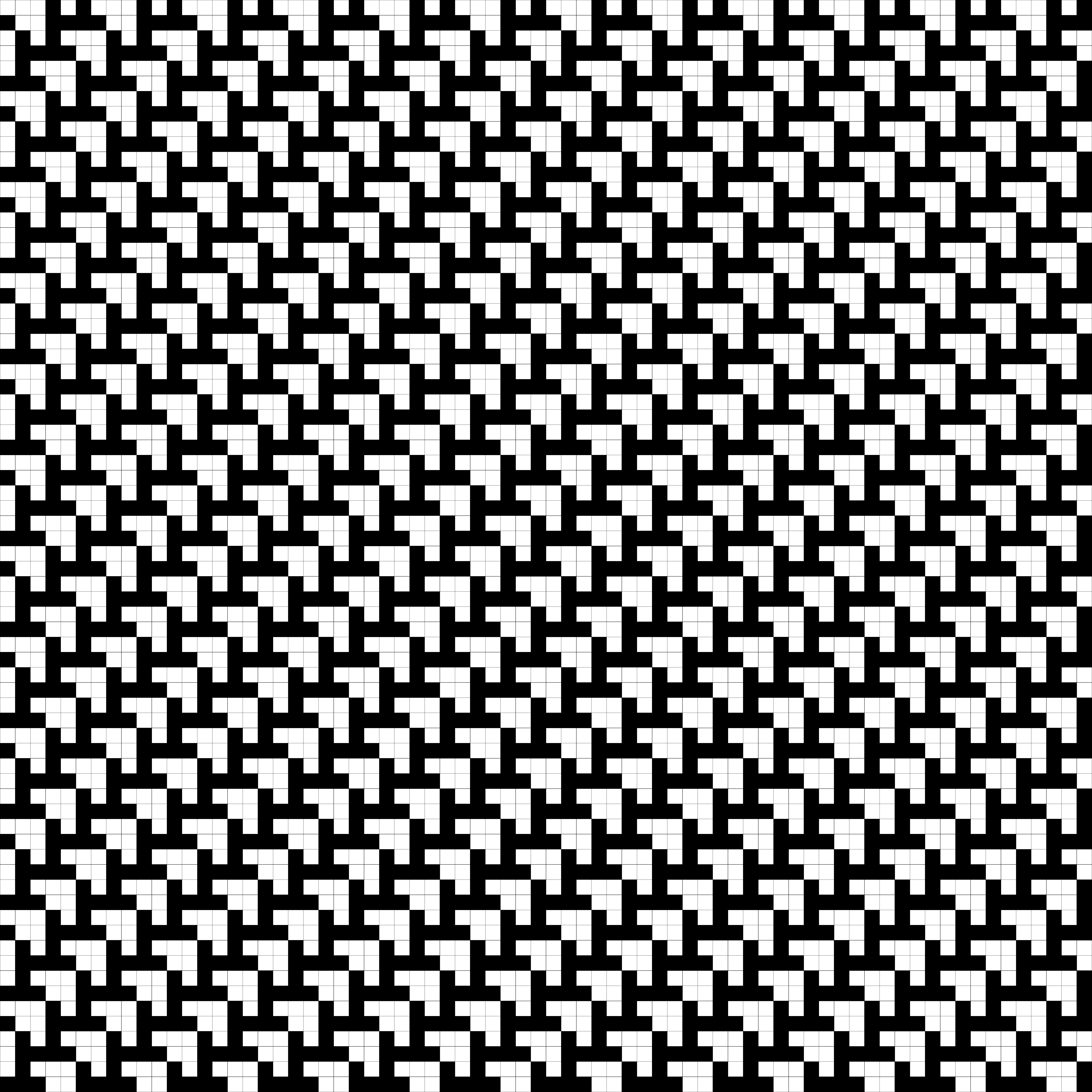}}  \\*
\multicolumn{2}{|c|}{
$\orb{X_{17}^\infty} = \orb{Y_{2}^\infty}$} \\
\hline
\end{longtabu}
}
\end{center}
\end{document}